\numberwithin{equation}{section}
\def\today{\ifcase\month\or Jan\or Febr\or  Mar\or  Apr\or May\or Jun\or  Jul\or Aug\or  Sep\or  Oct\or Nov\or  Dec\or\fi \space\number\day, \number\year}
\newcommand{\CC}{\mathbb C}
\newcommand{\EE}{\mathbb E}
\newcommand{\GG}{\mathbb G}
\newcommand{\MM}{\mathbb M}
\newcommand{\PP}{\mathbb P}
\newcommand{\QQ}{\mathbb Q}
\newcommand{\RR}{\mathbb R}
\newcommand{\ZZ}{\mathbb Z}
\newcommand\langepijl[1]{\buildrel {#1} \over \longrightarrow}
\newcommand{\Sym}{{\mathrm{Sym}}}
\def\legendre@dash#1#2{\hb@xt@#1{%
  \kern-#2\p@
  \cleaders\hbox{\kern.5\p@
    \vrule\@height.2\p@\@depth.2\p@\@width\p@
    \kern.5\p@}\hfil
  \kern-#2\p@
  }}
\def\@legendre#1#2#3#4#5{\mathopen{}\left(
  \sbox\z@{$\genfrac{}{}{0pt}{#1}{#3#4}{#3#5}$}%
  \dimen@=\wd\z@
  \kern-\p@\vcenter{\box0}\kern-\dimen@\vcenter{\legendre@dash\dimen@{#2}}\kern-\p@
  \right)\mathclose{}}
\numberwithin{equation}{section}
\newtheorem{theorem}{Theorem}[section]
\newtheorem{lemma}[theorem]{Lemma}
\newtheorem{proposition}[theorem]{Proposition}
\newtheorem{corollary}[theorem]{Corollary}
\newtheorem{definition-lemma}[theorem]{Definition-Lemma}
\theoremstyle{definition}
\newtheorem{vergelijking}[theorem]{Equation}
\theoremstyle{remark}
\newtheorem{remark}[theorem]{Remark}
\begin{document}

\title[Picard modular forms by means of invariant theory]{Generating Picard modular forms \\
by means of invariant theory}
\author{Fabien Cl\'ery}
\address{Institute of Computational and Experimental 
Research in Mathematics,
121 South Main Street, Providence, RI 02903, USA}
\email{cleryfabien@gmail.com}

\author{Gerard van der Geer}
\address{Korteweg-de Vries Instituut, Universiteit van
Amsterdam, Postbus 94248,
The Netherlands}
\email{g.b.m.vandergeer@uva.nl}

\subjclass{11F46,11F70,14J15}

\maketitle
\hfill{\sl Dedicated to Don Zagier on the occasion of his 70th birthday \quad}

\begin{abstract}
We use the description of the Picard modular surface for discriminant $-3$
as a moduli space of curves of genus $3$ to generate all vector-valued 
Picard modular forms from bi-covariants for the action of ${\rm GL}_2$
on the space of pairs of
binary forms of bidegree $(4,1)$. The universal binary 
forms of degree $4$
and $1$ correspond to a meromorphic modular form of weight $(4,-2)$ 
and a holomorphic Eisenstein series of weight $(1,1)$.
\end{abstract}

%%%%%%%%%%%%%%%%%%%%%%%%%%%%%%%%%%%%%%%%
%%\centerline{\today}%%%%%%
\begin{section}{Introduction}\label{sec-intro}
Some Shimura varieties can be interpreted as moduli spaces of curves
and such an interpretation offers extra ways to study these Shimura
varieties. More precisely, in a number of cases a dense open part
of the Shimura variety is the image of a moduli space of curves under
a morphism of finite degree. Examples are the moduli space of 
principally polarized abelian varieties of dimension $2$ (resp.\  $3$)
where we have the Torelli map $\mathcal{M}_2 \to \mathcal{A}_2$
(resp.\ $\mathcal{M}_3 \to \mathcal{A}_3$)
from the moduli space
of curves of genus $2$ (resp.\ ~$3$).
Igusa \cite{Igusa1967}  used
this to describe the generators for the rings of scalar-valued 
Siegel modular forms of degree $2$ and  later Tsuyumine \cite{Tsuyumine} 
extended this to the case of degree $3$. 
In joint work with Carel Faber \cite{CFG1,CFG2} 
we used the description of 
$\mathcal{M}_2$ as a stack quotient of ${\rm GL}_2$
to extend the work of Igusa 
by describing how invariant theory makes it possible to efficiently 
generate all vector-valued Siegel modular forms (of level~$1$) 
of degree $2$ from one universal vector-valued meromorphic Siegel modular form
$\chi_{6,-2}$ and one scalar-valued holomorphic form $\chi_{10}$. 
Similarly in \cite{CFG3} 
we used the description of an open part of $\mathcal{M}_3$
as a stack quotient of ${\rm GL}_3$ to generate all Siegel and
Teichm\"uller modular forms from a universal 
meromorphic Teichm\"uller modular form $\chi_{4,0,-1}$ of genus $3$
and the form $\chi_9$, a square root of a 
Siegel modular form $\chi_{18}$.
These universal vector-valued modular forms $\chi_{6,-2}$ for genus $2$
and $\chi_{4,0,-1}$ for genus $3$
can be seen as giving the equation
of the universal curve over the moduli space while the scalar-valued
ones $\chi_{10}$ and $\chi_{18}$ are related to the discriminants
of these equations.

It is natural to try to extend this to other Shimura varieties.
In \cite{Shimura1964} Shimura gave a list of arithmetic ball quotients 
that are moduli spaces of curves. This list was extended to a 
complete list by Rohde, see \cite{Rohde,M-O}. 

Here we treat one case of Shimura's list,
a quotient of the $2$-ball that gives the moduli of genus $3$
curves that are triple cyclic covers of the projective line. The
period domain of such curves is a Picard modular surface 
associated to the group of unitary similitudes ${\rm GU}(2,1,{\QQ}(\sqrt{-3}))$. These periods were first studied by Picard in  the late
19th century in a series of papers \cite{P1,P2,P3}.

We show how all vector-valued modular forms on the moduli space 
in question can be generated by invariant theory from 
two universal modular forms, one meromorphic form $\chi_{4,-2}$ of 
weight $(4,-2)$, and a holomorphic Eisenstein series $E_{1,1}$
of weight $(1,1)$. Multiplication of $\chi_{4,-2}$ 
by the scalar-valued modular form $\zeta$,
related to the discriminant, makes $\chi_{4,-2}$ holomorphic. 
These three forms
are Teichm\"uller modular forms, 
but can be viewed as Picard modular
forms on an appropriate congruence subgroup. 
The two vector-valued forms $\chi_{4,-2}$ and $E_{1,1}$ 
can be interpreted as the
quartic and the linear term $f_4$ and $f_1$ in the equation 
of the universal canonical curve over the moduli space
$$
y^3f_1=f_4 \, .
$$

Like in the cases of Siegel modular forms of degree $2$ and $3$, 
the  interpretation of our moduli space as a stack quotient
enables the use of invariant theory. 
This moduli space is a stack quotient of a twisted version of 
the action
of ${\rm GL}_2$ on $V_4\times V_1$, where $V_1$ is the standard 
representation of ${\rm GL}_2$ and $V_4={\rm Sym}^4(V_1)$.
The invariant theory used is that of covariants (or more
precisely, bi-covariants) for this action.
The generators of the ring of bi-covariants are known classically.
The construction of modular forms is realized by substituting 
the coordinates of the basic forms $\chi_{4,-2}$ and $E_{1,1}$
in the covariants. In general a covariant yields a meromorphic
modular form with possible poles only along the curve $T_1$
where the scalar-valued form $\zeta$ vanishes. This curve $T_1$ is
the locus where
the Jacobian of our genus $3$ curve is a product of 
an abelian surface and a fixed elliptic curve with multiplication
by third roots of unity.

In order to apply this effectively we need to construct explicitly
Fourier-Jacobi expansions of the generating modular forms
$\zeta$, $\chi_{4,-2}$ and $E_{1,1}$. We use gradients of
theta functions to construct these basic forms. 

To check
holomorphicity of the modular forms obtained from covariants
we need also the Taylor expansions of these generating forms
along the modular curve $T_1$ on our Picard
modular surface.

As an application we show how to construct the generators of
rings of scalar-valued modular forms and of modules of
vector-valued modular forms from invariants and covariants.
In particular, we determine generators of modules of vector-valued
Picard modular forms of weight $(4,k)$.

As a possible further application we mention
that the description of modular forms by covariants should allow
a description and construction of these Picard 
modular forms in positive characteristic. 

It is a great pleasure to dedicate this paper to Don Zagier
who through his work and in his contacts with us has been a source
of inspiration for both of us.
\end{section}
\section*{Acknowledgements}
The first author was supported by Simons Foundation Award 
546235 at the Institute for Computational and
Experimental Research in Mathematics at Brown University.
He thanks Loughborough University for hospitality during
the preparation of this paper.
The second author thanks the Mittag-Leffler Institute for
excellent working conditions during the final preparations
of this paper. He also thanks YMSC of Tsinghua University
and the University of Luxembourg for support.

\medskip

%%%%%%%%%%%%%%%%%%%%%%%%%%%%%%%%%%%%%%%%5
\tableofcontents
%%%%%%%%%%%%%%%%%%%%%%%%%%%%%%%%%%%%%%%%5
\begin{section}{Picard modular forms}\label{sectionPMF}
We briefly recall the notion of Picard modular forms on the
$2$-ball. We refer to \cite{Fi,C-vdG} for more details.
Let $F={\QQ}(\sqrt{-3})$ with ring of integers $O_F={\ZZ}[\rho]$
for a primitive third root of unity $\rho$ 
and units $O_F^{\times}=\mu_6$. 
We consider the 
non-degenerate Hermitian form $h$ of signature $(2,1)$ 
on the $F$-vector space $Z=F^3$ given by
$$
z_1z_2'+z_1'z_2+z_3z_3'\, ,
$$
where the prime indicates the Galois conjugate. It defines
an algebraic group $G$
over ${\QQ}$ consisting of the similitudes of $h$
$$
G({\QQ})=\{ g \in {\rm GL}(3,F): h(gz)=\eta(g)h(z)\}
$$
with multiplyer homomorphism $\eta: G \to {\GG}_m$. This is a
group of type ${\rm GU}(2,1,F)$. 
 We let $G^0=\ker(\eta)$. The two arithmetic groups of interest are
$$
\Gamma=G^0({\ZZ}), \quad \Gamma_1=G^0({\ZZ})\cap \ker \det \, .
$$ 
After choosing an embedding $F\hookrightarrow {\CC}$ we can identify 
$F\otimes_{\QQ}{\RR}$ with ${\CC}$ and $G({\RR})$ acts on
the complex vector space $Z_{\RR}=Z\otimes_{\QQ}{\RR}$ via the
standard representation. An element $g$ of $G^{+}({\RR})=
\{ g \in G({\RR}): \eta(g)>0\}$ preserves the set of
negative complex lines
$$
\mathfrak{B}=\{ L: L\subset Z \otimes_{\QQ}{\RR}, \dim_{\CC}L=1, h_{|L} < 0\} \, .
$$
The action can be given explicitly by first identifying $\mathfrak{B}$
via $u=z_3/z_2$ and $v=z_1/z_2$ with a complex $2$-ball
$$
\mathfrak{B}=\{ (u,v) \in {\CC}^2: v+\bar{v}+u\bar{u} <0 \}\, .
$$
Then an element $g=(g_{ij})$ acts by 
$$
g\cdot (u,v)=
\left(\frac{g_{31}v+g_{32}+g_{33}u}{g_{21}v+g_{22}+g_{23}u},\frac{g_{11}v+g_{12}+g_{13}u}{g_{21}v+g_{22}+g_{23}u}\right) \, .
$$
The quotient $X_{\Gamma}=\Gamma\backslash \mathfrak{B}$ is called
a Picard modular surface.
It is not compact, but can be compactified by adding one cusp.
It was studied in detail by Holzapfel and Feustel, see \cite{H,Fe}. 
The two congruence subgroups
$$
\Gamma[\sqrt{-3}]=
\{ \gamma \in \Gamma : \gamma \equiv 1_3 \, (\bmod \sqrt{-3})\}
\quad {\rm and} \quad 
\Gamma_1[\sqrt{-3}]=\Gamma[\sqrt{-3}] \cap\Gamma_1
$$
will also play a role here.
For later use we record the following lemma, see \cite[p.\ 329] {Shiga}.

\begin{lemma}\label{generators}
The following six elements generate the group $\Gamma[\sqrt{-3}]$:
$g_0=\rho \, 1_3$ and
$$
g_1=\left(\begin{smallmatrix} 1 & 0 & 0 \\ 0 & 1 & 0 \\ 0 & 0 & \rho\end{smallmatrix}\right), 
g_2=\left(\begin{smallmatrix} 1 & 0 & 0 \\ \sqrt{-3} & 1 & 0 \\ 0 & 0 & 1\end{smallmatrix}\right), 
g_3=\left(\begin{smallmatrix} 1 & 0 & 0 \\ \rho-1 & 1 & \rho-1 \\ 1-\rho^2 & 0 & 1\end{smallmatrix}\right), 
g_4=\left(\begin{smallmatrix} 1 & \sqrt{-3}  & 0 \\ 0 & 1 & 0 \\ 0 & 0 & 1\end{smallmatrix}\right),
g_5=\left(\begin{smallmatrix} 1 & \rho-1  & \rho-1 \\ 0 & 1 & 0 \\ 0 & 1-\rho^2 & 1\end{smallmatrix}\right).
$$
\end{lemma}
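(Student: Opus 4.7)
The plan is to split the argument into a containment verification and a generation argument, and to follow Shiga's approach for the latter.

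\textbf{Containment.} For each $g_i$ I verify by direct matrix multiplication that $g_i^{*} H g_i = H$, where $H$ is the Gram matrix of the Hermitian form $h$ of Section \ref{sectionPMF}; this certifies $g_i \in G^0(\RR) \cap \GL(3, O_F)$, hence $g_i \in \Gamma$. The congruence $g_i \equiv 1_3 \pmod{\sqrt{-3}}$ follows from the fact that every off-diagonal entry of every $g_i$ is one of $0$, $\sqrt{-3}$, $\rho - 1$ or $1 - \rho^2$: the prime ideal $(\sqrt{-3})$ in $O_F$ equals $(1 - \rho)$ since both have norm $3$ and $3$ is ramified in $O_F$, and $(1 - \rho^2) = (1-\rho)(1+\rho)$ is likewise a multiple of $\sqrt{-3}$. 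Each diagonal entry is $1$ or $\rho$, and $\rho - 1$ is again divisible by $\sqrt{-3}$. Thus $g_0, \dots, g_5 \in \Gamma[\sqrt{-3}]$.

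\textbf{Generation.} My plan is to combine a presentation of the full similitude group $\Gamma$, as supplied by Feustel \cite{Fe} and Holzapfel \cite{H}, with a Reidemeister--Schreier rewriting on the finite-index subgroup $\Gamma[\sqrt{-3}]$. The quotient $\Gamma/\Gamma[\sqrt{-3}]$ injects into the finite group of similitudes of the reduction of $h$ modulo $\sqrt{-3}$ over $\FF_3$, so one chooses a Schreier transversal for this quotient and converts the generators of $\Gamma$ into a finite generating set for $\Gamma[\sqrt{-3}]$. Each such generator is then rewritten as a word in $g_0, \dots, g_5$, concluding the proof.

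An alternative, more geometric route is to construct an explicit fundamental polyhedron $\mathcal{F}$ for the action of $\Gamma[\sqrt{-3}]$ on the $2$-ball $\mathfrak{B}$ and to invoke Poincar\'e's polyhedron theorem: the side-pairing maps of $\mathcal{F}$ form a complete set of generators. One then identifies the geometric side-pairings with $g_1, \dots, g_5$ and adjoins the central element $g_0 = \rho\, 1_3$ to account for the stabilizer of an interior fixed point. This is the route followed in \cite{Shiga}.

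The main obstacle is the compression step: Reidemeister--Schreier typically produces a redundant generating set and one must express the excess generators as words in the six listed matrices; equivalently, in the geometric approach one has to describe the combinatorics of $\partial \mathcal{F}$ accurately and check that exactly these six transformations realize all side identifications. Both routes are in practice computer-assisted, and for the detailed reduction we appeal to Shiga's calculation in \cite{Shiga}.
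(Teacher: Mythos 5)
The paper gives no proof of this lemma at all --- it simply records the statement with a reference to \cite[p.~329]{Shiga} --- and your proposal, after a correct but routine verification that the six matrices lie in $\Gamma[\sqrt{-3}]$ (the key facts being that $(\rho-1)$, $(1-\rho^2)$ and $(\sqrt{-3})$ all coincide as the ramified prime of norm $3$ in $O_F$), likewise defers the substantive generation step to Shiga's computation. So you are taking essentially the same route as the paper: the Reidemeister--Schreier and Poincar\'e-polyhedron strategies you sketch are both plausible, but neither is carried out, which matches the paper's own level of detail.
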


The quotient $X_{\Gamma_1[\sqrt{-3}]}=\Gamma_1[\sqrt{-3}]\backslash \mathfrak{B}$ can be compactified by 
adding four cusps represented by
$[1:0:0]$, $[0:1:0]$ $[\rho:1:1]$ and $[\rho:1:-1]$. 
We have an isomorphism
$$
\Gamma/\Gamma_1[\sqrt{-3}] \cong \mathfrak{S}_4 \times \mu_6,
\quad g \mapsto (\sigma(g),\det(g))\, ,
$$
where $\mathfrak{S}_4$ is the symmetric group and
 $\sigma(g)$ is the permutation of the four cusps.
The $\mu_6$--part is generated by $-1_3$ and $g_1=\text{diag}(1,1,\rho)$,
while the $\mathfrak{S}_4$--part is generated by:
$$
r_1=\left(\begin{matrix}0&-1&0\\-1&0&0\\0&0&-1\end{matrix}\right)\, ,
\quad
r_2=\left(\begin{matrix}-1&0&0\\0&-1&0\\0&0&1\end{matrix}\right)
\quad
{\rm{and}}
\quad
r_3=\left(\begin{matrix}1&\rho^2&1\\0&1&0\\0&-1&1\end{matrix}\right).
$$
Note that $\Gamma_1 / \Gamma_1[\sqrt{-3}] \simeq \mathfrak{S}_4$ and
the three elements $r_1,r_2$ and $r_3$ correspond to the permutations
$(12), (34)$ and $(234)$ in $\mathfrak{S}_4$.

The action of $G^{+}({\RR})$ on $\mathfrak{B}$ 
defines two factors of automorphy:
$$
j_1(g,u,v)=g_{21}v+g_{22}+g_{23}u\, ,\quad
j_2(g,u,v)=
\det(g)^{-1}
\left(
\begin{matrix}
G_{32}u+G_{33} & G_{32}v+G_{31}\\
G_{12}u+G_{13} & G_{12}v+G_{11}
\end{matrix}
\right) \, ,
$$
where $G_{ij}$ denotes the minor of $g_{ij}$.
We have
$$
\det(j_2(g,u,v))=j_1(g,u,v)/\det(g) \, .
$$
The factor of automorphy $j_2$ agrees with 
the canonical  factor  of automorphy as defined by Satake,
see \cite[Chapter II.5]{Satake}; see also \cite{Shimura1978}.

For a pair $(j,k)$ of integers and $g\in G^+({\RR})$,  we define
a slash operator on functions $f: \mathfrak{B} \to \Sym^j(\CC^2)$,
\[
(f\vert_{j,k} g)(u,v)=
j_1(g,u,v)^{-k}\Sym^{j}(j_2(g,u,v)^{-1})f(g\cdot(u,v)).
\]
For a discrete subgroup $\Gamma'$ of $G^+({\RR})\cap \ker \eta$ 
and a character $\chi$ of $\Gamma'$ of finite order, we
define the space of modular forms of weight $(j,k)$ 
and character $\chi$ on $\Gamma'$ as
\[
M_{j,k}(\Gamma',\chi)=
\left\{
f:B \to \Sym^j(\CC^2)\, | \, f \, \text{holomorphic}, 
f\vert_{j,k} g=\chi(g)\, f\,  \text{for any}\,  g\in \Gamma'
\right\}.
\]
We denote by $S_{j,k}(\Gamma',\chi)$ the subspace of cusp forms 
of $M_{j,k}(\Gamma',\chi)$.
For $j=0$, that is, for scalar-valued forms, 
we shorten these notations by just 
writing $M_{k}(\Gamma',\chi)$ and $S_{k}(\Gamma',\chi)$,
and $M_{k}(\Gamma')$ and $S_k(\Gamma')$ if $\chi$ is trivial.
We have the graded ring of modular forms on $\Gamma'$  with
$$
M(\Gamma')=\bigoplus_{k\geqslant 0}M_k(\Gamma')\, .
$$

We apply this to the case where
$\Gamma'$ equals to one of the groups
$\Gamma, \Gamma_1, \Gamma[\sqrt{-3}]$ and $\Gamma_1[\sqrt{-3}]$.

\begin{remark} 
The isomorphism $\Gamma[\sqrt{-3}]/\Gamma_1[\sqrt{-3}] \cong 
\mu_3$ via $g \mapsto \det(g)$
gives a decomposition
$$
M_{j,k}(\Gamma_1[\sqrt{-3}])=
\oplus_{l=0}^2 M_{j,k}(\Gamma[\sqrt{-3}], {\det}^{l})\, ,
$$
and similarly,  $\Gamma/\Gamma_1\cong \mu_6$ 
via $g\mapsto \det(g)$ gives
$$
M_{j,k}(\Gamma_1)= \oplus_{l=0}^5 
M_{j,k}(\Gamma, {\det}^{l})\, .
$$
However, since $-1_3 \in \Gamma$
acts by $(-1)^{j+k}$ on $M_{j,k}(\Gamma_1)$ here we may restrict
$l$ by $j+k\equiv l (\bmod 2)$, that is, $l\in \{0,2,4\}$ or $l\in \{1,3,5\}$.  
But note that if we view a modular form on $\Gamma_1$ 
as a modular form on $\Gamma_1[\sqrt{-3}]$ the notation of the
character may change since $\Gamma/\Gamma_1 \cong \mu_6$, but 
$\Gamma[\sqrt{-3}]/\Gamma_1[\sqrt{-3}]\cong \mu_3$.
\end{remark}

We have two order $2$ characters on $\Gamma$. The first one is
${\det}^3$, while the second one, denoted $\epsilon$, comes from 
the isomorphism 
$\Gamma/\Gamma_1[\sqrt{-3}]\cong \mathfrak{S}_4 \times \mu_6$ 
and the map $\mathfrak{S}_4 \times \mu_6 \to \{ \pm 1 \}$
given by $(\sigma,z) \mapsto {\rm sgn}(\sigma)$ with ${\rm sgn}$
the sign character on $\mathfrak{S}_4$. 

The isomorphism $\Gamma_1/\Gamma_1[\sqrt{-3}] \cong \mathfrak{S}_4$
makes $M_{j,k}(\Gamma_1[\sqrt{-3}])$ into 
a representation of $\mathfrak{S}_4$ and we have
$$
M_{j,k}(\Gamma_1,\epsilon)=M_{j,k}(\Gamma_1[\sqrt{-3}])^{s[1^4]},
$$
with $s[1^4]$ the alternating 
$\mathfrak{S}_4$-representation and with $M_{j,k}(\Gamma_1[\sqrt{-3}])^{s[1^4]}$
denoting the subspace of $M_{j,k}(\Gamma_1[\sqrt{-3}])$ where $\mathfrak{S}_4$
acts via the alternating character. 

By \cite{H} the Baily-Borel compactification $X^*_{\Gamma[\sqrt{-3}]}$
of $X_{\Gamma[\sqrt{-3}]}= \Gamma[\sqrt{-3}]\backslash \mathfrak{B}$
can be identified with ${\PP}^2 \subset {\PP}^3$ given by the
hyperplane $x_1+x_2+x_3+x_4=0$ with the action of 
$\Gamma/\Gamma[\sqrt{-3}] \cong \mathfrak{S}_4 \times \mu_2$
given by $x_i \mapsto {\rm sgn}(\sigma)\, x_{\sigma(i)}$ and $\mu_2$ 
acting trivially.
Moreover, $X^*_{\Gamma_1[\sqrt{-3}]}$ can be identified
with the $3$-fold cover given by $\zeta^3= 
\prod_{1 \leq i < j \leq 4} (x_i-x_j)$.

The factor of automorphy $j_1$
corresponds to an orbifold line bundle
$L$ on $\Gamma\backslash \mathfrak{B}$
and the factor $j_2$ to a rank $2$ orbifold vector bundle $U$.
If we define $j_3=\det(g)$ to be the third factor of automorphy we have
$ \det(j_2)=j_1/j_3$.
This factor $j_3$ corresponds to $R={\det}(U)^{-1}\otimes L$, see \cite{B-vdG}.
Note that $R$ is a torsion line bundle.

When we speak of weight $(j,k,l)$ we refer to the factor of automorphy
$j_1^k\, {\rm Sym}^j(j_2)\, j_3^l$.

The $M(\Gamma)$-module
$$
{\MM}(\Gamma)=\bigoplus_{j,k \in {\ZZ}_{\geq 0}} M_{j,k}(\Gamma)
$$
can be made into a ring; indeed a modular form on $\Gamma$
of weight $(j,k,l)$
is a section of ${\rm Sym}^j(U) \otimes L^k \otimes R^{l}$
on $\Gamma \backslash \mathfrak{B}$ and
the canonical projection
${\rm Sym}^a(U) \otimes {\rm Sym}^b(U) \to  {\rm Sym}^{a+b}(U)$
and the usual multiplication of line bundles
determines the ring structure.
Similarly, we have a ring structure
on the $M(\Gamma_1)$-module ${\MM}(\Gamma_1)=\oplus M_{j,k}(\Gamma_1) $.

We now briefly summarize 
what is known about Picard modular forms on the groups
in question. Shiga studied in the sixties 
Picard modular forms using theta functions
in \cite{Shiga}. In the eighties Feustel and Holzapfel determined 
a few rings of scalar-valued modular forms, see below.
In the nineties Finis constructed a number of scalar-valued
Hecke eigenforms of small
weight and determined Hecke eigenvalues in \cite{Fi}.
Shintani discussed the notion of vector-valued modular forms
in \cite{Shintani}.

Bergstr\"om and
one of us studied in \cite{B-vdG} the cohomology of local systems on the arithmetic
quotient $\Gamma_1[\sqrt{-3}]\backslash \mathfrak{B}$ and gave dimension
formulas for the spaces $S_{j,k,l}(\Gamma[\sqrt{-3}])$. 
The interpretation of the Picard modular surface as a 
moduli of curves was used there to determine
experimentally by counting points over finite fields Hecke eigenvalues
of Picard modular forms on
$\Gamma_1[\sqrt{-3}]$.
Motivated by the early 
experimental results of \cite{B-vdG} we constructed
in \cite{C-vdG} a number of vector-valued modular forms
and determined the structure of a few modules of vector-valued modular forms.

\bigskip
%\end{section}
%%%%%%%%%%%%%%%%%%%%%%%%%%%%%55
%\begin{section}{Some rings of scalar-valued modular forms} 
We finish this section by recalling 
some results of Feustel and Holzapfel, \cite{Fe,H} 
on the structure of some graded
rings of scalar-valued modular forms. 
There exist modular forms  $\varphi_i \in M_3(\Gamma[\sqrt{-3}])$ for $i=0,1,2$,
and $\zeta \in S_6(\Gamma[\sqrt{-3}],\det{})$ such that
$$
M(\Gamma[\sqrt{-3}])=\CC[\varphi_0,\varphi_1,\varphi_2],
\quad \text{\rm and} \quad
M(\Gamma_1[\sqrt{-3}])=\CC[\varphi_0,\varphi_1,\varphi_2,\zeta]/(R),
$$
where $(R)$ is the ideal generated by the relation
$$
\zeta^3=-\frac{\rho}{3^7\sqrt{-3}}\varphi_0\varphi_1\varphi_2(\varphi_1-\varphi_0)(\varphi_2-\varphi_0)(\varphi_2-\varphi_1)\, .
\eqno(1)$$
The constant $-\rho/3^7\sqrt{-3}$ is due to our normalizations, see later.
The $\varphi_i$ are related with the coordinates $x_i$ of the Baily-Borel
compactification $X_{\Gamma[\sqrt{-3}]}^{*}$ via
$$
x_1=\varphi_0+\varphi_1+\varphi_2, \quad
x_2=-3\, \varphi_0+\varphi_1+\varphi_2, \quad
x_3=\varphi_0-3\, \varphi_1+\varphi_2, \quad
x_4=\varphi_0+\varphi_1-3\, \varphi_2, 
$$
and the action of $\mathfrak{S}_4$ by 
$x_i \mapsto {\rm sgn}(\sigma)x_{\sigma(i)}$
makes  $M_3(\Gamma[\sqrt{-3}])$ into the 
$\mathfrak{S}_4$-representation $s[2,1^2]$
corresponding to the partition $(2,1,1)$ of $4$.

The form $\zeta \in S_6(\Gamma[\sqrt{-3}],{\det})$ is 
$\mathfrak{S}_4$-anti-invariant. We thus can view $\zeta$ as an
element of $S_6(\Gamma_1,\epsilon)$.

One defines Eisenstein series $E_i$ of weight $i$ 
on the group $\Gamma$ or a smaller group by
\begin{align*}
E_6&=\varphi_0^2+\varphi_1^2+\varphi_2^2-\frac{2}{3}(\varphi_0\varphi_1+\varphi_0\varphi_2+\varphi_1\varphi_2)\in M_6(\Gamma),\\
E_9&=(-\varphi_0+\varphi_1+\varphi_2)(\varphi_0-\varphi_1+\varphi_2)(\varphi_0+\varphi_1-\varphi_2)\in M_9(\Gamma[\sqrt{-3}]) 
\cap M_9(\Gamma_1,\epsilon),\\
E_{12}&=-\frac{1}{3}(\varphi_0+\varphi_1+\varphi_2)(-3\varphi_0+\varphi_1+\varphi_2)(\varphi_0-3\varphi_1+\varphi_2)(\varphi_0+\varphi_1-3\varphi_2)\in M_{12}(\Gamma).
\end{align*}
Then we can describe the rings of modular forms on $\Gamma$ and $\Gamma_1$: 
$$M(\Gamma)=\CC[E_6,E_{12},E_9^2] \, ,
\quad \text{\rm and} \quad
M(\Gamma_1)=\CC[E_6,E_{12},E_9^2, \zeta E_9, \zeta^2]/(R_1),
$$
with the ideal $(R_1)$ generated by the relation 
$(E_9\, \zeta)^2=E_9^2 \zeta^2$ and
$$
\rho \, 2^{16} 3^{12} \zeta^6=
9\, E_6^4E_{12} -8 \, E_6^3 E_{12}^2+6\, E_6^2E^2_{12} 
-24\,  E_6 E_9^2 E_{12} 
+16\, E_9^4+E_{12}^3\, . \eqno(2)
$$
We also have
$$
M(\Gamma_1,\epsilon)={\CC}[E_6,E_9,E_{12},\zeta^2]/(R')
$$
with $(R')$ generated by the relation (2).

\end{section}
%%%%%%%%%%%%%%%%%%%%%%%%%%%%%%%%%%%%%%%%5
%%%%%%%%%%%%%%%%%%%%%%%%%%%%%%%%%%%%%%%%5
\begin{section}{A modular embedding}\label{embedding}
The arithmetic quotient $X_{\Gamma}=\Gamma\backslash \mathfrak{B}$
parametrizes principally polarized 
abelian threefolds with multiplication by $O_F$. Therefore there is a
morphism $X_{\Gamma} \to \mathcal{A}_3({\CC})$. 
We now describe the corresponding modular embedding
$\Gamma\backslash \mathfrak{B} \to 
{\rm Sp}(6,{\ZZ})\backslash \mathfrak{H}_3$ with $\mathfrak{H}_3$ the
Siegel upper half space of degree $3$
$$
\mathfrak{H}_3=
\{\tau \in {\rm Mat}(3,{\CC}): \tau^t=\tau, {\rm Im}(\tau)>0\}\, .
$$
Such modular embeddings were considered by Picard, Shiga and Holzapfel, see
\cite{P1,Shiga,H2}.

The lattice $O_F^3$ with Hermitian form
$h=z_1z_2'+z_2z_1'+z_3z_3'$
determines an alternating form $(2/\sqrt{3}){\rm Im}(h)$
and by taking as
${\ZZ}$-basis of this lattice
$$e_1=(\rho^2,0,0), \, e_2=(0,\rho^2,0), \, e_3=(0,0,\rho^2), \quad
f_1= (0,\rho,0), \, f_2=(\rho,0,0), \, f_3= (0,0,\rho)
$$
we can identify it with the standard symplectic lattice generated by
$e_1,e_2,e_3,f_1,f_2,f_3$ 
with $\langle e_i,e_j\rangle=0$, $\langle f_i,f_j\rangle=0$,
$\langle e_i,f_j\rangle=\delta_{ij}$. Here $\langle \, , \, \rangle$
denotes the alternating form that is the imaginary part of the Hermitian form.
 This defines an embedding
$\Gamma \to {\rm Sp}(6,{\ZZ})$.

If we take instead the symplectic basis
$(e_1,e_3,-f_2,f_1,f_3,e_2)$
we get the following modular embedding
$$
\iota:\mathfrak{B} \to \mathfrak{H}_3, \quad 
\sigma:\Gamma \to {\rm Sp}(6,{\ZZ})
$$ 
given by
\[
\iota(u,v)=
 \left(\begin{matrix} \frac{u^2+2\rho^2v}{1-\rho} & \rho^2u & \frac{\rho u^2-\rho^2v}{1-\rho}  \\
\rho^2u  & -\rho^2 & u \\
\frac{\rho u^2-\rho^2v}{1-\rho} & u & \frac{\rho^2u^2+2\rho^2v}{1-\rho}
 \end{matrix}\right)
\]
and for $g=(a_{ij}+\rho \, b_{ij})$
%\begin{footnotesize}
\[
\sigma(g)=
\left(\begin{matrix}
a_{11}-b_{11} & a_{13}-b_{13}  & -b_{11} & b_{12} & b_{13} & a_{12}-b_{12}  \\
a_{31}-b_{31} & a_{33}-b_{33}  & -b_{31} & b_{32} & b_{33} & a_{32}-b_{32} \\
b_{11} & b_{13}  & a_{11} & -a_{12} & -a_{13} & b_{12} \\
-b_{21} & -b_{23}  & -a_{21} & a_{22} & a_{23} & -b_{22} \\
-b_{31} & -b_{33}  & -a_{31} & a_{32} & a_{33} & -b_{32} \\
a_{21}-b_{21} & a_{23}-b_{23}  & -b_{21} & b_{22} & b_{23} & a_{22}-b_{22}
\end{matrix}\right)
\]
%\end{footnotesize}
The pullback of the stabilizer in ${\rm Sp}(6,{\ZZ})$ of $\iota(\mathfrak{B})$
is the group $\Gamma$. This can be derived from the Torelli theorem applied to
curves of genus $3$ that are triple cyclic covers of ${\PP}^1$.

\smallskip

Let ${\EE}$ be the Hodge bundle on $\mathcal{A}_3({\CC})$, that is,
the cotangent bundle of the universal abelian threefold along the zero section.
Via the map $\iota$ we can pull back ${\EE}$ to $\Gamma\backslash \mathfrak{B}$.

We wish to express the pull back of the Hodge bundle in terms of the
automorphic bundles $L$, $U$ and $R$ 
associated to  the factors of automorphy $j_1$, 
$j_2$ and $j_3$. 

\begin{lemma}\label{pullbacksplits}
The pullback of the Hodge bundle ${\EE}$ over $\mathcal{A}_3({\CC})$ to $\Gamma_1\backslash \mathfrak{B}$ 
is isomorphic to $U\oplus L$. The pullback of $\det({\EE})$ is $L^2\otimes R^{-1}$.
\end{lemma}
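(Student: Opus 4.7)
The plan is to pull back the standard factor of automorphy of the Hodge bundle via the explicit modular embedding $(\iota,\sigma)$ and then to split it using the $O_F$-multiplication on the universal abelian threefold. Recall that $\EE$ on $\mathcal{A}_3(\CC)$ is defined by the factor of automorphy $\gamma=\bigl(\begin{smallmatrix}A&B\\C&D\end{smallmatrix}\bigr)\mapsto C\tau+D$ on $\mathfrak{H}_3$. Composing with $\sigma$ and restricting along $\iota$ yields a rank-$3$ automorphy factor $\gamma\mapsto C\,\iota(u,v)+D$ for $\gamma\in\Gamma_1$, where $C$ and $D$ are the lower $3\times 3$ blocks of $\sigma(\gamma)$ read off from the explicit matrix given above.

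Because $X_{\Gamma}$ parametrizes principally polarized abelian threefolds with $O_F$-multiplication, the central element $g_0=\rho\cdot 1_3$ from Lemma~\ref{generators} acts on each fiber of $\iota^*\EE$ through its CM action on the universal abelian variety. Since $O_F\otimes_{\ZZ}\CC\cong \CC\oplus\CC$ splits according to the two complex embeddings of $\rho$, the bundle $\iota^*\EE$ decomposes as a direct sum of two $\rho$-eigenbundles, and the signature $(2,1)$ of $h$ forces these summands to have ranks $2$ and $1$, respectively. They are $\Gamma_1$-equivariant because every element of $\Gamma_1$ commutes with the scalar $g_0$.

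It then remains to match the rank-$2$ summand with $U$ and the rank-$1$ summand with $L$. Choosing bases adapted to the eigendecomposition, one computes that $C\,\iota(u,v)+D$ becomes block-diagonal, with a $2\times 2$ block equal to $j_2(\gamma,u,v)$ and a $1\times 1$ block equal to $j_1(\gamma,u,v)$; this is essentially the statement, already noted in Section~\ref{sectionPMF}, that $j_2$ coincides with Satake's canonical factor of automorphy on $\mathfrak{B}$. The main technical obstacle is precisely this direct matrix verification using the explicit $\iota$ and $\sigma$, which by multiplicativity of factors of automorphy reduces to checking the assertion on the six generators of Lemma~\ref{generators}. Once the splitting $\iota^*\EE\cong U\oplus L$ is established, the determinant statement is immediate: $\det(\EE)=\det(U)\otimes L$ has factor of automorphy $\det(j_2)\cdot j_1=(j_1/j_3)\cdot j_1=j_1^2/j_3$, which is the factor of automorphy of $L^{2}\otimes R^{-1}$ by the definitions of $L$ and $R$.
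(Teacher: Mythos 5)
Your overall strategy coincides with the paper's: both pull back the automorphy factor $c\tau+d$ of the Hodge bundle along $(\iota,\sigma)$ and identify it, after a base change, with $j_2\oplus j_1$. Your a priori splitting of $\iota^*{\EE}$ into rank-$2$ and rank-$1$ eigenbundles for the central CM element $g_0=\rho\,1_3$ is a pleasant conceptual addition that the paper does not spell out; the paper instead obtains the splitting by directly computing $c\,\iota(u,v)+d$ for diagonal $g={\rm diag}(g_1,g_2,g_3)$, reading off the eigenvalues $g_2,\bar g_3,\bar g_2$ and matching them with $j_1=g_2$ and $j_2={\rm diag}(\bar g_3,\bar g_2)$ via the relations $g_1\bar g_2=g_3\bar g_3=1$, and then invoking diagonalizability to pass to general elements.

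There is, however, a concrete flaw in your reduction step. The six elements of Lemma~\ref{generators} generate $\Gamma[\sqrt{-3}]$, not $\Gamma_1$ (nor $\Gamma$); indeed $g_1={\rm diag}(1,1,\rho)$ has determinant $\rho$ and does not even lie in $\Gamma_1$, while $\Gamma_1/\Gamma_1[\sqrt{-3}]\cong\mathfrak{S}_4$ is nontrivial. Verifying the cocycle identity on those six generators would therefore establish the splitting only over $\Gamma[\sqrt{-3}]\backslash\mathfrak{B}$ (hence over $\Gamma_1[\sqrt{-3}]\backslash\mathfrak{B}$), not over $\Gamma_1\backslash\mathfrak{B}$ as the lemma asserts; you would need in addition the coset representatives $r_1,r_2,r_3$ of $\mathfrak{S}_4$, or a genuine generating set of $\Gamma_1$. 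Beyond that, the identification of the rank-$2$ and rank-$1$ summands with $U$ and $L$ --- which you correctly flag as the main technical obstacle --- is precisely the content of the lemma, and your write-up defers it rather than carrying it out, whereas the paper performs the computation (for diagonal matrices). The determinant statement is handled identically in both: $\det(j_2)\,j_1=(j_1/j_3)\,j_1=j_1^2/j_3$ gives $L^2\otimes R^{-1}$.
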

\begin{proof}
The second statement follows from the first when one uses 
$\det(U)=L\otimes R^{-1}$. In order to prove the first one,
we observe that the Hodge bundle corresponds to the factor of
automorphy $(c\tau +d)$ for ${\rm Sp}(6,{\RR})$ acting on $\mathfrak{H}_3$. 
With $\tau= \iota(u,v)$ and $\sigma(g)=(a,b;c,d)$ 
we find for diagonal matrices 
$g={\rm diag}(g_1,g_2,g_3)$ with $g_i=a_i+\rho b_i$ that
$c\,  \iota(u,v)+d$ equals
$$
\left( \begin{matrix} a_2 & 0 & -b_2 \\ 
-b_3 \rho^2 u & \bar{g}_3 & -b_3 u \\
b_2 & 0 & -b_2 \\
\end{matrix} \right)
$$
with characteristic polynomial $(X-g_2)(X-\bar{g}_3)(X-\bar{g}_2)$.
Since $g$ respects the Hermitian form, we have $g_1\bar{g}_2=g_3\bar{g}_3=1$,
and therefore $j_2(g,(u,v))={\rm diag}(\bar{g}_3, \bar{g}_2)$
and $j_1(g,(u,v))=g_2$. 
Hence, up to a base change we have
$c\tau+d=j_2(g,(u,v))\oplus j_1(g,(u,v))$.
Since arbitrary Hermitian matrices can be
diagonalized the lemma follows.
\end{proof}

\end{section}
%%%%%%%%%%%%%%%%%%%%%%%%%%
\begin{section}{Modular curves}\label{ModularCurves}
Picard modular surfaces contain modular curves defined by
positive vectors in the lattice $O_F^3$. Though these curves
were considered by Feustel, Kudla, Cogdell and others, their
geometry on these surfaces did not yet get the attention that their
counterparts on Hilbert modular surfaces got.  Here we need just
two curves that play a role. 

A vector $w=(a,b,c) \in O_F^3$ with positive norm
$ab'+a'b+cc'$ defines a $1$-ball $\mathfrak{B}_w$ 
inside $\mathfrak{B}=
\{ L: L \subset Z_{\RR}: \dim_{\CC}L=1, h_{|L}<0\}$
by the condition $L\bot w$, or equivalently by
$$
a'  + b'v + c' u=0 \, . \eqno(3)
$$
This defines a curve in $\Gamma\backslash \mathfrak{B}$
and also in the Baily-Borel compactification $X^{*}_{\Gamma}$.
We can define a  modular curve $T_N$ in 
$\Gamma\backslash \mathfrak{B}$ as the union of all curves
defined by equations (3) with $ab'+a'b+cc'=N$. 
Its closure in $X^{*}_{\Gamma}$ is also denoted $T_N$.

In the following
we need the two curves $T_1$  and $T_2$.
The curve $T_2$ was studied in \cite{P-S}. 

The curve $T_1$ has one irreducible component on $X^{*}_{\Gamma}$ as one sees by verifying that the action of $\Gamma$ on positive vectors $(a,b,c)$ 
with $ab'+a'b+cc'=1$ is transitive 
using the generators of $\Gamma$, see Section \ref{sectionPMF}. 
It can be defined by $u=0$ and viewed  
as a quotient of the upper half plane  
$\mathfrak{H}$ embedded in $\mathfrak{B}$ by
$\tau \mapsto (0,\sqrt{-3}\tau)$.  The image in
$\Gamma_1[\sqrt{-3}]\backslash \mathfrak{B}$  is isomorphic to
$\Gamma_0(3)\backslash \mathfrak{H}$ with $\Gamma_0(3)$ the usual
congruence subgroup of ${\rm SL}(2,{\ZZ})$.
The modular form $\zeta$ vanishes
on $T_1$ since $\zeta(-u,v)=-\zeta(u,v)$.

The curve $T_2$ is a Shimura curve associated to the unit group 
of a maximal order in the quaternion
algebra $\left( {-3, 2 \over {\QQ}}\right)$ of discriminant $6$. 
The curve $T_2$ has one irreducible component on $X^{*}_{\Gamma}$ 
and it can be defined by $v=-1$ and 
is the fixed point locus of the involution
$$
\xi: (u,v) \mapsto (-u/v,1/v)
$$
that is induced by the symmetry
$(z_1,z_2,z_3) \mapsto (z_2,z_1,-z_3)$ of our Hermitian
form $z_1z_2'+z_1'z_2+z_3z_3'$. The involution $\xi$
induces an action on spaces of modular forms. The action
on a modular form
$f \in M_k(\Gamma[\sqrt{-3}])$ restricted to $v=-1$ is by 
multiplication by $(-1)^k$. 
In particular, the  Eisenstein series 
$E_9$ vanishes on the fixed point locus of $\xi$.

More precisely, on $X^*_{\Gamma}$ the modular forms $\zeta^6$ and $E_9^2$ 
give rise to the cycle relations:
$$
6\, \lambda_1= [T_1], \qquad 9\, \lambda_1= [T_2]\, ,
$$
where $\lambda_1$ represents the first Chern class of $L$
and the classes $[T_1]$ and $[T_2]$ are Q-classes on the orbifold
$X_{\Gamma}^*$ in the sense of Mumford \cite{Mumford1983}. Indeed,
the modular forms $\zeta^6$ and $E_9^2$ that live on $X_{\Gamma}$
have divisors $6\, T_1$ and $2\, T_2$, where the multiplicities
come from the fact that a generic point of $T_1$ (resp.\ $T_2$) 
has a stabilizer of order $6$ (resp.\ of order $2$).  Equivalently,
one can also work
on $X^*_{\Gamma_1[\sqrt{-3}]}$ where one has the modular forms
$\zeta$ and $E_9$ with divisors $T_1$ and $T_2$; see for example
the Taylor expansion of $\zeta$ along $u=0$ in Section \ref{Restriction-section}. 
The volume form on the orbifold defines a class $T_0$, see \cite{Cogdell}.

\begin{corollary}\label{TNseries}
If $[T_N]$ denotes for $N\in {\ZZ}_{\geq 0}$ 
the $Q$-class of the curve $T_N$ on $X^*_{\Gamma}$,
then the series $\sum_{N=0}^{\infty} [T_N]\,  q^N$ equals $F\otimes \lambda_1$ with 
$$
F=-1/6 + 6\, q + 9\, q^2 + 42\, q^3 + 78\, q^4 + O(q^5)
$$
a modular form in $M_3(\Gamma_0(3),\big( \frac{\cdot}{3}\big))$.
\end{corollary}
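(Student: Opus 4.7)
The plan is to combine a modularity theorem of Cogdell--Kudla--Millson type for special cycles on Picard modular surfaces with an explicit identification of the resulting modular form from its first few Fourier coefficients. More precisely, by the result of Cogdell \cite{Cogdell}, the generating series $\sum_{N\geq 0}[T_N]\,q^N$ of special $Q$-classes on $X^*_\Gamma$, for the unitary group of signature $(2,1)$ over $F=\QQ(\sqrt{-3})$, is modular with values in the rational Picard group, of weight $3=2+1$ (dictated by the signature), level $\Gamma_0(3)$ (dictated by the discriminant of $F$), and Nebentypus $\chi_{-3}=\bigl(\tfrac{\cdot}{3}\bigr)$. Since $X^*_\Gamma$ is dominated by $\PP^2$ modulo a finite group as recalled in Section~\ref{sectionPMF}, its rational Picard group is one-dimensional and generated by $\lambda_1$, and the series can therefore be written as $F\otimes\lambda_1$ with $F\in M_3(\Gamma_0(3),\chi_{-3})$.

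Next I would identify this ambient space. The space $M_3(\Gamma_0(3),\chi_{-3})$ contains no cusp forms and is two-dimensional, spanned by the two Eisenstein series attached to the factorizations of $\chi_{-3}$ as a product of Dirichlet characters,
$$
E^{(1)}=-\tfrac{1}{9}+\sum_{n\geq 1}\Bigl(\sum_{d\mid n}\chi_{-3}(d)\,d^2\Bigr)q^n,\qquad
E^{(2)}=\sum_{n\geq 1}\Bigl(\sum_{d\mid n}\chi_{-3}(n/d)\,d^2\Bigr)q^n.
$$
Hence $F$ is determined uniquely by any two of its Fourier coefficients.

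Finally I would compute two coefficients on the geometric side and solve a $2\times 2$ linear system. The constant term $[T_0]$ is represented by the orbifold volume class; with our normalization of the line bundle $L$ (consistent with the choices made for $\zeta^6$ and $E_9^2$ via $6\lambda_1=[T_1]$ and $9\lambda_1=[T_2]$) it equals $-\tfrac{1}{6}\lambda_1$. Combined with the cycle relation $[T_1]=6\lambda_1$ established above, writing $F=aE^{(1)}+bE^{(2)}$ yields the system $-a/9=-1/6$ and $a+b=6$, with the unique solution $(a,b)=(3/2,9/2)$. Expanding $\tfrac{3}{2}E^{(1)}+\tfrac{9}{2}E^{(2)}$ reproduces the stated coefficients $-1/6,\,6,\,9,\,42,\,78,\ldots$, and as a built-in consistency check the coefficient of $q^2$ comes out to $9$, matching the relation $[T_2]=9\lambda_1$ recorded earlier.

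The main obstacle I anticipate is pinning down the precise value of the constant term $[T_0]=-\tfrac{1}{6}\lambda_1$; this requires matching the metric underlying the orbifold volume class to our chosen factor of automorphy $j_1$ defining $L$, and accounting correctly for the stabilizer orders that already entered the normalizations of $\zeta^6$ and $E_9^2$. Once this single constant is nailed down, everything else is an application of Cogdell's theorem followed by the two-variable linear algebra above.
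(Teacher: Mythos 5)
Your overall strategy is the same as the paper's: invoke Cogdell's modularity theorem for the generating series of special cycles on the Picard modular surface, observe that $M_3(\Gamma_0(3),\big(\tfrac{\cdot}{3}\big))$ is two-dimensional, and pin down $F$ by matching two coefficients. The one genuine gap is your choice of \emph{which} two coefficients to use. You take the constant term $[T_0]=-\tfrac16\lambda_1$ as an input, but you never establish it: you say it "equals $-\tfrac16\lambda_1$ with our normalization" and then list the verification of this very fact as "the main obstacle." Since $-1/6$ is part of the conclusion being proved, using it as one of the two linear conditions is circular as written. Computing the orbifold volume class directly (matching the invariant metric to the factor of automorphy $j_1$) is doable but is real work that your proposal does not carry out.

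The fix is exactly the swap you already have the ingredients for: use the two cycle relations $[T_1]=6\lambda_1$ and $[T_2]=9\lambda_1$, both of which are established geometrically just before the corollary from the divisors of $\zeta^6$ and $E_9^2$, as the two linear conditions. In your basis this gives $a+b=6$ and $-3a+3b=9$, hence $(a,b)=(3/2,9/2)$, and the constant term $-a/9=-1/6$ then comes out as a \emph{consequence} rather than an assumption. This is precisely what the paper does (it says it knows the coefficients of $q$ and $q^2$ and uses $\dim M_3=2$), with the added technical remark, which you omit but which is worth keeping, that one should really work with the corrected classes $[T_N^c]$ on the minimal resolution, orthogonal to the cusp resolution curves, when applying Cogdell's theorem. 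Your explicit Eisenstein basis and the verification of the coefficients $42$ and $78$ are correct and make the identification more transparent than the paper's terse statement.
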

\begin{proof} We can work on the minimal resolution of singularities 
$\tilde{X}_{\Gamma_1[\sqrt{-3}]}$ of $X_{\Gamma_1[\sqrt{-3}]}^*$
and consider the classes $[T_N^c]$ there that are defined by a linear
combination (with ${\QQ}$-coefficients) of 
$T_N$ plus a sum of resolution curves such that $T_N^c$ is orthogonal
to the cusp resolutions. Then we can use the result of
Cogdell \cite[Thm.\ on page 126]{Cogdell}, the analogue 
for Picard modular surfaces of the Hirzebruch-Zagier theorem 
on curves on Hilbert modular surfaces. It says that $\sum_N [T_N^c] q^N$
is a modular form of weight $3$ on $\Gamma_0(3)$ with Dirichlet character.
Since $\dim M_3(\Gamma_0(3), \big( \frac{}{3}\big))=2$ and we know the 
coefficients of $q$ and $q^2$, this identifies the
modular form.
\end{proof}

\smallskip

In the Baily-Borel 
compactification 
$X^*_{\Gamma[\sqrt{-3}]}$, identified with ${\PP}^2$
viewed as the hyperplane $x_1+x_2+x_3+x_4=0$ in ${\PP}^3$
and with the action of $\mathfrak{S}_4$ given by 
$x_i\mapsto {\rm sgn}(\sigma)x_{\sigma(i)}$, the 
lines $x_i=x_j$ describe the six components of $T_1$. 
Similarly, the curve $T_2$
has three components and is given by 
$x_i+x_j=0$ for $1\leq i< j\leq 4$.

We now describe the image of $T_1$ 
under the modular embedding $\iota$ constructed
in the preceding section. The image in $\mathfrak{H}_3$ under $\iota$ 
of the curve given by $u=0$ is
$$
\left\{ \left( 
\begin{matrix}
\tau_{11} & 0 & \tau_{12} \\
0 & 1+\rho & 0 \\
\tau_{12} & 0 & \tau_{22} \\
\end{matrix}\right)
\in \mathfrak{H}_3 : \tau_{11}=\tau_{22}= -2\tau_{12}
\right\} \, .
$$
In particular, 
$\iota(T_1)\subset \mathcal{A}_{2,1} \subset \mathcal{A}_3$, with
$\mathcal{A}_{2,1}$ the moduli of abelian varieties that are
products.
The equations $\tau_{11}=\tau_{22}$ and $\tau_{11}+2\tau_{12}=0$
define two Humbert surfaces of discriminant $4$ in $\mathcal{A}_2$,
cf.\ \cite[p. 210]{vdG-HMS}.

The fact that $\iota(T_1)$ is contained in  $\mathcal{A}_{2,1}$
means that an abelian threefold $X$ representing a point of $T_1$
splits as a product $X=X_2\times X_1$ with $X_2$ a principally polarized abelian surface
and $X_1$ an elliptic curve. Since $X_1$ has multiplication by $\rho$
the curve $X_1$ is rigid. This means that the Hodge bundle ${\EE}$
restricted to an irreducible component of $T_1$ on $X_{\Gamma_1[\sqrt{-3}]}$ 
has a trivial factor. By Lemma \ref{pullbacksplits} the Hodge bundle ${\EE}$
splits as $U\oplus L$ and since the action of $\rho$  on the $2$-dimensional
factor $X_2$ has eigenvalues $(\rho,\rho^2)$ (see \cite[Section 5.5]{B-vdG}),
we see that this constant factor is contained in $U$.

This means that the bundle $U$
restricted to an irreducible component $T$ of our modular curve 
$T_1$ on $X_{\Gamma_1[\sqrt{-3}]}$ 
 is of the form $\mathcal{O}_{T} \oplus N$
with $N$ the line bundle obtained by the 
restriction of $\det(U)$; its sections
 correspond to modular forms of weight $1$. 

However, the curve $T_1$ 
on $\Gamma_1[\sqrt{-3}]\backslash \mathfrak{B}$ 
is reducible with six smooth irreducible components 
meeting in ordinary double points.

\begin{lemma} \label{taylor}
Let $f$ be a meromorphic modular form of weight $(j,k)$ 
on $\Gamma_1[\sqrt{-3}]$ that is 
holomorphic outside the curve $T_1$. 
If $f$ has order $r$ along $T_1$, then
the first non-zero Taylor term of $f$ along $T_1$ is an element of 
$$
\oplus_{i=0}^j M_{i+k+r}^{(r)}(\Gamma_1(3)) \, ,
$$ 
with $M_k^{(s)}(\Gamma_1(3))$ 
the space of meromorphic modular 
forms of weight $k$ on $\Gamma_1(3)$ 
that are holomorphic outside the orbit of $\tau_0=(1-\rho^2)/3 \in \mathfrak{H}$ and have
order at least $s$ at $\tau_0$.
\end{lemma}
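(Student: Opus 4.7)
The plan is to interpret a meromorphic modular form $f$ of weight $(j,k)$ on $\Gamma_1[\sqrt{-3}]$, holomorphic outside $T_1$ and of order $r$ along $T_1$, as a holomorphic section of the sheaf $\mathcal{V}:=\mathrm{Sym}^j(U)\otimes L^k\otimes\mathcal{O}(-rT_1)$, so that the first non-zero Taylor term is the restriction of this section to a smooth component $C$ of $T_1$. Fix $C=\{u=0\}$, parametrized by $\tau\mapsto(0,\sqrt{-3}\tau)$. A direct computation shows its stabilizer in $\Gamma_1[\sqrt{-3}]$ consists of matrices with $g_{31}=g_{32}=0$ acting on $\tau$ through $\tilde\gamma=\bigl(\begin{smallmatrix}g_{11}&g_{12}/\sqrt{-3}\\ g_{21}\sqrt{-3}&g_{22}\end{smallmatrix}\bigr)$, with pointwise kernel $\mu_3\cdot I_3$ and effective image equal to $\Gamma_1(3)\cong\Gamma_0(3)/\{\pm I\}$ inside $\mathrm{SL}_2(\mathbb{Z})$; the identity $\det(g)=g_{33}\,\det(\tilde\gamma)=1$ forces $g_{33}=1$ whenever $\tilde\gamma$ is nontrivial. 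We use $\Gamma_1(3)$ rather than $\Gamma_0(3)$ because $-I\notin\Gamma_1(3)$, which accommodates weights of any parity.

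Next I would compute $\mathcal{V}|_C$. The splitting $U|_C\cong\mathcal{O}_C\oplus N$ from the rigid elliptic factor discussed before the lemma, together with the triviality of $R$ on $\Gamma_1[\sqrt{-3}]$, gives $N\cong L|_C$; the factor $j_1(g,0,v)=g_{21}\sqrt{-3}\tau+g_{22}=c\tau+d$ then identifies $L|_C$ with the weight-one Hodge bundle on $\Gamma_1(3)\backslash\mathfrak{H}$. Hence $\mathrm{Sym}^j(U)\otimes L^k|_C\cong\bigoplus_{i=0}^jL^{k+i}|_C$. To handle $\mathcal{O}(-rT_1)|_C$, write $\mathcal{O}(T_1)|_C=N_C\otimes\mathcal{O}(T_1-C)|_C$: the normal bundle $N_C$ has factor of automorphy $\partial u'/\partial u|_{u=0}=g_{33}/j_1=1/j_1$, so $N_C\cong L^{-1}|_C$, while $\mathcal{O}(T_1-C)|_C\cong\mathcal{O}(\sum_p p)$ over the transverse intersections of $C$ with the other five components of $T_1$ on $X_{\Gamma_1[\sqrt{-3}]}$. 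These intersection points all project to the elliptic point $\tau_0=(1-\rho^2)/3$ of order $3$ of $\Gamma_0(3)$, as one checks either from the Baily-Borel model (the six components are the lines $x_i=x_j$, which concur over $\tau_0$) or by direct fixed-point computation. Assembling the pieces,
\[
\mathcal{V}|_C\cong\bigoplus_{i=0}^jL^{k+i+r}|_C\otimes\mathcal{O}(-r[\tau_0]),
\]
whose meromorphic sections are precisely the tuples of modular forms of weight $k+i+r$ on $\Gamma_1(3)$ that are holomorphic outside the orbit of $\tau_0$ and of order at least $r$ at $\tau_0$, i.e.\ elements of $\bigoplus_{i=0}^jM_{k+i+r}^{(r)}(\Gamma_1(3))$.

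The main technical obstacle is verifying the two bundle identifications in the second step: first, $N_C\cong L^{-1}|_C$ via the arithmetic identity $g_{33}=1/\det(\tilde\gamma)=1$ for stabilizer elements acting nontrivially on $\mathfrak{H}$, which requires a careful treatment of the $2\times 2$ block structure and the units of $O_F$; and second, pinning down the intersection points of $C$ with the other components of $T_1$ and matching them to the $\Gamma_1(3)$-orbit of the elliptic point $\tau_0$, which combines the Baily-Borel description of $T_1$ with the computation of the elliptic point of $\Gamma_0(3)$.
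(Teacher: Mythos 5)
Your argument follows essentially the same route as the paper: restrict $\mathrm{Sym}^j(U)\otimes L^k$ to a component $T$ of $T_1$ using the splitting $U|_T\cong\mathcal{O}_T\oplus N$ with $N\cong L|_T$, identify the (co)normal bundle of $T$ with $L^{\mp1}|_T$ so that the $r$th Taylor coefficient lands in $\oplus_i L^{i+k+r}|_T$, and then account for the double point of $T_1$ on $T$ lying over $\tau_0$. The one step you handle differently is the normal-bundle identification: the paper deduces that the conormal bundle is $N$ from the action of $\rho$ on the deformation space of the split abelian threefold $X_2\times X_1$, whereas you compute the factor of automorphy $\partial u'/\partial u|_{u=0}=g_{33}/j_1$ of the stabilizer of $\{u=0\}$ directly; your version is more explicit and self-contained, the paper's is shorter and conceptually tied to the moduli interpretation. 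One imprecision worth fixing: you speak of the transverse intersections of $C$ with ``the other five components,'' but in the Baily--Borel model the line $x_1=x_2$ meets only its complementary line $x_3=x_4$ inside the open ball quotient (at the point $[1:1:-1:-1]$ over $\tau_0$); its intersections with the remaining four lines occur at the points of type $[1:1:1:-3]$, which are the cusps and hence lie outside $X_{\Gamma_1[\sqrt{-3}]}$. This count is exactly what justifies the coefficient $1$ in your twist $\mathcal{O}(-r[\tau_0])$ — with five interior intersection points over $\tau_0$ you would only get order at least $5r$ there, which is weaker than the lemma for $r<0$ — so the final formula you wrote is correct, but the justification as stated does not deliver it.
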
 
\begin{proof}
Restricting the vector bundle ${\rm Sym}^j(U) \otimes L^k$
to an irreducible component $T$ of the modular curve $T_1$ gives the vector bundle
$B=\oplus_{i=0}^j N^{\otimes (i+k)}$.  
Moreover, the conormal space of the component $T$ of $T_1$
in $\Gamma_1[\sqrt{-3}]\backslash \mathfrak{B}$ when pulled back to
$\mathfrak{H}$ can be identified with a fibre of the line bundle $N$,
as one sees by looking at the action of $\rho$ on 
the deformation space of an abelian threefold $X=X_2\times X_1$ representing
a point of $T$. Thus the conormal bundle of $T$ can be identified with $N$.
The $r$th term in the Taylor expansion along $T$ of $f$,
viewed as a section of ${\rm Sym}^j(U)\otimes L^k$,
is a section of $B\otimes N^{\otimes r}$.
Correcting for the double point of $T_1$ lying on a component
$T$, that is represented by $\tau_0$, implies the result.
\end{proof}

\smallskip

For later use we discuss the Taylor development of modular forms
along the curve $T_1$. Recall that this curve is represented by $u=0$
in $\mathfrak{B}$.
We can apply Proposition 8.4 of \cite{C-vdG} that we recall for convenience:
Let $f\in M_{j,k}(\Gamma[\sqrt{-3}],\text{det}^l)$ and write
\[
f(u,\sqrt{-3}\tau)=\sum_{n\geqslant 0}
\left[
\begin{smallmatrix}
f_n^{(0)}(\tau) \\ \vdots \\ f_n^{(j)}(\tau) 
\end{smallmatrix}
\right]
u^n \, .
\]
We write $\Gamma(3)$ for the principal  congruence subgroup of ${\rm SL}(2,{\ZZ})$,
and $\Gamma_1(3)$, $\Gamma_0(3)$ for the usual congruence subgroups.

\begin{proposition}\label{quasi-modularity}
The first component $f_n^{(0)}$ is a modular form of weight $k+n$ on $\Gamma_1(3)$
and a cusp form if $n>0$. Moreover $f_n^{(m)}$ vanishes unless $n+j-m\equiv l \bmod 3$.
The function $f_0^{(m)}$  is a modular form of weight $k+m$ on $\Gamma_1(3)$, while
for $n>0$ the function $f_n^{(m)}$  is a quasi-modular form of weight $k+m+n$ on $\Gamma_1(3)$.
\end{proposition}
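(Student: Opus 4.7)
The approach is to analyse the transformation behaviour of $f$ under the stabiliser of the curve $\{u=0\}\subset\mathfrak{B}$ inside $\Gamma[\sqrt{-3}]$ and read off constraints on the Taylor coefficients $f_n^{(m)}$. First I would single out from the generators listed in Lemma~\ref{generators} those with $g_{31}=g_{32}=0$ (the condition preserving $u=0$): only $g_0$, $g_1=\mathrm{diag}(1,1,\rho)$, $g_2$ and $g_4$ qualify. Under the parametrisation $\tau\mapsto(0,\sqrt{-3}\tau)$ of $T_1$, one computes that $g_2$ induces the M\"obius action by $\gamma_2=\left(\begin{smallmatrix}1&0\\-3&1\end{smallmatrix}\right)$ (so $c\tau+d=1-3\tau$), that $g_4$ induces $\tau\mapsto\tau+1$, and that these two together generate $\Gamma_1(3)$; meanwhile $g_1$ fixes $\tau$ and acts on the $u$-direction by $u\mapsto\rho u$.

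The statements coming from $g_1$ and $g_4$ yield the easier parts of the proposition. Since $j_2(g_1,\cdot)$ is diagonal on $u=0$, expanding $f\vert_{j,k} g_1=\rho^l f$ componentwise and matching $u^n$-coefficients gives $\rho^{j-m+n}f_n^{(m)}=\rho^l f_n^{(m)}$, which forces $f_n^{(m)}=0$ unless $n+j-m\equiv l\pmod 3$; this is claim~(2). The equation $f\vert_{j,k} g_4=f$ similarly produces the translation invariance $f_n^{(m)}(\tau+1)=f_n^{(m)}(\tau)$.

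The heart of the argument is the equation $f\vert_{j,k} g_2=f$. On $v=\sqrt{-3}\tau$ one has $j_1=c\tau+d$ and $j_2=\left(\begin{smallmatrix}1&0\\ \sqrt{-3}u&c\tau+d\end{smallmatrix}\right)$, which is lower triangular, so $\Sym^j(j_2^{-1})$ is triangular with its $l$-th row involving only entries of index $m\leq l$ (in the convention matching the statement). Substituting the Taylor expansions on each side and matching coefficients of $u^n$ in the $l$-th component produces
\[
(c\tau+d)^{k+n+l}f_n^{(l)}(\tau)=f_n^{(l)}(\gamma_2\tau)+\sum_{m=\max(0,l-n)}^{l-1}\binom{j-m}{l-m}(-\sqrt{-3})^{l-m}(c\tau+d)^{l-m}f_{n-l+m}^{(m)}(\gamma_2\tau).
\]
For $l=0$ the sum is empty and $f_n^{(0)}(\gamma_2\tau)=(c\tau+d)^{k+n}f_n^{(0)}(\tau)$ is a genuine weight $k+n$ transformation, giving modularity on $\Gamma_1(3)$ (claim~1, up to cuspidality); for $n=0$ the constraint $n-l+m\geq 0$ combined with $m<l$ again empties the sum, proving claim~3.

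For the quasi-modular claim~4 I would induct on $l$: assuming inductively that $f_{n-l+m}^{(m)}$ is quasi-modular of weight $k+(n-l+m)+m=k+n-l+2m$ for each $m<l$, each correction term in the display above becomes $(c\tau+d)^{l-m}(c\tau+d)^{k+n-l+2m}Q_m((c\tau+d)^{-1})=(c\tau+d)^{k+n+m}Q_m((c\tau+d)^{-1})$ for some polynomial $Q_m$ with constant term $f_{n-l+m}^{(m)}(\tau)$, and dividing through by $(c\tau+d)^{k+n+l}$ expresses $(c\tau+d)^{-(k+n+l)}f_n^{(l)}(\gamma_2\tau)$ as $f_n^{(l)}(\tau)$ plus a polynomial in $(c\tau+d)^{-1}$ without constant term --- exactly the quasi-modularity of weight $k+n+l$. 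Finally, the cuspidality of $f_n^{(0)}$ for $n>0$ comes from the Koecher principle for Picard modular forms: at a cusp of $X_\Gamma^*$ to which $T_1$ extends, the constant Fourier--Jacobi coefficient $\phi_0(u)$ is forced to be independent of $u$ (the cusp being zero-dimensional), so the $n$-th $u$-derivative at $u=0$ has no $q^0$-term in the Fourier expansion at that cusp of $T_1$. The main obstacle will be fixing conventions so that $\Sym^j(j_2^{-1})$ has the triangular indexing compatible with the statement, and then organising the induction cleanly so that the lower-$l$ corrections assemble into the required $(c\tau+d)^{-1}$-polynomial form.
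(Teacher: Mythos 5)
Your proof is correct and follows essentially the same route as the paper: the paper's own argument writes down the embedding of $\Gamma_1(3)$ into $\Gamma[\sqrt{-3}]$ as the stabiliser of $\{u=0\}$ and derives exactly your componentwise transformation law (Equation \ref{vergelijkingen}) from the lower-triangularity of ${\rm Sym}^j(j_2^{-1})$ along $u=0$, which is what you obtain by working with the generators $g_1$, $g_2$, $g_4$ instead. Your write-up actually supplies more detail than the paper on the quasi-modularity induction and the cuspidality of $f_n^{(0)}$ (where you should just make sure the vanishing of the constant term is checked at both cusps of $\Gamma_1(3)$, not only at $\infty$).
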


The proof was not given in \cite{C-vdG}. Since we use this proposition
and a variant later, we give some details. 
The modular embedding of $T_1$ on $X_{\Gamma[\sqrt{-3}]}$ is given by
$$
\left( \begin{matrix} a & b \\ c & d \\ \end{matrix} \right) \mapsto
\left( \begin{matrix} a & \sqrt{-3} b & 0 \\
c/\sqrt{-3} & d & 0 \\ 0 & 0 & 1\\ \end{matrix} \right),
\qquad \tau \mapsto (0,\sqrt{-3}\, \tau) \, .
$$
We write $F\in M_{j,k}(\Gamma[\sqrt{-3}],{\det}^l)$ as
$$
F(u,v)=\left( \begin{matrix} F^{(0)}\\ \vdots \\ F^{(j)} \\
\end{matrix} \right)
\qquad \text{\rm with} \quad F^{(m)}= \sum_{n=0}^{\infty} F^{(m)}_n(v) u^n \, .
$$ 
Changing coordinates by setting $f^{(m)}_n(\tau)= F^{(m)}_n(\sqrt{-3}\tau)$,
the modularity of $F$ implies the following equation.
\begin{vergelijking}\label{vergelijkingen}
$$
\sum_{n=0}^{\infty} 
\left(\begin{matrix} f^{(0)}_n(\tau) \\ \vdots \\ f^{(j)}_n(\tau) \\
\end{matrix} \right)\, u^n =
(c\tau+d)^{-k-j}
{\rm Sym}^j\left( \begin{matrix} c\tau+d & 0 \\ -cu/\sqrt{-3} & 1 \\
\end{matrix} \right)
\sum_{n=0}^{\infty} (c\tau+d)^{-n} \left( \begin{matrix} 
 f^{(0)}(\frac{a\tau+b}{c\tau+d}) \\ \vdots \\
 f^{(j)}(\frac{a\tau+b}{c\tau+d})  \\
\end{matrix} \right) u^n
$$
\end{vergelijking}
Here the matrix ${\rm Sym}^j\left( \begin{matrix} c\tau+d & 0 \\ -cu/\sqrt{-3} & 1 \\
\end{matrix} \right)
$ is a lower diagonal matrix with entry on place $(r,s)$ for $r\geq s$
equal to
$$
\binom{j}{j+1-r} \left(\frac{-c}{\sqrt{-3}}\right)^{r-s} 
(c\tau+d)^{j+1-r}\, .
$$
From this it follows that $F^{(m)}_n$ is a modular form 
(and not only quasi-modular) 
if $F^{(\mu)}_{\nu} =0$ for all $\mu<m$ and $\nu < n$.

\bigskip

\noindent
\textbf{Modular forms on $\Gamma_1(3)$.}
For later use we recall some facts about elliptic modular forms
of level $3$.
Recall that the ring of modular forms on $\Gamma(3)$
equals
\[
M(\Gamma(3))=
\CC[\vartheta,\psi]\, , \quad \text{where} \quad
\vartheta(\tau)=\sum_{\alpha \in O_F}q^{N(\alpha)}
\quad \text{and} \quad
\psi(\tau)=\frac{\eta(3\tau)^3}{\eta(\tau)}
\]
with $\vartheta$ and $\psi$ of weight $1$ and $\eta(\tau)=q^{1/24}\prod_{n\geqslant 1}(1-q^n)$
the Dedekind eta-function, and for $\tau \in \mathfrak{H}$
we set as usual $q=e^{2 \pi i \tau}$.
Since $\Gamma_1(3)/\Gamma(3)$ is cyclic of order $3$ generated by
$
T=
\left(
\begin{smallmatrix}
1 & 1 \\ 0 & 1
\end{smallmatrix}
\right)
$
and $\psi\vert_{1} T=\rho\,\psi$, we get the structure of the ring of modular forms on $\Gamma_1(3)$
\[
M(\Gamma_1(3))=\CC[\vartheta,\psi^3].
\]
For example, the form $F$ of Corollary \ref{TNseries} is 
$(54\, \psi^3-\vartheta^3)/6$. 
To lighten notation we will sometimes use the relation
$$
\psi(\vartheta^3-\psi^3)=\eta^8\, .
$$
Note that 
$$
M_{2k}(\Gamma_1(3))=M_{2k}(\Gamma_0(3))\quad \text{\rm and} \quad 
M_{2k+1}(\Gamma_1(3))=M_{2k+1}(\Gamma_0(3),\big(\frac{\cdot}{3}\big))\, .
$$
By a result of Kaneko-Zagier (see \cite[Proposition 1, part b]{KZ}) we know that the graded ring $\widetilde{M}(\Gamma_1(3))$ of quasi-modular forms on
$\Gamma_1(3)$ is given by
\[
\widetilde{M}(\Gamma_1(3))=
M(\Gamma_1(3))\otimes \CC[e_2]\simeq \CC[\vartheta,\psi^3,e_2]\, ,
\]
where $e_2$ is the Eisenstein series of weight $2$ on ${\rm SL}(2,\ZZ)$.
We normalise $e_2$ such that its
Fourier expansion is given by
\[
e_2(\tau)=1-24\sum_{n\geqslant 1}\sigma_1(n)q^n.
\]
Examples of modular forms of level $3$ are given by
$\Theta_j(\tau)= \sum_{\alpha \in O_F}\alpha^jq^{N(\alpha)}\in M_{j+1}(\Gamma_1(3))$. 
Observe that  $\Theta_j$ is a cusp form as soon as $j>1$
and identically zero if $j \not \equiv 0 \bmod 6$.
For example, we have $\Theta_0=\vartheta$ and 
\[
\Theta_6=
6\vartheta \psi^3(\vartheta^3-27\psi^3)=
6\, \vartheta\psi^2\eta^8,
\quad
\Theta_{12}=
\vartheta \eta^{8} \left(\eta^{16}+18 \psi^{4} \eta^{8}+729 \psi^{8}\right) \, .
\]

Recall that $M_k^{(r)}(\Gamma_1(3))$ is the space of meromorphic
modular forms of weight $k$ that are holomorphic outside the orbit
of $\tau_0=(1-\rho^2)/3$ and have order at least $r$ at $\tau_0$.

The form $\vartheta$ is non-zero outside the orbit of $\tau_0$.
Multiplication by $\vartheta^{-r}$ provides an isomorphism for $r\in {\ZZ}$
$$
M_r^{(r)}(\Gamma_1(3)) \langepijl{\sim} M_0(\Gamma_1(3))={\CC} \, . \eqno(4)
$$
\end{section}
%%%%%%%%%%%%%%%%%%%%%%%%%%%%%%%%%%%%%%%%5
\begin{section}{A stack quotient}\label{StackQuotient}
In this section we discuss the moduli stack of curves of genus $3$
that are a cyclic cover of degree $3$ of the projective line.
We consider smooth projective curves $C$ over ${\CC}$ of genus $3$ together an 
automorphism $\alpha$ of order~$3$ such that the eigenvalues on 
$H^0(C,\Omega^1_C)$ are $\rho,\rho,\rho^2$. 
An isomorphism $(C,\alpha) \langepijl{} (C',\alpha')$ is an isomorphism
$\nu: C\to C'$ such that $\nu \alpha= \alpha' \nu$.
We let $\mathcal{N}$ denote the moduli stack over ${\CC}$ 
of such curves.

A choice $\omega_1,\omega_2$ of a basis of the $\rho$-eigenspace
$H^0(C,\Omega_C^1)^{\rho}$ defines a morphism of degree $3$ to
$C/\alpha={\PP}^1$. By the holomorphic Lefschetz fixed point 
formula the automorphism $\alpha$ has 
five fixed points on $C$, four of which have action by $\rho$ on the tangent
space and one with action by $\rho^2$. 

The $\rho$-eigenspace of ${\rm Sym}^4(H^0(C,\Omega^1_C))$ has dimension $7$,
whereas the $\rho$-eigenspace of 
$H^0(C,(\Omega_C^1)^{\otimes 4})$ has dimension $6$. (This follows
from the holomorphic Lefschetz formula; or by the simple argument that
the ternary quartic defining the canonical image of $C$ in ${\PP}^2$ must lie 
in an eigenspace and all elements with eigenvalue $1$ or $\rho^2$ are divisible by $\eta$,
a generator of $H^0(C,\Omega_C^1)^{\rho^2}$,
and this would give a reducible equation.)
After choosing a generator 
$\eta$ of $H^0(C,\Omega_C^1)^{\rho^2}$ we thus find a non-trivial relation
$$
b_1 \eta^3 \omega_1 + b_2 \eta^3 \omega_2= \sum_{i=0}^4 a_i \, \omega_1^{4-i}\omega_2
$$
with $b_i,a_j \in {\CC}$.
By setting $f_1=b_1x_1+b_2x_2$ and $f_4=\sum_{i=0}^4 a_i x_1^{4-i}x_2^i$ and
observing that $f_1$ is not identically zero, we obtain an
equation
$$
y^3 f_1=f_4 \, . \eqno(5)
$$
This represents the canonical image of $C$.
A different normalization 
is obtained by putting $y=\tilde{y}/f_1$ 
which gives $\tilde{y}^3= f_4f_1^2$.
By putting the zero of $f_1$ at infinity 
we find yet another normalization: 
an affine equation $u^3=f$ with $f$ of degree
$4$ in $v$. 
The map $\alpha$ corresponds to the field exension ${\CC}(u,v)/{\CC}(v)$.  

Changing the choice of basis of $H^0(C,\Omega_C^1)^{\rho}$
corresponds to an action of ${\rm GL}_2$. Changing the basis $\eta$ corresponds to an
action of ${\GG}_m$. Together this defines an action of the 
subgroup $\mathcal{G}={\rm GL}_1 \times {\rm GL}_2 \subset {\rm GL}_3$ 
on $H^0(C,\Omega^1_C)$
that preserves the decomposition in eigenspaces for $\alpha$.

Let $V$ be the
$2$-dimensional ${\CC}$-vector
space generated by elements $x_1,x_2$.
We view $V$ as the standard representation of ${\rm GL}_2$.  
We consider elements $f_1 \in V$ and $f_4 \in
{\rm Sym}^4(V)$. If the discriminant of $f_4f_1$ does not vanish,
the equation $y^3f_1=f_4$ defines an equation of a 
smooth projective curve $C$ of genus $3$ with an automorphism
$\alpha$ given by $y\mapsto \rho y$. 
The space $H^0(C,\Omega_C^1)$ comes with a basis
consisting of the forms (in affine coordinates)
$\eta=dx/f_1y$, $\omega_1= dx/y^2$, $\omega_2= dx/f_1y^2$.

An element $(a,b;c,d) \in 
{\rm GL}_2$ acts on $f_4$ and $f_1$ by
$$
f_4(x_1,x_2) \mapsto f_4(ax_1+bx_2,cx_1+dx_2), \quad
f_1(x_1,x_2)\mapsto f_1(ax_1+bx_2,cx_1+dx_2),
$$
and we can define an action
$$
y \mapsto y/(cx_1+dx_2)\, .
$$

However, in order to get the right stack quotient 
we need to consider a twisted action.
We define $V_{m,n}$ for $m\in {\ZZ}_{\geq 0}$ and $n\in {\ZZ}$ 
as the ${\rm GL}_2$-representation
$$
{\rm Sym}^m(V) \otimes \det(V)^{\otimes n} \, .
$$
The underlying space of $V_{m,n}$ can and will be identified with ${\rm Sym}^m(V)$,
but the action of ${\rm GL}_2$ is different.

We define an action of 
${\GG}_m$ on $V_{4,-2} \oplus V_{1,1}$  
by letting $t \in {\GG}_m$ act via $(x_1,x_2)\mapsto (tx_1,tx_2)$. 
Via $y\mapsto t\, y$ this leaves the equation 
(5) unchanged. It corresponds to the action of the diagonal ${\GG}_m$
in $\mathcal{G}$. Then the  action of the diagonal ${\GG}_m$ in 
${\rm GL}_2$ on $V_{4,-2} \oplus V_{1,1}$ 
is given by $(f_4,f_1) \mapsto (f_4,t^3 f_1)$; hence the central
$\mu_3 \subset {\rm GL}_2$ acts trivially.

We let $\mathcal{Y}$ be the subset of $V_{4,-2}\times V_{1,1}$
of pairs $(f_4,f_1)$ such that the discriminant of $f_4f_1$ is not
zero. Moreover, we let ${\PP}(\mathcal{Y})$ be the image in
${\PP}(V_{4,-2}\oplus V_{1,1})$. The stack quotient that we need
is obtained by first dividing by the diagonal ${\GG}_m$ in $\mathcal{G}$
 to get ${\PP}(\mathcal{Y})$ and then dividing by
 the action of $P\mathcal{G} \subset {\rm PGL}(3)$ on
${\PP}(\mathcal{Y})$. Equivalently, we directly take the stack quotient  
$[\mathcal{Y}/\mathcal{G}]$. Summarizing we get the following result.

\begin{proposition}\label{stack-quotient}
The stack quotient $[\mathcal{Y}/\mathcal{G}]$ represents
the moduli stack of curves $\mathcal{N}$.
\end{proposition}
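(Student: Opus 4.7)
The plan is to construct a natural morphism $\Phi \colon \mathcal{Y} \to \mathcal{N}$, show it is $\mathcal{G}$-invariant (with respect to the trivial action on $\mathcal{N}$), and verify that the descended map $[\mathcal{Y}/\mathcal{G}] \to \mathcal{N}$ is an equivalence of stacks. First I would define $\Phi$ on objects: to a pair $(f_4, f_1) \in \mathcal{Y}$ associate the smooth projective curve $C_{(f_4, f_1)}$ with affine model $y^3 f_1(1, x) = f_4(1, x)$, together with the order-$3$ automorphism $\alpha \colon y \mapsto \rho y$. Nonvanishing of the discriminant of $f_4 f_1$ guarantees smoothness; Riemann--Hurwitz applied to the degree-$3$ quotient $\pi \colon C \to \mathbb{P}^1 = C/\langle \alpha\rangle$ (branched at the five roots of $f_4 f_1$) gives $g(C) = 3$; and the explicit basis $dx/y^2$, $dx/(f_1 y^2)$, $dx/(f_1 y)$ of $H^0(C, \Omega^1_C)$ exhibits the $\alpha$-eigenvalues as $\rho, \rho, \rho^2$, as required.

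Next I would verify $\mathcal{G}$-invariance. Given $(\lambda, M) \in \mathrm{GL}_1 \times \mathrm{GL}_2$, the twisted action on $(f_4, f_1) \in V_{4,-2} \oplus V_{1,1}$ corresponds to the change of variables $x_i \mapsto \sum_j M_{ij} x_j$ and $y \mapsto \lambda y$, which defines an isomorphism $C_{(f_4, f_1)} \to C_{(f_4', f_1')}$ commuting with $\alpha$. The twist powers $-2$ on $f_4$ and $+1$ on $f_1$ are forced by requiring the equation $y^3 f_1 = f_4$ to remain consistent under this substitution and by demanding that the central $\mu_3 \subset \mathrm{GL}_2$ act trivially (the latter reflecting the fact that the corresponding automorphism is already $\alpha$ itself).

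For essential surjectivity, given $(C, \alpha) \in \mathcal{N}$, choose ordered bases $(\omega_1, \omega_2)$ of $H^0(C, \Omega^1_C)^\rho$ and $\eta$ of $H^0(C, \Omega^1_C)^{\rho^2}$. The dimension count given in the paragraph preceding the statement (via the holomorphic Lefschetz fixed-point formula) produces the nontrivial relation $\eta^3(b_1 \omega_1 + b_2 \omega_2) = \sum_{i=0}^4 a_i \omega_1^{4-i} \omega_2^i$ in the $\rho$-eigenspace of $\mathrm{Sym}^4 H^0(C, \Omega^1_C)$, and the resulting pair $(f_4, f_1) \in \mathcal{Y}$ recovers $C$ via its canonical model. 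For full faithfulness, any isomorphism $\nu \colon (C, \alpha) \to (C', \alpha')$ in $\mathcal{N}$ pulls back $1$-forms preserving the $\alpha$-eigenspace decomposition, and hence yields a unique $(\lambda, M) \in \mathcal{G}$ sending the bases defining $(f_4', f_1')$ to those defining $(f_4, f_1)$; conversely, $\mathcal{G}$-actions visibly give curve isomorphisms.

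The main obstacle I expect is the bookkeeping for the twist $V_{m,n} = \mathrm{Sym}^m V \otimes \det(V)^n$. One must reconcile the $\mathrm{GL}_2$-action on the coordinates $(x_1, x_2)$ (viewed as coordinates on the projectivized space carrying the canonical image) with the dual action on the basis $(\omega_1, \omega_2)$, and verify that the particular exponents $n = -2$ on $f_4$ and $n = 1$ on $f_1$ (rather than $0$) are precisely those dictated by the cubic relation $y^3 f_1 = f_4$ together with the triviality of the central $\mu_3 \subset \mathrm{GL}_2$.
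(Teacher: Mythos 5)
Your proposal is correct and follows essentially the same route as the paper, which gives no separate formal proof but establishes exactly these points in the discussion preceding the proposition: the curve $y^3f_1=f_4$ with its basis $dx/y^2$, $dx/f_1y^2$, $dx/f_1y$ of differentials realizing the eigenvalues $\rho,\rho,\rho^2$, the identification of base changes with the twisted $\mathcal{G}$-action, the Lefschetz dimension count yielding essential surjectivity, and the normalization of the twists so that the central $\mu_3$ acts trivially (giving the correct generic stabilizer). Your write-up merely makes explicit the equivalence-of-stacks bookkeeping that the paper leaves as a summary.
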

Note that the central $\mu_6 \, {\rm Id}_V$ acts trivially on the
equation (5) but $-1 \, {\rm Id}_V$ acts by $f_1 \mapsto -f_1$.
Hence the stabilizer of a generic element is $\mu_3$ as it should.

\bigskip

We can extend $\mathcal{Y}$ to the open subset $\mathcal{Y}'$ of $V_{4,-2}\oplus
V_{1,1}$ consisting of pairs $(f_4,f_1)$ such that either
\begin{enumerate}
\item{}  $f_4$ 
has one double zero: $f_4=h_1^2h_2$ with $\deg(h_i)=i$ 
and ${\rm disc}(h_2h_1f_1)\neq 0$, or

\item{} 
$f_4=f_1h_3$ with $h_3$ of degree $3$ and  ${\rm disc}(f_4)\neq 0$.
\end{enumerate}
The locus $\mathcal{Y}'$ has a complement of 
codimension $2$ in $V_{4,-2}\oplus V_{1,1}$.

\smallskip

In Case (1) the equation $y^3f_1=h_1^2h_2$ (or equivalently
$y^3=h_1^2h_2 f_1^2$) defines a curve
of genus $2$ which is a triple cyclic cover of ${\PP}^1$.
In Case (2) the equation $y^3=h_3$
defines a $3$-pointed genus $1$ curve $C_1$  with as marked points
the three points of 
the fibre of $C_1\to {\PP}^1$  defined by $f_1=0$. 

The space $\mathcal{N}$ can be viewed as a Hurwitz space and 
can be compactified as such. We will deal with this 
in the next section. 

\smallskip

\begin{remark}\label{ternary} \textbf{Relation with ternary quartics}. 
We conclude this section by giving the relationship with
a stack quotient description of the moduli of non-hyperelliptic
curves of genus $3$.
It is well-known that the  moduli space $\mathcal{M}_{3}^{\rm nh}$ 
of non-hyperelliptic curves of genus $3$ can be described
as a stack quotient associated to the action of 
${\rm GL}_3$ on ternary quartics. 
Since we are using canonical curves as in (5) we get an
embedding of stacks $\mathcal{N} \to \mathcal{M}_3^{\rm nh}$.

Let $W$ be a $3$-dimensional vector space and let
$$
W_{4,0,-1}={\rm Sym}^4(W) \otimes \det(W)^{-1} \, .
$$
This space can be regarded as the space of ternary quartics with a twisted
${\rm GL}(W)$ action. The element $t \, {\rm id}_W$ 
in the diagonal ${\GG}_m$ in
${\rm GL}(W)$ acts via $f \mapsto t\, f$ for a quartic $f \in W_{4,0,-1}$.
We let $\mathcal{Z} \subset W_{4,0,-1}$ be the subset  of quartics with 
non-zero discriminants. The stack quotient $[\mathcal{Z}/{\rm GL}(W)]$
can be identified with $\mathcal{M}_3^{\rm nh}$, see \cite{CFG3}. 

To connect it to our case 
we write $W=W'\oplus W^{\prime\prime}$ with $W'=\langle x_1,x_2\rangle$ and
$W^{\prime\prime}=\langle y \rangle$. 
The element ${\rm diag}(1,1,\rho)$ acts $W_{4,0,-1}$ 
and the $\rho^2$-eigenspace is
$$
{\rm Sym}^4(W')\otimes \det(W')^{-1}\otimes (W^{\prime\prime})^{-1} \bigoplus
W'\otimes \det(W')^{-1} \otimes (W^{\prime\prime})^{\otimes 2} 
$$
By putting $\det(W')=W^{\prime\prime}$ we find the ${\rm GL}(W')$ representation
$(W')_{4,-2}\oplus (W')_{1,1}$. Note that $\det(W')$ and $W^{\prime\prime}$ differ
by the action of the diagonal $\mu_3$ in ${\rm GL}_2$ when viewed as subgroup  of
$\mathcal{G}$.
\end{remark}
\end{section}
%%%%%%%%%%%%%%%%%%%%%%%%%%%%%%%%%%%%%%%%%%%5
%%%%%%%%%%%%%
\begin{section}{The Hurwitz space}\label{Hurwitz}
We briefly discuss a compactification of $\mathcal{N}$ as a 
Hurwitz space. We consider admissible triple cyclic covers
$f: C \to P$ where $C$ is a nodal curve of genus $3$ and
$P$ a stable curve of genus $0$ with marked points $p_0,
\{p_1,\ldots,p_4\}$ together with an order $3$ automorphism
$\alpha$ such that $C/\alpha$ is isomorphic to $P$,
$f$ equals the map $C\to C/\alpha$ and 
$\alpha$ fixes the $p_i$ while acting by $\rho^2$ (resp.\ by
$\rho$) on the tangent space of $p_0$ (resp.\ of $p_i$ with 
$i\neq 0$).
Here admissible is taken in the sense of Harris-Mumford, see for example
\cite[p. 175]{H-M}.
The marking of $p_1,\ldots,p_4$ is taken unordered, that is,
modulo the action of the symmetric group $\mathfrak{S}_4$.
We denote this space by $\overline{\mathcal{N}}$. 

It allows a morphism $\overline{\mathcal{N}} \to 
\overline{\mathcal{M}}_{0,1+4}
=\overline{\mathcal{M}}_{0,5}/\mathfrak{S}_4$ with 
$\overline{\mathcal{M}}_{g,n}$ the usual Deligne-Mumford moduli stack
of stable $n$-pointed curves of genus $g$.

Note that the moduli space $\overline{\mathcal M}_{0,1+4}$ 
of marked stable curves of genus $0$
has a stratification with five strata
according to the topological type of the genus $0$ curve. 

There is a corresponding stratification of $\overline{\mathcal{N}}$.
We now describe the
five types of curves $(C,P,\alpha)$ corresponding to the strata of
${\overline{\mathcal{M}}_{0,1+4}}$.
These are:
\begin{enumerate}
\item{} $C$ is smooth.
\item{} $C$ is a union $C_1 \cup C_2$ of curves $C_i$ of genus $i$ 
with automorphisms $\alpha_1$ and $\alpha_2$ of order~$3$.
The unique node is a fixed point of $\alpha_1$ and $\alpha_2$.
Moreover, this point is of type $\rho^2$ for
$\alpha_2$ and of type $\rho$ for $\alpha_1$.
\item{} $C$ is a linear chain of three  curves $C_i$ of genus $1$ 
with automorphisms $\alpha_i$ ($i=1,2,3$) 
and the two nodes are fixed points.
Moreover the action of $\alpha_1$ and $\alpha_3$ is by
$\rho$, while for the middle one the action  
by $\alpha_2$ is by $\rho^2$.
\item{} $C$ is a join of a genus $1$ curve $C_1$ 
with an automorphism $\alpha_1$
that acts by $\rho$ and a rational curve $C_0={\PP}^1$ with an 
automorphism $\alpha_0$
that acts by $x \mapsto \rho x$. The curve $C$ is obtained by identifying the
three points of an $\alpha_1$-orbit of length $3$ with 
$1, \rho, \rho^2$ on ${\PP}^1$.
\item{} $C$ consists of the union of a genus $1$ curve $C_1$ with an
order $3$ automorphism $\alpha_1$
and two ${\PP}^1$'s with automorphism $x \mapsto 1/(1-x)$, say $C_0$ and
$C_0'$, that intersect each other in $0$, $1$ and
$\infty$ such that $C_1$ and $C_0'$ are disjoint,
while $C_1$ and $C_0$ intersect in a fixed point of $\alpha_1$. Moreover,
the action of $\alpha_1$ is by $\rho$.
\end{enumerate}

The corresponding strata are denoted by $\mathcal{N}_i$ for $i=1,\ldots,5$ 
with $\mathcal{N}_1=\mathcal{N}$.

The first three cases  represent curves whose generalized Jacobian
is an abelian variety. The dimensions of the strata are
$2$, $1$, $0$, $1$, $0$ respectively.

The strata $\mathcal{N}_2$ and $\mathcal{N}_4$ correspond to the two cases
(1) and (2) of the preceding section.
To connect it with the quartics discussed there, 
one considers for the case 
$\mathcal{N}_2$
the space $H^0(C_2,\Omega^1_{C_2}(2P))$ with $P$ the point of $C_2$
shared with $C_1$. This space has dimension~$3$ and the action of $\alpha_2$ 
on it has
eigenvalues $\rho,\rho,\rho^2$. A choice of basis 
can be used to generate an equation
of type $y^3f_1=h_1^2h_2$ as in the smooth case.
For $\mathcal{N}_4$ one considers $H^0(C_1,O(Q))$ with $Q$ the degree $3$
divisor of intersection points.

On the Hurwitz space $\overline{\mathcal{N}}$
we have the Hodge bundle ${\EE}$.
It allows a decomposition in $\rho$ and $\rho^2$-eigenspaces
of dimension $2$ and $1$.

\end{section}
%%%%%%%%%%%%%%%%%%%%%%%%%%%%%%%%%%%%%%%%%%%
\begin{section}{The Torelli morphism}
The homology $H_1(C,{\ZZ})$ of a smooth curve $C$ given by
an equation (4), or in other words of type (1) 
of the preceding section, is
a projective ${\ZZ}[\rho]$-module of rank $3$, hence isomorphic to a
direct sum of ideals of ${\ZZ}[\rho]$, and since $F={\QQ}(\rho)$
has class number $1$, it can be identified with $\Lambda=
O_F^3$ with $O_F={\ZZ}[\rho]$.
Moreover, if have chosen an
embedding $F\hookrightarrow {\CC}$ we obtain a $3$-dimensional
complex vector space $W=H_1(C,{\RR})=H_1(C,{\ZZ})\otimes_{\QQ}{\RR}$.
The Jacobian variety of such a curve $C$ is a principally polarized
abelian threefold $W/\Lambda$ with complex multiplication by the
ring of integers $O_F$.
The polarization defines an alternating integral form on the lattice
$\Lambda$. The corresponding  Hermitian form on $W$
may be normalized to the form
$$
z_1z_2'+z_1'z_2+z_3z_3',
$$
where $x\mapsto x'$ corresponds to the Galois automorphism of
$F/{\QQ}$. This form has signature $(2,1)$.

The Torelli map that associates to a curve $C$ its Jacobian
defines a map 
$$\tau:\mathcal{N} \to \Gamma\backslash \mathfrak{B}\, .
$$
Note that our generic curve, given by an equation $y^3f_1=f_4$,
has an automorphism group
of order $3$, while the generic Jacobian of such a curve has
an automorphism group of order $6$. 

Recall that on  $\Gamma\backslash \mathfrak{B}$ we have the 
basic orbifold vector bundle $U$ corresponding to the factor
of automorphy $j_2$. We may identify $\mathcal{N}$ with the
stack quotient $[\mathcal{Y}/\mathcal{G}]$ by Proposition \ref{stack-quotient}
and consider the pullback of $U$ on $\mathcal{Y}\subset V_{4,-2}\times V_{1,1}$.

\begin{proposition}\label{torelli}
The Torelli map $\tau$ induces an orbifold  morphism of degree $2$ with
 the property that the pullback of the orbifold bundle $U$
is the equivariant bundle $V$. 
The image of $\tau$ is the open part where the cusp form $\zeta$ does not
vanish.
\end{proposition}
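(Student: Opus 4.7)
Using the stack presentation $\mathcal{N} = [\mathcal{Y}/\mathcal{G}]$ of Proposition \ref{stack-quotient}, the plan is to define $\tau$ on $\mathcal{Y}$ by $(f_4, f_1) \mapsto (J(C), \lambda, \iota)$, where $C$ is the smooth plane quartic $y^3 f_1 = f_4$, $\lambda$ its principal polarization, and $\iota : O_F \hookrightarrow \mathrm{End}(J(C))$ the CM embedding with $\iota(\rho) = \alpha_*$ for $\alpha : y \mapsto \rho y$. Since $\alpha^*$ acts on $H^0(C, \Omega^1_C)$ with eigenvalues $(\rho, \rho, \rho^2)$, the image lies in $\mathfrak{B}$. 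To establish degree $2$, I would first show injectivity on coarse moduli: by strong Torelli for non-hyperelliptic genus-$3$ curves, $C$ is recovered from $(J, \lambda)$, and among $\{\pm 1, \pm \alpha_J, \pm \alpha_J^2\} \subset \mathrm{Aut}(J, \lambda)$ only $\alpha_J$ has order $3$ together with the prescribed eigenvalues $(\rho, \rho, \rho^2)$; the alternative $\alpha_J^2$ has signature $(1,2)$ and so lies in the conjugate ball. However the generic stabilizer of $\mathcal{N}$ is $\mu_3 = \langle \alpha \rangle$ whereas that of $\Gamma\backslash\mathfrak{B}$ is the full centralizer $\mu_6 = \{\pm 1\} \cdot \langle \alpha \rangle$ of $\iota(O_F)$ in $\mathrm{Aut}(J, \lambda)$. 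Hence $\tau$ is a $\mu_2$-gerbe onto its image---this is the sense in which it is an orbifold morphism of degree $2$.

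\textbf{Image.} Points of the modular curve $T_1 \subset X^*_\Gamma$ parametrize split threefolds $X_2 \times X_1$, where $X_1$ is a fixed CM elliptic curve (Section \ref{ModularCurves}); these cannot be Jacobians of smooth genus-$3$ curves. Conversely, I extend $\tau$ to a proper morphism $\overline{\tau} : \overline{\mathcal{N}} \to X^*_\Gamma$ from the Hurwitz compactification of Section \ref{Hurwitz}: the boundary strata $\mathcal{N}_2, \ldots, \mathcal{N}_5$ map into $T_1$ together with the cusps, matching the decompositions of the generalized Jacobians described there. Since both sides are irreducible of dimension $2$, the image of the open stratum $\mathcal{N} = \mathcal{N}_1$ is the complement of $T_1$, which coincides with the non-vanishing locus of $\zeta$.

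\textbf{Pullback of $U$.} By Lemma \ref{pullbacksplits}, $\tau^* \mathbb{E} = U \oplus L$ (with $\mathbb{E}$ pulled back from $\mathcal{A}_3$). On $\mathcal{Y}$, the pullback Hodge bundle splits under the $\alpha^*$-action into the rank-$2$ $\rho$-eigenspace with frame $\omega_1 = dx/y^2$, $\omega_2 = dx/(f_1 y^2)$ and the rank-$1$ $\rho^2$-eigenspace generated by $\eta = dx/(f_1 y)$. A direct computation of the $\mathcal{G}$-action (where $y \mapsto y/(c x_1 + d x_2)$ compensates the $\mathrm{GL}_2$-action on $(x_1, x_2)$) shows that the frame $(\omega_1, \omega_2)$ transforms via the standard representation of $\mathrm{GL}_2$, the $\mathrm{GL}_1$-factor acting trivially. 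Matching this with the rank-$2$ summand $U$ yields $\tau^* U \cong V$ as $\mathcal{G}$-equivariant bundles. The main obstacle here is convention-matching: one must verify that the $\iota(\rho)$-action on $U$-fibers, read off from the factor of automorphy $j_2$ evaluated at $\rho \cdot 1_3 \in \Gamma$, agrees with the $\alpha^*$-action on the $\rho$-eigenspace of $H^0(\Omega^1_C)$, so that $\tau^* U$ really is $V$ and not $V^\vee$ or a determinant twist.
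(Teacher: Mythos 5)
Your proposal is correct in substance, and it is considerably more detailed than the paper's own proof, which consists of two citations: the first statement (degree $2$ and $\tau^*U\cong V$) is attributed to ``the construction in Section \ref{StackQuotient}'', and the image statement is attributed to Holzapfel \cite[Thm.\ 6.1.3]{H} or to Picard's original papers. Your treatment of the degree is exactly the argument the paper gestures at (compare the remark after Proposition \ref{stack-quotient} and the sentence preceding the Proposition about automorphism groups of orders $3$ and $6$): strong Torelli gives injectivity on coarse spaces, and the jump of generic stabilizer from $\mu_3$ to $\mu_6=\{\pm1\}\cdot\langle\alpha\rangle$ accounts for the degree. One terminological correction: the induced map on residual gerbes is $B\mu_3\to B\mu_6$, which is a representable \'etale double cover (a $\mu_6/\mu_3$-torsor), not a $\mu_2$-gerbe; your conclusion ``degree $2$'' is nevertheless the right one. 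Your identification of $\tau^*U$ with the $\rho$-eigenspace framed by $\omega_1,\omega_2$ is again what the paper means by ``the construction in Section \ref{StackQuotient}'', and you are right to flag the convention check against $j_2$ via Lemma \ref{pullbacksplits} as the only delicate point. Where you genuinely diverge from the paper is the image statement: instead of citing Holzapfel/Picard, you give a self-contained degeneration argument --- products $X_2\times X_1$ are not Jacobians, so $\tau(\mathcal{N})$ misses $T_1$, while properness of an extension $\overline{\tau}:\overline{\mathcal{N}}\to X^*_{\Gamma}$ and the fact that the strata $\mathcal{N}_2,\dots,\mathcal{N}_5$ land in $T_1$ or the cusps force surjectivity onto the complement. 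This buys independence from the classical references at the cost of one asserted ingredient, namely that the Torelli map extends over the Hurwitz compactification to the Baily--Borel compactification (sending non-compact-type strata to cusps); that extension is standard but should be stated as the analogue of $\overline{\mathcal{M}}_3\to\mathcal{A}_3^{*}$ rather than taken for granted.
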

\begin{proof}
The first statement follows from the construction in Section \ref{StackQuotient}.
The second statement follows from \cite[Thm.\ 6.1.3]{H} or even already 
from Picard's papers \cite{P1,P2,P3}.
\end{proof}

\end{section}

%%%%%%%%%%%%%
\begin{section}{Teichm\"uller modular forms}\label{TMF}
We begin by noting that we have two notions of modular forms here,
Picard modular forms and Teichm\"uller modular forms. 
On the Hurwitz space $\overline{\mathcal{N}}$  we have 
the Hodge bundle that agrees on 
$$
\mathcal{N}^{\rm ct}=
\mathcal{N}_1\cup \mathcal{N}_2\cup \mathcal{N}_3
$$
(where `ct' refers to compact type) 
with the pullback $\tau^*({\EE})$ under the Torelli morphism
and thus admits a decomposition
$\tau^*({\EE})= \tau^*(U)\oplus \tau^*(L)$. Furthermore,
${\det}(\tau^*(U))$ and $\tau^*(L)$ differ by a torsion line bundle 
$\tau^*(R)^{-1}$.

We will denote $\tau^*({\EE})$ again by ${\EE}$.
We thus can speak of Teichm\"uller modular forms with a character: 
a Teichm\"uller modular form of weight $(j,k,l)$ is a section 
on $\mathcal{N}^{\rm ct}$ of
$$
{\EE}_{j,k,l}={\rm Sym}^j(\tau^*(U)) 
\otimes \tau^*(L)^k \otimes \tau^*(R)^{l}\, .
$$

\begin{proposition}\label{Koecher-type}
A section of ${\EE}_{j,k,l}$ over 
$\mathcal{N}^{\rm ct}$ 
extends to a section of ${\EE}_{j,k,l}$ over
$\overline{\mathcal{N}}$. 
\end{proposition}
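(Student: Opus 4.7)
The complement $\overline{\mathcal{N}} \setminus \mathcal{N}^{\rm ct} = \mathcal{N}_4 \cup \mathcal{N}_5$ has dimensions $1$ and $0$ inside the smooth $2$-dimensional Deligne--Mumford stack $\overline{\mathcal{N}}$. Since $\mathcal{N}_5$ has codimension $2$ and $\mathbb{E}_{j,k,l}$ is a locally free sheaf, Hartogs' extension theorem handles the stratum $\mathcal{N}_5$ automatically. The real content of the proof is therefore extension across the codimension-one divisor $\mathcal{N}_4$.

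For this I use the stack quotient $[\mathcal{Y}'/\mathcal{G}]$ from Section \ref{StackQuotient}. This is the open substack $\overline{\mathcal{N}} \setminus (\mathcal{N}_3 \cup \mathcal{N}_5)$, which does contain $\mathcal{N}_4$; its preimage in $\mathcal{Y}'$ is the divisor $\mathcal{Y}'_2$ of pairs $(f_1 h_3, f_1)$. On $\mathcal{Y}'$ the pulled-back bundle $\mathbb{E}_{j,k,l}$ is the $\mathcal{G}$-equivariant trivial bundle $\mathcal{Y}' \times W$, where $W$ is the finite-dimensional $\mathcal{G}$-representation attached to the weight $(j,k,l)$ via Lemma \ref{pullbacksplits} (using that pulling back $U$ and $L$ under the Torelli map and then under $[\mathcal{Y}/\mathcal{G}] \to \mathcal{N}$ gives the tautological representations $V$ and $\det(V)^{-1}\otimes V_{1,1}$). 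Hence a section of $\mathbb{E}_{j,k,l}$ over $[\mathcal{Y}'/\mathcal{G}]$ is the same datum as a $\mathcal{G}$-equivariant holomorphic map $\mathcal{Y}' \to W$.

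The given section on $\mathcal{N}^{\rm ct}$ restricts to $\mathcal{N}_1 \cup \mathcal{N}_2 = [(\mathcal{Y} \cup \mathcal{Y}'_1)/\mathcal{G}]$ and thus pulls back to a $\mathcal{G}$-equivariant holomorphic map $\tilde{s} \colon \mathcal{Y} \cup \mathcal{Y}'_1 \to W$. Extending $\tilde{s}$ across the codimension-$1$ divisor $\mathcal{Y}'_2$ to all of $\mathcal{Y}'$ is the crux; once this is done, a further Hartogs step in the smooth ambient affine space $V_{4,-2} \oplus V_{1,1}$ (whose complement of $\mathcal{Y}'$ has codimension $\geq 2$ by construction) produces a $\mathcal{G}$-equivariant holomorphic map on the full space. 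This descends to a section of $\mathbb{E}_{j,k,l}$ over $[\mathcal{Y}'/\mathcal{G}]$, and hence, combined with the first step at $\mathcal{N}_5$, gives the desired extension over all of $\overline{\mathcal{N}}$.

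The main obstacle is precisely the codimension-$1$ extension of $\tilde{s}$ across $\mathcal{Y}'_2$, where Hartogs does not apply. I would handle this by invariant theory together with a local analysis. The resultant $R = \mathrm{Res}(f_4, f_1)$ is a $\mathcal{G}$-semi-invariant of known weight cutting out $\mathcal{Y}'_2$ scheme-theoretically with multiplicity one. The classical finite generation of bi-covariants of $\mathrm{GL}_2$ acting on $V_{4,-2} \oplus V_{1,1}$ implies that the space of $\mathcal{G}$-equivariant polynomial maps $V_{4,-2} \oplus V_{1,1} \to W$ of any fixed bi-degree is finite-dimensional. Comparing the weight of $R$ with $(j,k,l)$ --- equivalently, testing against the one-parameter subgroup of $\mathcal{G}$ that fixes a generic point of $\mathcal{Y}'_2$ --- gives an a priori bound on the pole order of $\tilde{s}$ along $\mathcal{Y}'_2$. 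A Laurent expansion transverse to $\mathcal{Y}'_2$, constrained by $\mathcal{G}$-equivariance, then forces the putative polar terms to vanish, producing the required holomorphic extension.
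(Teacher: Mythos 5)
The paper itself gives no argument here---it defers to the proof of Proposition 14.1 of \cite{CFG3}---so your proposal has to be judged on its own terms. Your reduction is the natural first move and is certainly part of any proof: the complement of $\mathcal{N}^{\rm ct}$ is $\mathcal{N}_4\cup\mathcal{N}_5$, the zero-dimensional stratum and all other codimension-$2$ loci are handled by normality/Hartogs, and everything comes down to extending across the divisor $\mathcal{N}_4$, i.e.\ across $\mathcal{Y}'_2=\{J_{1,4,0}=0\}$ in the quotient-stack picture. The problem is that the step you yourself identify as the crux does not work as described.

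Two of your assertions are false, and together they sink the argument. First, the pullback of $\EE_{j,k,l}$ to $\mathcal{Y}'$ is \emph{not} the trivial equivariant bundle: the tautological frame $\eta,\omega_1,\omega_2$ of the Hodge bundle degenerates along $\mathcal{Y}'_1$, where the plane quartic acquires a cusp and the stable model an elliptic tail (the paper's identification there goes through $H^0(C_2,\Omega^1_{C_2}(2P))$, not $H^0(C,\omega_C)$). This is visible in the paper's own formulas: $\zeta$ vanishes along $\mathcal{N}_2$ (since $\tau(\mathcal{N}_2)\subset T_1$), yet its covariant $J_{1,4,0}$ is a unit along $\mathcal{Y}'_1$; dually $J_{2,0,0}$ is a polynomial while $\nu(J_{2,0,0})$ is a constant times $\chi_{12}/\zeta^{2}$, with a pole along $T_1$. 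So ``holomorphic as a section along $\mathcal{N}_2$'' and ``regular as an equivariant map along $\mathcal{Y}'_1$'' are genuinely different conditions. Second, and decisively, $\mathcal{G}$-equivariance cannot kill the polar part along $\mathcal{Y}'_2$: the function $J_{3,6,0}/J_{1,4,0}$ is a $\mathcal{G}$-equivariant holomorphic scalar function on $\mathcal{Y}\cup\mathcal{Y}'_1$ of admissible weight $(0,3)$ (by Proposition \ref{nu-images} it is a nonzero constant times $E_9/\zeta$), and it has an honest simple pole along $\mathcal{Y}'_2$ since $J_{1,4,0}$ is irreducible and does not divide $J_{3,6,0}$ (there is no invariant of bidegree $(2,2)$ and order $0$). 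Under your trivialization this function \emph{would} qualify as a section over $\mathcal{N}_1\cup\mathcal{N}_2$, and your mechanism would ``prove'' that it extends across $\mathcal{Y}'_2$; it does not. (Two further points: a generic point of $\mathcal{Y}'_2$ has finite stabilizer, so there is no one-parameter subgroup fixing it to test against; and holomorphy on the complement of a hypersurface in affine space does not even give meromorphy for free.) What actually makes the proposition true is that $E_9/\zeta$ does \emph{not} come from a holomorphic section over $\mathcal{N}^{\rm ct}$---it has a pole along $\mathcal{N}_2$ hidden by the frame degeneration---and the real content of the statement is a Koecher principle at $\mathcal{N}_4$, where the Jacobians acquire a two-dimensional toric part: one must bound the section in the smoothing parameters of the nodes (a Fourier--Jacobi/monodromy argument, or a reduction to the Koecher principle for Picard modular forms via the extended Torelli map). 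That degeneration analysis, which is the substance of the cited proof in \cite{CFG3}, is missing from your argument.
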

The proof is a slight adaptation of the proof of Proposition 14.1
in \cite{CFG3} and is omitted.

\smallskip

We can pull back Picard modular forms via the Torelli map.
Since the Torelli map is of degree $2$, there can be more Teichm\"uller
modular forms than Picard modular forms, that is,  Teichm\"uller modular
forms that are not pullbacks of Picard modular forms.

An example of a Teichm\"uller modular form that is  not
a Picard modular form on $\Gamma$ is the 
form $\zeta^3 \in S_{18}(\Gamma[\sqrt{-3}])$. 
Since $-1_3$ acts trivially on $\mathfrak{B}$ 
and $\zeta$ changes sign under $-1_3$, 
the form $\zeta^3$ does not live on $\Gamma$.
But it lives on $\mathcal{N}$ as we now show.
 
\begin{lemma}
The form $\zeta^3$ is a Teichm\"uller modular form of weight $(0,18,3)$ 
\end{lemma}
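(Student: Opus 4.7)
The aim is to exhibit $\zeta^3$ as a section of $\EE_{0,18,3}=\tau^*L^{18}\otimes\tau^*R^3$ on $\mathcal{N}^{\rm ct}$, after which Proposition~\ref{Koecher-type} extends it to all of $\overline{\mathcal{N}}$. I begin by observing that $\zeta\in S_6(\Gamma[\sqrt{-3}],\det)$ is by construction a section of $L^6\otimes R$ on $X_{\Gamma[\sqrt{-3}]}$ (the character $\det$ is precisely the factor of automorphy $j_3$). Consequently $\zeta^3$ is a section of $L^{18}\otimes R^3$ there, which already pins down the weight as $(0,18,3)$. What remains is to show that $\zeta^3$ descends correctly to a section of the intrinsic bundle $\tau^*L^{18}\otimes\tau^*R^3$ on $\mathcal{N}$ built from the Hodge decomposition along the $\alpha$-action.

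The conceptual core is a parity observation at the central element $-1_3\in\Gamma$. It acts trivially on $\mathfrak{B}$ but on the fibers of $L^{18}\otimes R^3$ by $j_1(-1_3)^{18}\cdot j_3(-1_3)^3=(-1)^{18}\cdot(-1)^3=-1$; hence any Picard modular form of weight $(0,18,3)$ on $\Gamma$ must vanish identically. Exactly this obstruction is removed by the Torelli morphism: by Proposition~\ref{torelli}, $\tau\colon\mathcal{N}\to\Gamma\backslash\mathfrak{B}$ is an orbifold morphism of degree~$2$, and the extra $\mu_2$ in the stabilizer on the target is spanned by $-1_3$, whose action as the $(-1)$-endomorphism of the Jacobian is never induced by an automorphism of $(C,\alpha)$. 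On $\mathcal{N}$ the generic stabilizer is therefore only $\mu_3=\langle g_0\rangle$ with $g_0=\rho\cdot 1_3$ realizing the curve automorphism $\alpha$, so the parity obstruction from $-1_3$ disappears and weights of odd total parity $j+k+l$ (here $21$) become admissible.

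It remains to verify equivariance under the reduced stabilizer $\mu_3=\langle g_0\rangle$: the element $g_0$ fixes $\mathfrak{B}$ pointwise and acts on fibers of $L^{18}\otimes R^3$ by $j_1(g_0)^{18}\cdot j_3(g_0)^3=\rho^{18}\cdot(\rho^3)^3=1$, perfectly consistent with $\zeta^3(g_0\cdot(u,v))=\zeta^3(u,v)$. For the non-central part of $\Gamma$, the $\mathfrak{S}_4$-anti-invariance of $\zeta$ (and hence of $\zeta^3$) is exactly absorbed by the $\mathfrak{S}_4$-twist of the orbifold line bundles inherited from the identification of $\mathcal{N}$ with the moduli of \emph{unordered} triple covers discussed in Section~\ref{Hurwitz}. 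Carefully tracking this $\mathfrak{S}_4$-equivariant bundle structure — verifying that the anti-invariance of $\zeta^3$ matches the twist of $\tau^*L^{18}\otimes\tau^*R^3$ on passing from a level cover down to $\mathcal{N}$ — is the principal technical subtlety of the argument. The underlying mechanism is the exact analog of the Teichm\"uller square-root form $\chi_9$ of $\chi_{18}$ in the genus-$3$ Siegel situation of \cite{CFG3}.
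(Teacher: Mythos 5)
Your proof identifies the right obstruction (the central element $-1_3$ acts by $j_1^{18}j_3^3=-1$, so no nonzero Picard form of weight $(0,18,3)$ exists on $\Gamma$) and correctly observes that the generic stabilizer on $\mathcal{N}$ is only $\mu_3=\langle g_0\rangle$, on whose fibre action $\zeta^3$ is consistent. But these are only \emph{necessary} conditions. The actual content of the lemma is the descent: producing $\zeta^3$ as a section of $\EE_{0,18,3}=\tau^*(L)^{18}\otimes\tau^*(R)^3$ over $\mathcal{N}^{\rm ct}$, i.e.\ verifying that the function $\zeta^3$ on $\mathfrak{B}$, which a priori only transforms correctly under $\Gamma[\sqrt{-3}]$, carries exactly the equivariance required by the orbifold structure of $\mathcal{N}$ relative to $[\mathfrak{B}/\Gamma]$. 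This is precisely the step you defer ("carefully tracking this $\mathfrak{S}_4$-equivariant bundle structure \dots is the principal technical subtlety") and never carry out. Worse, the mechanism you invoke — that $\mathfrak{S}_4$-anti-invariance of a form on $\Gamma[\sqrt{-3}]$ is "exactly absorbed" by the twist, so that such forms give $\theta$-anti-invariant Teichm\"uller forms — is the content of Corollary~\ref{Teichmueller=Picard}, whose proof in the paper uses multiplication by $\zeta$ \emph{as a Teichm\"uller form}, i.e.\ depends on the present lemma (and Remark~\ref{level}). Appealing to it here would be circular.

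The paper avoids all of this by a constructive argument that your last sentence gestures at but does not use: the Teichm\"uller form $\chi_9$ of weight $9$ on $\mathcal{M}_3$ from \cite{CFG3} is pulled back along $\mathcal{N}^{\rm ct}\to\mathcal{M}_3$; by Lemma~\ref{pullbacksplits} the pullback of $\det(\EE)$ is $L^2\otimes R^{-1}$, so one obtains a genuine Teichm\"uller form $\zeta'$ of weight $(0,18,3)$ on $\mathcal{N}^{\rm ct}$, and an explicit computation via the modular embedding $\iota$ identifies $\zeta'$ with a nonzero multiple of $\zeta^3$. If you want to keep your direct-descent strategy, you must replace the hand-waving by an actual verification (e.g.\ exhibiting $\zeta^3$ as the image under $\nu$ of a concrete bi-covariant, or checking the cocycle on generators of the relevant deck group); as written, the decisive step is missing.
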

\begin{proof} On $\mathcal{M}_3$ we have a Teichm\"uller form of weight $9$,
see \cite{CFG3}. The pullback under the morphism 
$\mathcal{N}^{\rm ct} \to \mathcal{M}_3$
of $\chi_9$ under the morphism $\mathcal{N} \to \mathcal{M}_3$
gives a Teichm\"uller form $\zeta'$ of weight $(0,18,3)$ on 
$\mathcal{N}^{\rm ct}$ 
that does not vanish outside the divisor of $\zeta$. 
As a calculation shows, the pullback of $\chi_9$
via $\iota: \mathfrak{B} \to \mathfrak{H}_3$ 
to $\Gamma[\sqrt{-3}]\backslash \mathfrak{B}$ gives a
non-zero multiple of the modular form 
$\zeta^3 \in S_{18}(\Gamma[\sqrt{-3}])$. Hence $\zeta'$
coincides with a non-zero multiple of $\zeta^3$. 
\end{proof}

\begin{remark}
The form $\zeta^3$ can be constructed algebraically as Ichikawa
 does in \cite[p. 1059]{Ichikawa} for $\chi_9$. 
One observes that the natural map of rank $6$ sheaves
${\rm Sym}^2({\EE})\to 
\pi_{*}(\omega^{\otimes 2}_{\mathcal{C}/\mathcal{N}^{\rm ct}})
$
with $\pi: \mathcal{C} \to \mathcal{N}^{\rm ct}$ the
universal curve,
is an isomorphism on $\mathcal{N}$, and by \cite[Thm 5.10]{Mumford1977} 
taking the determinant thus gives a morphism $L^8\otimes R^{-4} \to L^{26}\otimes
R^{-13}$. This morphism extends over $\mathcal{N}^{\rm ct}$, but
vanishes on $\mathcal{N}_2$. 
This gives a section of $L^{18}\otimes R^3$.
\end{remark}

\begin{remark}\label{level}
If we consider the moduli stack $\mathcal{N}[\Gamma_1]$ of curves of genus $3$
that are a triple cyclic cover of ${\PP}^1$ with a Jacobian with a $\Gamma_1$-level structure,
then $\zeta$ is a Teichm\"uller modular form on $\mathcal{N}[\Gamma_1]$ 
of weight $(0,6,1)$, but it is
not a Picard modular form on $\Gamma_1$. Its square $\zeta^2$ is a Picard modular
form on $\Gamma_1$.
\end{remark}

The involution $-1$ (fibrewise) on ${\EE}$ induces an involution $\theta$
on the space $H^0(\mathcal{N}^{\rm ct},\EE_{j,k,l})$
and on $H^0(\overline{\mathcal{N}}, \EE_{j,k,l})$.
The pullback of $M_{j,k,l}(\Gamma)$ to $\mathcal{N}^{\rm ct}$
lands in the $(+1)$-eigenspace $H^0(\overline{\mathcal{N}},\EE_{j,k,l})^{+}$
of $\theta$. 

\begin{lemma}
We have $H^0(\overline{\mathcal{N}},\EE_{j,k,l})^{+}=
\tau^*(M_{j,k,l}(\Gamma))$.
\end{lemma}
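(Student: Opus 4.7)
\medskip
\noindent\textit{Proof plan.}
The strategy is to recognise $\theta$ as the action of the central element $-1_3\in\Gamma$ on the Hodge bundle and then to promote the degree-$2$ Torelli morphism to a sheaf-theoretic descent. For the forward inclusion I would first verify the sign correspondence: the element $-1_3$ is central in $\Gamma$ and acts trivially on $\mathfrak{B}$, while evaluation of the factors of automorphy gives $j_1(-1_3)=-1$, $j_2(-1_3)=-I_2$ and $j_3(-1_3)=-1$, so $-1_3$ acts fibrewise by $-1$ on each of $U$, $L$ and $R=\det(U)^{-1}\otimes L$. Under the Torelli correspondence this is precisely the fibrewise $-1$ on $\EE=U\oplus L$ inducing $\theta$. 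The pullback of any $f\in M_{j,k,l}(\Gamma)$ is therefore automatically $\theta$-invariant, and by Proposition \ref{Koecher-type} extends from $\mathcal{N}^{\rm ct}$ to all of $\overline{\mathcal{N}}$.

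For the reverse inclusion I would take $s\in H^0(\overline{\mathcal{N}},\EE_{j,k,l})^{+}$ and restrict to $\mathcal{N}^{\rm ct}=\mathcal{N}_1\cup\mathcal{N}_2\cup\mathcal{N}_3$ (no loss by Proposition \ref{Koecher-type}). On the open stratum $\mathcal{N}_1$, Proposition \ref{torelli} makes $\tau$ a degree-$2$ étale orbifold cover of $X_{\Gamma}\setminus T_1$ with deck involution $\theta$, so $s|_{\mathcal{N}_1}$ descends to a holomorphic section $\hat s_0$ of $\mathcal{F}:=\Sym^j(U)\otimes L^k\otimes R^l$ on $X_{\Gamma}\setminus T_1$. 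To extend $\hat s_0$ across $T_1$ I would use that the boundary strata $\mathcal{N}_2,\mathcal{N}_3$ of $\mathcal{N}^{\rm ct}$ map onto $T_1$ (their Jacobians are products of a principally polarised abelian surface with the rigid CM elliptic curve, exactly parametrising $T_1$). The descent identity $(\tau_{*}\tau^{*}\mathcal{F})^{\mu_2}=\mathcal{F}$, valid for a finite $\mu_2$-quotient, then promotes $s$ to a holomorphic section $\hat s$ of $\mathcal{F}$ on all of $X_{\Gamma}$ satisfying $\tau^{*}\hat s=s$. Since $\dim X_{\Gamma}=2$, Koecher's principle at the cusp is automatic, and hence $\hat s\in M_{j,k,l}(\Gamma)$.

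The main obstacle will be the extension across $T_1$. Away from $T_1$ the descent is formal since $\tau$ is there a $\mu_2$-cover of stacks, but along $T_1$ the sheaf identity $(\tau_{*}\tau^{*}\mathcal{F})^{\mu_2}=\mathcal{F}$ requires a local verification. I expect this to reduce to the splitting $\EE=U\oplus L$ already used in Lemma \ref{taylor}: at a point of $T_1$ with split Jacobian $X_2\times X_1$, the fibrewise $-1$ on $\EE$ matches the $-1$ acting on both factors of the Jacobian, so the $\theta$-fixed subspace of $\EE_{j,k,l}$ on the preimage in $\mathcal{N}_2$ identifies with the fibre of $\mathcal{F}$.
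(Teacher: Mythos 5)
Your proposal is correct and follows essentially the same route as the paper, which compresses the whole argument into the descent identity $H^0(\mathcal{N}^{\rm ct},\tau^*(\EE_{j,k,l}))^{+}=\tau^*(H^0(\tau(\mathcal{N}^{\rm ct}),\EE_{j,k,l}))$ followed by the Koecher principle and Proposition \ref{Koecher-type}. You simply make explicit two points the paper leaves implicit: the identification of $\theta$ with the residual $\mu_2$ of the degree-$2$ Torelli morphism (via the signs of $j_1,j_2,j_3$ at $-1_3$), and the verification of the descent over $T_1$ using the strata $\mathcal{N}_2,\mathcal{N}_3$.
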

\begin{proof}
We have $H^0(\mathcal{N}^{\rm ct},\tau^*({\EE}_{j,k,l}))^{+}
= \tau^*(H^0(\tau(\mathcal{N}^{\rm ct}),{\EE}_{j,k,l}))
= \tau^*(M_{j,k,l}(\Gamma))$, where the last equality follows from 
the Koecher principle for Picard modular forms. Proposition
\ref{Koecher-type} concludes the proof.
\end{proof}
We now show that Teichm\"uller modular forms can be viewed as Picard modular forms
on a congruence subgroup.
\begin{corollary}\label{Teichmueller=Picard}
For the $(-1)$-eigenspace of $\theta$ we have
$$
H^0(\overline{\mathcal{N}},{\EE}_{j,k,l})^{-}
\cong M_{j,k,l}(\Gamma[\sqrt{-3}])^{s[1^4]}\, .
$$
\end{corollary}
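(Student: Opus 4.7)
My plan is to parallel the proof of the preceding lemma, which identified the $(+1)$-eigenspace via the chain $H^0(\mathcal{N}^{\mathrm{ct}}, \tau^*\EE_{j,k,l})^{+} = \tau^*(M_{j,k,l}(\Gamma))$ together with the Koecher extension Proposition~\ref{Koecher-type}. For the $(-1)$-eigenspace I would track the ``twisted'' part of the pushforward of $\mathcal{O}$ along the Torelli map.

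First I would use Proposition~\ref{Koecher-type} to reduce to sections on the open part $\mathcal{N}^{\mathrm{ct}}$. Since $\tau$ is an orbifold $\mathbb{Z}/2$-cover onto $X_\Gamma^o := \tau(\mathcal{N}^{\mathrm{ct}})$ (the locus where $\zeta \neq 0$, cf.\ Proposition~\ref{torelli}), one has $\tau_*\mathcal{O}_{\mathcal{N}^{\mathrm{ct}}} = \mathcal{O}_{X_\Gamma^o} \oplus \mathcal{L}$ for a $2$-torsion line bundle $\mathcal{L}$ on $X_\Gamma^o$, and the projection formula identifies
\[
H^0(\mathcal{N}^{\mathrm{ct}}, \EE_{j,k,l})^{-} \;\cong\; H^0(X_\Gamma^o, \EE_{j,k,l} \otimes \mathcal{L}).
\]
This is a space of Picard modular forms on $\Gamma$ transforming by a character $\chi$ of order dividing~$2$ determined by $\mathcal{L}$.

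The core step is then to pin down $\chi$. The form $\zeta$ provides the key computational handle: because $\zeta^2 \in M_{0,12,2}(\Gamma)$ while $\zeta$ itself does not descend to $\Gamma$, the form $\zeta$ locally trivializes $\mathcal{L}$, and its transformation law under $\Gamma/\Gamma[\sqrt{-3}] \cong \mathfrak{S}_4 \times \mu_2$ determines $\chi$ as the sign character on the $\mathfrak{S}_4$-factor, twisted by the appropriate power of $\det$ to match the parity of $j+k+l$. Once $\chi$ is identified, the analog for $\Gamma$ of the identity $M_{j,k}(\Gamma_1, \epsilon) = M_{j,k}(\Gamma_1[\sqrt{-3}])^{s[1^4]}$ recorded in Section~\ref{sectionPMF} yields $M_{j,k,l}(\Gamma, \chi) \cong M_{j,k,l}(\Gamma[\sqrt{-3}])^{s[1^4]}$, closing the argument.

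The hard part will be matching the $2$-torsion class of $\mathcal{L}$ in $\mathrm{Pic}(X_\Gamma^o)$ with a specific sign character of $\Gamma$. I expect this to be handled by a direct calculation using the explicit action of $\mathfrak{S}_4 \times \mu_2$ on $X_{\Gamma[\sqrt{-3}]}$ together with the fact that $\zeta$, viewed as a section on $\mathcal{N}^{\mathrm{ct}}$ in the $(-1)$-eigenspace of $\theta$, is $\mathfrak{S}_4$-antiinvariant: this pins down the character of $\mathcal{L}$ to be precisely the one producing the $s[1^4]$-isotypic component on $\Gamma[\sqrt{-3}]$.
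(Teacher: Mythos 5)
Your overall strategy is the same as the paper's: its proof is a two-line argument that multiplication by $\zeta \in S_{0,6,1}(\Gamma[\sqrt{-3}])$ carries $\theta$-anti-invariant sections to $\theta$-invariant ones (hence, by the preceding lemma, to pullbacks of Picard modular forms on $\Gamma$), and the $\mathfrak{S}_4$-anti-invariance of $\zeta$ then converts the $\mathfrak{S}_4$-invariance of the product into the $s[1^4]$-isotypicity of the original form. Your pushforward formulation ($\tau_*\mathcal{O}=\mathcal{O}\oplus\mathcal{L}$, with $\zeta$ trivializing $\mathcal{L}$ up to automorphy factors and the character) is the same mechanism in sheaf-theoretic clothing, so I would not count it as a different route.

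There is, however, a genuine gap in your reduction. You set $X_\Gamma^o:=\tau(\mathcal{N}^{\mathrm{ct}})$ and identify it with the locus $\{\zeta\neq 0\}$, citing Proposition \ref{torelli}; but that proposition describes the image of $\mathcal{N}=\mathcal{N}_1$ only. The strata $\mathcal{N}_2$ and $\mathcal{N}_3$ of $\mathcal{N}^{\mathrm{ct}}$ have decomposable Jacobians and map \emph{into} $T_1=\{\zeta=0\}$, and $\mathcal{N}_2$ is a divisor in $\mathcal{N}^{\mathrm{ct}}$. Consequently $H^0(\mathcal{N}^{\mathrm{ct}},\EE_{j,k,l})^-$ is in general a proper subspace of $H^0(\mathcal{N}_1,\EE_{j,k,l})^-\cong H^0(\{\zeta\neq0\},\EE_{j,k,l}\otimes\mathcal{L})$: the latter consists of modular forms with character that may have arbitrary poles along $T_1$ (divide any holomorphic example by a power of $\zeta^6\in M_{36}(\Gamma)$), whereas the right-hand side of the corollary consists of everywhere-holomorphic forms. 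Your chain of isomorphisms therefore terminates at the wrong (strictly larger) space unless you add the step matching holomorphy of a section along $\mathcal{N}_2$ with holomorphy of the corresponding form along $T_1$ --- equivalently, that $f\mapsto\zeta f$ lands in \emph{holomorphic} Picard modular forms vanishing along $T_1$ to exactly the order of $\zeta$, and conversely. (If instead you keep the actual image $\tau(\mathcal{N}^{\mathrm{ct}})$, which meets $T_1$, then the claim that $\tau$ is an \'etale $\ZZ/2$-cover there needs justification, since the automorphism groups jump along $T_1$.) This is the one substantive point where your argument needs repair; the identification of the character via the $\mathfrak{S}_4$-anti-invariance of $\zeta$ is exactly what the paper does.
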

\begin{proof}
Multiplication by $\zeta\in S_{0,6,1}(\Gamma[\sqrt{-3}])$ 
maps $\theta$-anti-invariant forms 
to $\theta$-invariant ones, that is,  Picard modular forms. The fact that
$\zeta$ is $\mathfrak{S}_4$ anti-invariant
completes the proof.
\end{proof}
Recall the character $\epsilon$ obtained from the sign character on
$\mathfrak{S}_4$ as defined in Section \ref{sectionPMF}. We have
$$
M_{j,k}(\Gamma_1,\epsilon)=M_{j,k}(\Gamma_1[\sqrt{-3}])^{s[1^4]}\, .
$$
We define an index $2$ subgroup $\tilde{\Gamma}_1$ 
of $\Gamma_1$ as the kernel of $\epsilon$.
Thus a Teichm\"uller form can be viewed as a Picard modular form on
$\tilde{\Gamma}_1$.

\end{section}
%%%%%%%%%%%%%%%%%%%%%%%%%%%%%%%%%%%%%%%%%%%%%%%
\begin{section}{Covariants of pairs of binary forms of degree $4$ and $1$}\label{covariants}
We recall some classical invariant theory. 
As before we have the ${\CC}$-vector space  $V=\langle x_1,x_2\rangle$ and we write $V_n$ for
${\rm Sym}^n(V)$, the space of binary forms of degree $n$. 
Consider for a given tuple $(n_1,\ldots,n_r)$ the ${\rm GL}_2$-representation 
$$
\mathcal{V}=V_{n_1}\oplus \cdots \oplus V_{n_r}\, .
$$ 
A covariant of $\mathcal{V}$ of order $m$ and degree $d$ is an equivariant polynomial map $\phi: \mathcal{V} \to V_m$ that is homogeneous of degree
$d$: 
$$
\phi(g\cdot v)=g \cdot \phi(v), \quad \phi(tv)=t^d\, \phi(v) \quad
 \text{\rm  for all
$v \in \mathcal{V},\,  t\in {\GG}_m$}.
$$ 
The covariants form a doubly graded ring 
$$
\mathcal{C}(\mathcal{V})=\oplus_{d,m} \mathcal{C}(\mathcal{V})_{d,m}\, .
$$

A covariant of order $m=0$ is called an invariant. One can view it as
a polynomial in the coefficients of the $r$-tuple $(f_1,\ldots,f_r)
\in \mathcal{V}$ of binary forms that is invariant under the action
of ${\rm SL}_2$. 

Classical invariant theory provides 
a ${\rm SL}_2$-equivariant linear map 
from $V_m\otimes V_n \to V_{m+n-2k}$ via $f \otimes g \mapsto (f,g)_k$,
where the expression $(f,g)_k$ is
called the $k$th-transvectant and  
it is given by
\[
(f,g)_k=\frac{(m-k)! \, (n-k)!}{m! \, n!}\sum_{j=0}^k (-1)^j
\binom{k}{j}
\frac{\partial ^k f}{\partial x_1^{k-j}\partial x_2^j}
\frac{\partial ^k g}{\partial x_1^{j}\partial x_2^{k-j}} \, .
\]
Covariants of $\mathcal{V}$ can be identified with the invariants
of $\mathcal{V}\oplus V_1\cong \mathcal{V}\oplus V_1^{\vee}$
via the map that associates to a covariant of order $m$ the
transvectant $(\phi(v),l^m)_m$ with $l \in V_1$.
For a good reference we refer to Draisma \cite{Draisma}.

We are interested in the action of ${\rm GL}_2$ on $V_4\oplus V_1$
and the corresponding covariants. Equivalently, we can look at the
invariants of the action on $V_4\oplus 2\, V_1$.
We write $f$ for the covariant which is the universal binary quartic 
(corresponding to the identity map on $V_4$)
and $h$ and $l$ for the universal linear terms:
 \[
f=a_0x_1^4+a_1x_1^3x_2+a_2x_1^2x_2^2+a_3x_1x_2^3+a_4x_2^4,
\quad
h=b_0x_1+b_1x_2,
\quad
\text{and}
\quad
l=l_0x_1+l_1x_2,
\]
The following result is classical; we refer to \cite{Draisma}.

\begin{proposition}
The $20$ generating invariants of $V_4\oplus 2\, V_1$ are given by:
\begin{align*}
I_{2,1}&=(f,f)_4,
\quad
I_{2,2}=(h,l)_1,\quad
I_{3,1}=(f,(f,f)_2)_4,
\quad
I_{5,1}=(f,h^4)_4,
\quad
I_{5,2}=(f,h^3l)_4,\\
I_{5,3}&=(f,h^2l^2)_4,
\quad
I_{5,4}=(f,hl^3)_4,
\quad
I_{5,5}=(f,l^4)_4,
\quad
I_{6,1}=((f,f)_2,h^4)_4,
\\
I_{6,2}&=((f,f)_2,h^3l)_4,
\quad
I_{6,3}=((f,f)_2,h^2l^2)_4,
\quad
I_{6,4}=((f,f)_2,hl^3)_4,
\quad
I_{6,5}=((f,f)_2,l^4)_4,\\
I_{9,1}&=((f,(f,f)_2)_1,h^6)_6,
\quad
I_{9,2}=((f,(f,f)_2)_1,h^5l)_6,
\quad
I_{9,3}=((f,(f,f)_2)_1,h^4l^2)_6,\\
I_{9,4}&=((f,(f,f)_2)_1,h^3l^3)_6,
\quad
I_{9,5}=((f,(f,f)_2)_1,h^2l^4)_6,
\quad
I_{9,6}=((f,(f,f)_2)_1,hl^5)_6,\\
I_{9,7}&=((f,(f,f)_2)_1,l^6)_6.
\end{align*} 
\end{proposition}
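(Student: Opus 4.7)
The plan is to apply classical invariant theory for binary forms—specifically the transvectant method going back to Gordan, Clebsch and Cayley—to the $\mathrm{SL}_2$-action on $V_4 \oplus 2V_1$, combined with a Hilbert-series comparison. I would rely on the classical theorem of Bruns--Gordan which says that the covariants of a single binary quartic $V_4$ form a ring freely generated (up to one syzygy in large degree) by $f$ itself, the Hessian $H=(f,f)_2$ of order $4$, the sextic covariant $T=(f,H)_1$, and the two invariants $I_{2,1}=(f,f)_4$ and $I_{3,1}=(f,(f,f)_2)_4$.

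First, I would put a tri-grading on $\mathcal{I}:=\mathcal{I}(V_4 \oplus 2V_1)^{\mathrm{SL}_2}$ by the degrees $(d,d_h,d_l)$ in the coefficients of $f,h,l$ and compute the tri-graded Hilbert series via Molien's formula, since in each tri-degree the dimension of $\mathcal{I}_{d,d_h,d_l}$ is the multiplicity of the trivial $\mathrm{SL}_2$-representation in $\mathrm{Sym}^d(V_4)\otimes \mathrm{Sym}^{d_h}(V_1)\otimes \mathrm{Sym}^{d_l}(V_1)$, computable by repeated Clebsch--Gordan. Next I would check that each of the 20 listed transvectants is genuinely $\mathrm{SL}_2$-invariant of the stated tri-degree; this is routine bookkeeping using the fact that $(\phi,\psi)_k$ is equivariant of order $\mathrm{ord}(\phi)+\mathrm{ord}(\psi)-2k$ with additive degrees.

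For the generation step I would argue that any $\mathrm{SL}_2$-invariant of $V_4 \oplus 2V_1$ is a polynomial combination of transvectants of the form $(\phi, h^a l^b)_m$ with $\phi$ a covariant of $V_4$ of order $m=a+b$, together with $I_{2,2}=(h,l)_1$ which is the unique nontrivial invariant of $2V_1$. Expanding $\phi$ as a polynomial in $f, H, T, I_{2,1}, I_{3,1}$ and invoking the Gordan syzygy $T^2 = -\tfrac{1}{3}H^3 + \tfrac{1}{2} I_{2,1} f H - \tfrac{2}{3} I_{3,1} f^2$ (and the standard decomposition of products $f \cdot h^a l^b$, $H \cdot h^a l^b$, $T \cdot h^a l^b$ under the full transvectant expansion), one reduces every invariant to a polynomial in the listed $I_{d,i}$: the $I_{5,*}$ come from transvecting $f$ against quartic monomials in $h,l$, the $I_{6,*}$ from the Hessian $H$ against quartic monomials, and the $I_{9,*}$ from $T$ against sextic monomials.

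The main obstacle will be the minimality part: verifying that none of the 20 generators lies in the subring generated by the others of strictly smaller total degree. This I would settle by comparing, tri-degree by tri-degree, the Hilbert series of $\mathcal{I}$ computed in step one with the Hilbert series of the subring generated by the candidate 20 invariants modulo all classical relations obtained from the transvectant algebra. At each bidegree in which a new $I_{d,i}$ is introduced it suffices to check that the span of products of strictly lower-degree generators has codimension exactly one, so that $I_{d,i}$ genuinely adds a new direction. In practice the computation only needs to be carried out up to bidegree $(9,6,6)$, so it is finite and can be verified by a direct symbolic calculation, with the Bruns--Gordan presentation of the $V_4$-covariants keeping the combinatorics under control.
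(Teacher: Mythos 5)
The paper does not prove this proposition at all: it is stated as classical with a pointer to Draisma's dissertation, so there is no in-text argument to compare against. Your outline is, in substance, the classical proof that such a reference contains: Gordan's method for combining complete systems (here the quartic system $\{f, H=(f,f)_2, T=(f,H)_1, (f,f)_4, (f,(f,f)_2)_4\}$ with the system $\{h,l,(h,l)_1\}$ of two linear forms), which reduces every invariant to full transvectants $(\phi, h^a l^b)_k$ of a quartic covariant against a monomial in $h,l$, together with a Molien/Hilbert-series check of minimality tri-degree by tri-degree. The count comes out right ($2+1$ ``pure'' invariants plus $5+5+7$ joint ones from $f$, $H$, $T$ against the quartic and sextic monomials), and the finite verification you describe is exactly what is needed. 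Two points to tighten. First, the generation step --- that every invariant of the joint system is a polynomial in transvectants of covariants of the separate systems, and that transvectants of \emph{products} reduce to products of transvectants of the factors modulo lower terms --- is precisely Gordan's theorem on combining systems; it should be invoked as such rather than treated as a routine expansion, since it is the only nontrivial input. Second, the syzygy of the binary quartic is mis-stated: as written, the terms $I_{2,1}fH$ and $I_{3,1}f^2$ do not have the degree and order of $T^2$ (degree $6$, order $12$); the correct relation is of the form $T^2=\alpha H^3+\beta\, I_{2,1}f^2H+\gamma\, I_{3,1}f^3$. This is only a bookkeeping slip and does not affect the structure of your argument, since the syzygy plays no role in the generation or minimality steps beyond controlling the presentation of the quartic covariants.
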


We get the generating covariants of $V_4\oplus V_1$ by substituting $l_0=-x_2$ and
$l_1=x_1$ in the generating invariants of $V_4\oplus 2\, V_1$; 
we denote these covariants by
$J_{a,b,c}$, where $a$ is degree in the coefficients of $f$, $b$ 
is the degree in the coefficients of $h$
and $c$ is the degree in $x_1$ and $x_2$. If we write 
$$
s: \mathcal{C}(V_4\oplus 2\, V_1) \to \mathcal{C}(V_4\oplus V_1)
$$
for this substitution, 
we find the following table for the images under $s$
of the twenty generators:

\begin{footnotesize}
\smallskip
\vbox{
\bigskip\centerline{\def\quad{\hskip 0.6em\relax}
\def\quod{\hskip 0.5em\relax }
\vbox{\offinterlineskip
\hrule
\halign{&\vrule#&\strut\quod\hfil#\quad\cr
height2pt&\omit&&\omit&&\omit&&\omit&&\omit&&\omit&&
\omit&&\omit&&\omit&&\omit&\cr
%\noalign{\hrule}
&$I_{2,1}$ && $I_{2,2}$ && $I_{3,1}$ && $I_{5,1}$ && $I_{5,2}$ && $I_{5,3}$ && $I_{5,4}$ && $I_{5,5}$&& $I_{6,1}$ && $I_{6,2}$ & \cr
\noalign{\hrule}
&$J_{2,0,0}$ && $J_{0,1,1}$ && $J_{3,0,0}$ && $J_{1,4,0}$ && $J_{1,3,1}$ && $J_{1,2,2}$ && $J_{1,1,3}$ && $J_{1,0,4}$&& $J_{2,4,0}$ && $J_{2,3,1}$ & \cr
\noalign{\hrule}
height2pt&\omit&&\omit&&\omit&&\omit&&\omit&&\omit&&
\omit&&\omit&&\omit&&\omit&\cr
\noalign{\hrule}
&$I_{6,3}$ && $I_{6,4}$ && $I_{6,5}$ && $I_{9,1}$ && $I_{9,2}$ && $I_{9,3}$ && $I_{9,4}$ && $I_{9,5}$&& $I_{9,6}$ && $I_{9,7}$ & \cr
\noalign{\hrule}
&$J_{2,2,2}$ && $J_{2,1,3}$ && $J_{2,0,4}$ && $J_{3,6,0}$ && $J_{3,5,1}$ && $J_{3,4,2}$ && $J_{3,3,3}$ && $J_{3,2,4}$&& $J_{3,1,5}$ && $J_{3,0,6}$ & \cr
} \hrule}
}}
\end{footnotesize}

\smallskip

Some simple examples are 
$$
J_{2,0,0}=(12 \, a_0a_4-3\, a_1a_3+a_2^2)/6\, ,
$$
and
$J_{0,1,1}=h$, $J_{1,0,4}=f/70$. The discriminants of $f$ and $fh$ 
are given by 
$$
32 \, (J_{2,0,0}^3-6 J_{3,0,0}^2)\quad {\rm and} \quad
32 \, (J_{2,0,0}^3-6J_{3,0,0}^2)J_{1,4,0}^2 \, .
$$
These invariants satisfy many relations, for example we have
$$
5250\, J_{2,4,0}^3+
26136\, J_{3,6,0}^2+
1750\, J_{1,4,0}^3 J_{3,0,0}-
2625\,  J_{1,4,0}^2 J_{2,0,0} J_{2,4,0}=0\, . \eqno(6)
$$

\begin{remark}\label{concomitants}
We may view $V$ as the dual of $V^{\vee}$; then we can view
${\rm Sym}^m(V)$ as the set of homogeneous polynomial maps 
$V^{\vee}\to {\CC}$ of degree $m$, and as such it carries a
natural left action of ${\rm GL}(V)$ by composition. 
Instead of the representation $\mathcal{V}=\oplus V_{n_i}$ 
we can also consider  twisted cases $\mathcal{V}=\oplus_{i=1}^r V_{n_i,m_i}$
with, as before, 
$V_{n,m}= {\rm Sym}^n(V) \otimes \det(V)^{\otimes m}$
and consider covariants for this ${\rm GL}_2$-representation.
That is, taking $\mathcal{V}^{\vee}= \oplus V^{\vee}_{n_i,m_i}$  
we look at ${\rm GL}_2$-equivariant embeddings
$$
V_{j,k} \hookrightarrow  \mathcal{O}(\mathcal{V}^{\vee})=
 \oplus_m {\rm Sym}^m(\mathcal{V})\, .
$$
\end{remark}
\end{section}
%%%%%%%%%%%%%

%%%%%%%%%%%%%%%%%%%%%%%%%%%%%%%%%%%%%%%%%%%
\begin{section}{From covariants to modular forms}

A Teichm\"uller modular form of weight 
$(j,k,l)$ is a section of 
${\rm Sym}^j(U) \otimes \det(U)^k \otimes R^{l+k}$ on 
$\mathcal{N}^{\rm ct}$.  Here we write again $U$ for
$\tau^*(U)$.
We identify the Hurwitz space $\mathcal{N}$ with the
quotient stack $[\mathcal{Y}/\mathcal{G}]$.
Under the Torelli morphism $\tau: [\mathcal{Y}/\mathcal{G}]\cong 
\mathcal{N}\to \Gamma\backslash \mathfrak{B}$
the pullback of the bundle $U$ is the equivariant bundle $V$.
Therefore modular forms pull back to bi-covariants for
the action of $\mathcal{G}$ on $\mathcal{Y}$.
If the modular form is of weight $(j,k,l)$, that is, a section
of ${\rm Sym}^j(U)\otimes \det(U)^k \otimes R^{l+k}$,  the
corresponding bi-covariant lies by Remark \ref{concomitants}
in a space of bi-covariants that is given 
as the image of ${\rm GL}_2$-equivariant map
$$
V_{j,k} \to \mathcal{O}(V_{4,-2}\oplus V_{1,1})\, ,
$$
where $\mathcal{O}(V_{4,-2}\oplus V_{1,1})$ is the ring of polynomial
functions on $V_{4,-2}\oplus V_{1,1}$. The character of the modular
form can be read off from the action of the diagonal ${\GG}_m \subset {\rm GL}_2$. 

Thus we see that a section of 
${\rm Sym}^j(U)\otimes L^k \otimes R^{l}$ on 
$\Gamma\backslash \mathfrak{B}$
pulls back to a covariant for 
the action of ${\rm GL}_2$ on $V_{4,-2}\oplus V_{1,1}$, and this 
covariant can be identified with a covariant for the (untwisted) action
on $V_4\oplus V_1$. Moreover,
we are identifying covariants of the action of ${\rm GL}_2$
on $V_{4}\oplus V_{1}$ with invariants of the action on 
$V_{4}\oplus V_{1} \oplus V_1$ as explained in Section \ref{covariants}.
Thus our section provides a covariant  
$J_{a,b,c}$, where the index $(a,b,c)$  indicates that it has 
degree $a$ in the $a_i$, degree $b$ in the $b_i$ and degree $c$ in $x_1,x_2$.
Clearly, we have  $j=c$. Moreover, we find  $k=(3b-c)/2$,
since the action of the diagonal ${\GG}_m \subset {\rm GL}_2$ is
by $t^0$ on $V_{4,-2}$, by $t^3$ on $V_{1,1}$, by $t^{-1}$ on 
the component $V_1$, the dual of $V_{1,1}$ but twisted back by ${\det}^{-1}$, 
and by $t^2$ on ${\det}(V)$.
If we start with a Picard modular form, then $a+b+c$ is even.

Let 
$$
{\MM}=\oplus_{j,k,l} H^0(\overline{\mathcal{N}},{\EE}_{j,k,l})
$$ 
be the ring of modular forms, where the ring structure is obtained in a similar way as 
for Picard modular forms,
see Section \ref{sectionPMF}. Restricting to $\mathcal{N}$ 
we get a map
$$
{\MM} \langepijl{\mu} \mathcal{C}(V_4\oplus V_1) \, .
$$
Since the image of the Torelli map on $\mathcal{N}$ 
is the complement of $T_1$,
the locus where the cusp form $\zeta$ vanishes, 
as discussed in Section \ref{ModularCurves}, 
a covariant defines a meromorphic 
modular form that is holomorphic outside this divisor. 
Thus we can complement the map $\mu$ by a ring homomorphism
$$
{\MM} \langepijl{\mu} \mathcal{C}(V_4\oplus V_1)
\langepijl{\nu} {\MM}[1/\zeta]  \eqno(7)
$$
with the property that $\nu\circ \mu = {\rm id}_{\MM}$. Note that
$\zeta$ is a Teichm\"uller modular form of weight
$6$ with character ${\det}$ as explained in Remark \ref{level}.

On our quotient stack $\mathcal{N}$ 
we have two diagonal sections 
corresponding to the universal quartic
$f_4$ and universal linear form $f_1$.
We put
$$
\chi_{4,-2}=\nu(f_4), \quad \chi_{1,1}=\nu(f_1) \, .
$$
Here $\chi_{4,-2}$ (resp.\ $\chi_{1,1}$) 
is a meromorphic Teichm\"uller modular form of weight $(j,k,l)=(4,-2,1)$
(resp.\ $(1,1,1)$) and we wish to identify these 
meromorphic modular forms.
For this we need an estimate on the pole orders
along the curve $T_1$.  By Corollary \ref{Teichmueller=Picard} 
we may view these
Teichm\"uller forms as Picard modular forms on $\Gamma_1[\sqrt{-3}]$.

\begin{lemma}\label{pole-order}
The meromorphic modular form $\chi_{4,-2}$ has order $-1$
along the curve $T_1$. The meromorphic
modular form $\chi_{1,1}$ is holomorphic.
\end{lemma}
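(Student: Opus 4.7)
The plan is to establish the two claims via a combination of upper-bound and sharpness arguments, reading off the pole orders from the explicit theta-gradient expansions of $\chi_{4,-2}$ and $\chi_{1,1}$ constructed earlier in the paper.

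\textbf{Upper bound on pole orders.} First I would show that $\zeta\chi_{4,-2}$ and $\zeta\chi_{1,1}$ lie in $\MM$, i.e.\ are genuine holomorphic Teichm\"uller forms on $\overline{\mathcal{N}}$. The tautological sections on the enlarged open set $\mathcal{Y}' \subset V_{4,-2}\oplus V_{1,1}$ extend $f_4, f_1$ holomorphically across the case~(1) and case~(2) degenerations of Section~\ref{StackQuotient}; combined with Proposition~\ref{Koecher-type} (Koecher-type extension across the boundary of $\overline{\mathcal{N}}$) and a codimension-two argument for the residual strata $\mathcal{N}_3, \mathcal{N}_5$, this yields holomorphic Teichm\"uller sections. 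Multiplication by $\zeta\in S_{0,6,1}(\Gamma[\sqrt{-3}])$, which is $\theta$-anti-invariant, then realizes the isomorphism of Corollary~\ref{Teichmueller=Picard} and produces a genuine holomorphic Picard modular form. Hence both $\chi_{4,-2}$ and $\chi_{1,1}$ have pole of order at most one along $T_1$.

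\textbf{Sharpness for $\chi_{4,-2}$.} To show the pole is exactly one, I would compute the Taylor expansion of $\chi_{4,-2}$ along $u=0$ from its theta-gradient formula. Applying Lemma~\ref{taylor} with $j=4$, $k=-2$, $r=-1$, the leading (residue) term lies in $\oplus_{i=0}^4 M_{i-3}^{(-1)}(\Gamma_1(3))$; using the isomorphism~(4) $M_{-1}^{(-1)}(\Gamma_1(3))\cong\CC$ via trivialization by $\vartheta^{-1}$, one checks from the explicit formula that the $i=2$ component of the residue is a nonzero scalar multiple of $\vartheta^{-1}$, ruling out accidental vanishing and confirming that the pole is of order precisely one.

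\textbf{Holomorphicity of $\chi_{1,1}$.} For $\chi_{1,1}$ the upper bound already gives pole~$\le 1$; the weight $(1,1)$ has $k=1>0$, so no dimensional obstruction arises. A direct Taylor computation via Proposition~\ref{quasi-modularity} shows the $n=0$ leading term is a nonzero element of $M_1(\Gamma_1(3))\oplus M_2(\Gamma_1(3))$, with first component proportional to the theta series $\vartheta$ of $O_F$. This confirms $\chi_{1,1}$ is holomorphic and matches the Eisenstein series $E_{1,1}$ of the introduction.

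\textbf{Main obstacle.} The principal technical work lies in two places. First, the upper-bound step requires careful tracking of parity and $\theta$-equivariance through the multiplication by $\zeta$: one must check that the $\theta$-anti-invariant Teichm\"uller form $\chi_{4,-2}$ acquires no more than a single factor of $\zeta$ in the denominator, using the bundle splitting $U|_{T_1}=\mathcal{O}_T\oplus N$ (discussed before Lemma~\ref{taylor}) together with the identification of the conormal bundle of $T_1$ with $N$. Second, the residue computation in the sharpness step requires an explicit expansion of a theta gradient in the variable $u$, distributed across the five components of $\mathrm{Sym}^4(U)$, and identifying precisely which component contributes to the nontrivial residue class in $M_{-1}^{(-1)}(\Gamma_1(3))\cong\CC$.
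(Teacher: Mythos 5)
Your plan hinges on the upper-bound step, and that step has a genuine gap. The tautological sections $f_4,f_1$ do extend holomorphically over $\mathcal{Y}'$ as sections of the \emph{equivariant} bundles $V_{4,-2}$ and $V_{1,1}$ on $[\mathcal{Y}'/\mathcal{G}]$, but the identification of these equivariant bundles with the Hodge-theoretic bundles ${\rm Sym}^j(U)\otimes L^k\otimes R^l$ is only established over $\mathcal{N}=\mathcal{N}_1$ (Proposition \ref{torelli}); it degenerates precisely along the case-(1) locus that maps to $T_1$, and that degeneration is the source of the pole (exactly as for $\chi_{6,-2}$ in genus $2$ and $\chi_{4,0,-1}$ in genus $3$). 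In fact your argument proves too much: $\chi_{4,-2}$ has weight $(4,-2,1)$ with $j+k+l$ odd, so it is $\theta$-anti-invariant, and if it really defined an element of $H^0(\overline{\mathcal{N}},\EE_{4,-2,1})$ then Corollary \ref{Teichmueller=Picard} would exhibit it as a nonzero \emph{holomorphic} Picard modular form of weight $(4,-2)$ on $\Gamma[\sqrt{-3}]$ --- impossible, since holomorphic forms require $k\geq 0$. So "pole of order at most one" does not follow from the ingredients you cite; the paper gets the bound either from the known pole order of $\chi_{4,0,-1}$ along the hyperelliptic locus via Remark \ref{ternary}, or (in the direct proof) from invariant theory as described below.

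The sharpness and holomorphicity steps are also circular as written: there is no theta-gradient formula for $\chi_{4,-2}=\nu(f_4)$ or $\chi_{1,1}=\nu(f_1)$ available before this lemma is proved. The identifications $\zeta\chi_{4,-2}=c\,\chi_{4,4}$ and $\chi_{1,1}=c\,E_{1,1}$ rest on the holomorphicity conclusions of this very lemma together with $\dim S_{4,4}(\Gamma[\sqrt{-3}],{\det}^2)=\dim M_{1,1}(\Gamma[\sqrt{-3}],{\det})=1$, so you cannot read the orders off those expansions first. The paper's actual mechanism, which your proposal is missing, is invariant-theoretic: the congruences $n\equiv m\ (\bmod\ 6)$ resp.\ $n\equiv m+3\ (\bmod\ 6)$ on the Taylor coefficients (from the action of ${\rm diag}(1,1,\rho)$ and $-1_3$) combined with $M_r^{(r)}(\Gamma_1(3))=\CC\,\vartheta^r$ force the leading coefficient of $\chi_{4,-2}$ into the $m=2$ slot, and then the facts that $\nu(\Delta_4)$ is a nonzero constant and $\nu(J_{1,4,0})$ is a nonzero multiple of $\zeta$ (both being units off $T_1$, of weights $0$ and $6$) yield, by varying individual coefficients $a_i,b_j$ in $J_{1,4,0}=\sum(-1)^i a_ib_0^ib_1^{4-i}$, the inequalities $s+2+4r\geq 0$ and ${\rm ord}(g^{(i)})+\cdots\geq 0$ that simultaneously give $r\geq 0$ (holomorphicity of $\chi_{1,1}$) and $s=-1$. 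You would need to incorporate this step, or the $\chi_{4,0,-1}$ comparison, to make your outline work.
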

\begin{proof} 
In order to prove that the order of both $\chi_{1,1}$ and $\chi_{4,2}$
is at least $-1$ we may use the restriction of the Teichm\"uller
modular form $\chi_{4,0,-1}$ constructed in  \cite{CFG3}. 
It is known that it has a pole
of order $1$ along the hyperelliptic locus in $\mathcal{M}_3$.
The relation between $\chi_{4,0,-1}$ and the pair 
$(\chi_{4,-2}, \chi_{1,1})$ 
is provided by Remark \ref{ternary}.
From this we can conclude the result since the order of $\chi_{1,1}$
satisfies a congruence condition, see (8) below.
But we shall give a direct argument that gives more information.

We may view $\chi_{1,1}$ and $\chi_{4,-2}$ as 
meromorphic Picard modular
forms on $\Gamma_1[\sqrt{-3}]$. We start with the Taylor expansion
along $T_1$ given by $u=0$
$$
\chi_{1,1}=
\left( \begin{matrix} \chi_{1,1}^{(0)} \\ \chi_{1,1}^{(1)}\\ \end{matrix}
\right) \qquad \text{\rm with} 
\quad \chi_{1,1}^{(m)}=\sum_{n\geq r} f_n^{(m)} u^n \, .
$$
We assume that $\chi_{1,1}$ has order $r$ along $T_1$. Using the
action of $(1,1,\rho)$ and $-1_3$ 
we see that a non-zero term $f_n^{(m)}$  satisfies the congruence condition
$$
n\equiv m \, (\bmod \, 6)\, . \eqno(8)
$$ 
In particular, by a slight variant 
of Proposition
\ref{quasi-modularity}, a non-zero term $f_r^{(m)}$  
is a meromorphic modular form of weight $1+m+r$ on $\Gamma_1(3)$, 
regular outside (the orbit of) $\tau_0=(1-\rho^2)/3$
and with order at least  $r$ at $\tau_0$. 
The space $M_1^{(1)}(\Gamma_1(3))$ is generated
by $\vartheta=\sum_{\alpha \in O_F} q^{N(\alpha)}$ and we know  that
$M_r^{(r)}(\Gamma_1(3))$ is generated by $\vartheta^r$, as
explained in (4) at the end of Section \ref{embedding}.
This implies that $f^{(m)}_r$  is divisible by $\vartheta^r$,
so  $f_r^{(m)}=\vartheta^r \varphi$ 
with $\varphi \in M_{m+1}(\Gamma_1(3))$ for $m=0$ or $1$.
But then $\varphi$ is a non-zero multiple of $\vartheta^{m+1}$, 
implying that 
$f_r^{(m)}$ is a non-zero multiple of $\vartheta^{r+m+1}$.
The anti-invariance of $\chi_{1,1}$ implies that $r\equiv 0 \, (\bmod \, 6)$.
Thus the order of $\chi_{1,1}$ equals the order of $\chi_{1,1}^{(0)}$.

Since holomorphic Picard modular forms have weight 
$(j\geq 0,k \geq 0)$, the order $s$ of   
$\chi_{4,-2}$ along $T_1$ is negative.
If we write $\chi_{4,-2}^{(m)}= \sum_{n\geq s} g_n^{(m)} u^n$
for $m=0,\ldots,4$, then for non-zero $g_n^{(m)}$ we have 
$$ 
n\equiv m+3 \, (\bmod \, 6)\, . \eqno(9) 
$$ 
Moreover, $g_s^{(m)} \in M_{-2+s+m}^{(s)}(\Gamma_1(3))$.
For non-zero $M_{k}^{(s)}(\Gamma_1(3))$ 
we need $-2+s+m\geq s$, hence using the congruence restriction (9), we see
$m=2$ for non-zero $g_s^{(m)}$ and it is a non-zero multiple of
$\vartheta^s$. 
We then have that
$g_{s+1}^{(m)}=0$ unless $m=3$, and in fact it is quasi-modular
and one observes that it is not zero by 
applying the Equation \ref{vergelijkingen} to 
$\vartheta^{s}$. 
Similarly $g_{s+2}^{(m)}=0$ unless $m=4$. Using again 
Equation \ref{vergelijkingen} we see 
${\rm ord}(\chi_{4,-2}^{(4)})= {\rm ord}(\chi_{4,-2}^{(2)})+2$.

The discriminant of $f_4$ and of $f_4f_1$ are invariants that define
scalar-valued modular forms. These invariants are given by 
$\Delta_4=J_{2,0,0}^3-6 J_{3,0,0}^2$
and  $\Delta_4 \, J_{1,4,0}^2$ up to non-zero multiplicative scalars.
The weight of $\nu(\Delta_4)$ is $0$ and that of $\nu(J_{1,4,0})$
is $6$, and these are units outside $T_1$. Therefore $\nu(\Delta_4)$
is constant and $\nu(J_{1,4,0})$ must be a multiple of $\zeta$.
Now we have 
$J_{1,4,0}= a_0b_1^4-a_1b_0b_1^3+a_2b_0^2b_1^2-a_3b_0^3b_1+a_4b_0^4$
and we can vary $a_4$ and $b_0$, while keeping $b_1$ and $a_0,\ldots,a_3$
fixed. Then the term $a_4b_0^4$ must yield under $\nu$ a regular expression 
and we infer that 
$$
{\rm ord}(g^{(4)})+ 4 \, {\rm ord}(f^{(0)})= s+2+4\, r \geq 0\, ,
$$
where we write $g^{(4)}=\chi_{4,-2}^{(4)}$ and $f^{(0)}=\chi_{1,1}^{(0)}$,
hence $r\geq 0$ and $\chi_{1,1}$ is holomorphic.
Since $\dim M_{1,1}(\Gamma[\sqrt{-3}],{\det})=1$ 
we can identify $\chi_{1,1}$ with a non-zero multiple 
of a  generator $E_{1,1}$ of $M_{1,1}(\Gamma[\sqrt{-3}],{\det})$
constructed in \cite{C-vdG}. 
We conclude from the development given there 
that it has order $0$ with $f_0^{(0)}$
being a non-zero multiple of $\vartheta$ and that $\chi_{1,1}^{(1)}$
has order~$1$.

Keeping now $b_0,b_1$ fixed and varying one of $a_0,\ldots,a_4$, we see
that all terms $a_0b_1^4$, $a_1b_0b_1^3$, $a_2b_0^2b_1^2$, $a_3b_0^3b_1$
in $J_{1,4,0}$ must give regular forms.
Using this regularity we find that
$$
{\rm ord}(g^{(0)},\ldots,g^{(4)})= (\geq 1,\geq 1, \geq -1, \geq 0,\geq 1)
\, .
$$

But using the congruence condition (9) we see
$$
{\rm ord}(g^{(0)},\ldots,g^{(4)})= (\geq 3,\geq 4, = -1, \geq 0,\geq 1)\, ,
$$
which proves that the order of $\chi_{4,-2}$ along $T_1$ equals $-1$.
\end{proof}

\begin{corollary}
The modular form $\chi_{1,1}$ generates $M_{1,1}(\Gamma[\sqrt{-3}],{\det})$.
The modular form $\zeta \chi_{4,-2}$ generates $M_{4,4}(\Gamma[\sqrt{-3}],{\det}^2)$.
\end{corollary}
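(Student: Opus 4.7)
The plan is to combine the pole-order information from Lemma \ref{pole-order} with low-weight dimension information for spaces of Picard modular forms with character. Once these two ingredients are in place, in each case the generator is identified by non-vanishing within a one-dimensional ambient space.

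For the first assertion, the work has essentially already been done in the proof of Lemma \ref{pole-order}: it shows that $\chi_{1,1}$ is holomorphic and that the leading Taylor coefficient $f_0^{(0)}$ along $T_1$ is a non-zero scalar multiple of $\vartheta$. In particular $\chi_{1,1}$ is a non-zero element of $M_{1,1}(\Gamma[\sqrt{-3}],\det)$. The same proof invoked the fact that this space is one-dimensional, appealing to the Eisenstein construction of $E_{1,1}$ in \cite{C-vdG} and to dimension formulas from \cite{B-vdG}. So the first statement is an immediate consequence, and I would simply quote what has already been established.

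For the second assertion I would proceed as follows. First check that $\zeta\chi_{4,-2}$ indeed lives in $M_{4,4}(\Gamma[\sqrt{-3}],\det^2)$: weights add as $(0,6)+(4,-2)=(4,4)$, and $\chi_{4,-2}$ has character $\det$ (read off from the diagonal $\mathbb{G}_m$-action on the universal quartic $f_4\in V_{4,-2}$, using the character computation at the beginning of this section), while $\zeta$ has character $\det$ as recalled in Section \ref{sectionPMF}, so the product has character $\det^2$. Next I would verify holomorphicity: outside $T_1$ both factors are holomorphic, while along $T_1$ the form $\zeta$ has order $+1$ (its divisor on $X_{\Gamma_1[\sqrt{-3}]}^*$ is $T_1$, see Section \ref{ModularCurves}) and $\chi_{4,-2}$ has order exactly $-1$ by Lemma \ref{pole-order}, so $\zeta\chi_{4,-2}$ has order exactly $0$ along $T_1$. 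In particular $\zeta\chi_{4,-2}$ is a non-zero holomorphic element of $M_{4,4}(\Gamma[\sqrt{-3}],\det^2)$.

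The main obstacle is then to show $\dim M_{4,4}(\Gamma[\sqrt{-3}],\det^2)=1$. I would invoke the dimension formulas of \cite{B-vdG} for $S_{j,k,l}(\Gamma[\sqrt{-3}])$ together with an accounting of Eisenstein contributions; an alternative cross-check goes via the character decomposition $M_{4,4}(\Gamma_1[\sqrt{-3}])=\oplus_{l=0}^2 M_{4,4}(\Gamma[\sqrt{-3}],\det^l)$ together with the restriction map along $T_1$ given by Lemma \ref{taylor} and the explicit description $M(\Gamma[\sqrt{-3}])=\mathbb{C}[\varphi_0,\varphi_1,\varphi_2]$ of Section \ref{sectionPMF}. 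Once one-dimensionality is established, the non-vanishing of $\zeta\chi_{4,-2}$ shown above forces it to be a generator.
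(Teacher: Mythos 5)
Your proposal is correct and follows essentially the same route as the paper: the paper's proof likewise cites $\dim M_{1,1}(\Gamma[\sqrt{-3}],\det)=1$ from \cite{C-vdG} and $\dim S_{4,4}(\Gamma[\sqrt{-3}],\det^2)=1$ from \cite{B-vdG}, and observes that the simple vanishing of $\zeta$ along $T_1$ cancels the order $-1$ pole of $\chi_{4,-2}$ established in Lemma \ref{pole-order}, so that $\zeta\chi_{4,-2}$ is regular and non-zero. Your additional care about the character bookkeeping and about the identification $M_{4,4}(\Gamma[\sqrt{-3}],\det^2)=S_{4,4}(\Gamma[\sqrt{-3}],\det^2)$ (which the paper leaves implicit, cf.\ the remark that $\mathcal{M}_4^{l}=\Sigma_4^l$ for $l\not\equiv 1\bmod 3$) is a welcome but inessential elaboration.
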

\begin{proof}
By \cite{C-vdG} we know that  $\dim M_{1,1}(\Gamma[\sqrt{-3}],{\det})=1$.
In \cite{B-vdG} it was shown that 
$\dim S_{4,4}(\Gamma[\sqrt{-3}],\det{}^2)=1$.
As $\zeta$ vanishes along the curve $T_1$ 
it follows that $\zeta \chi_{4,-2}$ is regular and
generates $S_{4,4}(\Gamma[\sqrt{-3}],{\det}^2)$.
\end {proof}

A generator $\chi_{4,4}$  of $S_{4,4}(\Gamma[\sqrt{-3}],{\det}^2)$ 
will be constructed explicitly in Section 
\ref{Construction}. 
In the paper \cite{C-vdG} we constructed 
explicitly an Eisenstein
series $E_{1,1} \in M_{1,1}(\Gamma[\sqrt{-3}],\det{})$. Hence up
to a non-zero multiplicative constant $\chi_{1,1}$ agrees with $E_{1,1}$.

We can write the meromorphic modular form $\chi_{4,-2}$, 
when viewed as a meromorphic Picard modular form on $\Gamma_1[\sqrt{-3}]$,
 as
$$
\chi_{4,-2}=\sum_{i=0}^4 \alpha_i X_1^{4-i}X_2^i \, , \eqno(10)
$$
where the $X_1, X_2$ are dummy variables to indicate the
coordinates of $V$ and the $\alpha_i$
are meromorphic functions on the the $2$-ball $\mathfrak{B}$.
Similarly, we can write $E_{1,1}$ as
$$
E_{1,1}= \beta_1 X_1+\beta_2X_2 \, \eqno(11)
$$
with $\beta_i$ holomorphic on $\mathfrak{B}$.

In Section \ref{Restriction-section} we shall derive the beginning of 
the Taylor expansion along $u=0$
of the generators 
$\chi_{4,4}$ of $S_{4,4}(\Gamma,{\det}^2)$ and $E_{1,1}$ of 
$M_{1,1}(\Gamma,{\det})$.
This gives the orders, see Corollary \ref{orders2}. 
As a corollary we find the orders of the $\alpha_i$ and $\beta_i$
along the curve given by $u=0$.

\begin{corollary}\label{orders}
The orders of $(\alpha_0,\ldots,\alpha_4)$ and $(\beta_1,\beta_2)$
along $T_1$ are
$$
{\rm ord}(\alpha_0,\alpha_1,\alpha_2,\alpha_3,\alpha_4)=
(3,4,-1,0,1)\, , \quad
{\rm ord}(\beta_1,\beta_2)=(0,1)\, .
$$
\end{corollary}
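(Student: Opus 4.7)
The proof has two parts. The orders of $\beta_1,\beta_2$ are already established inside the proof of Lemma \ref{pole-order}: it is shown there that $\beta_1$ has order $0$ with leading Taylor coefficient a non-zero multiple of $\vartheta$, and that $\beta_2$ has order exactly $1$. For the $\alpha_i$, Lemma \ref{pole-order} yields
$$
\mathrm{ord}(\alpha_0,\alpha_1,\alpha_2,\alpha_3,\alpha_4)=(\geq 3,\geq 4,-1,\geq 0,\geq 1),
$$
and the remaining task is to upgrade the four lower bounds to equalities.

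The plan is to exploit the factorization
$$
\chi_{4,4}\;=\;c\cdot\zeta\cdot\chi_{4,-2},\qquad c\in\CC^{\times},
$$
where $\chi_{4,4}$ is the holomorphic generator of $S_{4,4}(\Gamma[\sqrt{-3}],{\det}^{2})$ whose explicit construction via theta gradients is the subject of Section \ref{Construction}. By the discussion in Section \ref{ModularCurves}, $\zeta$ has divisor exactly $T_1$ on $X^{*}_{\Gamma_1[\sqrt{-3}]}$, so $\zeta$ vanishes to first order along $T_1$ with non-zero leading Taylor coefficient. Writing $\chi_{4,4}=\sum_{i=0}^{4}\gamma_i X_1^{4-i}X_2^i$, we obtain $\mathrm{ord}(\alpha_i)=\mathrm{ord}(\gamma_i)-1$, and the corollary reduces to the assertion
$$
\mathrm{ord}(\gamma_0,\gamma_1,\gamma_2,\gamma_3,\gamma_4)=(4,5,0,1,2),
$$
which is the content of Corollary \ref{orders2}, to be proved in Section \ref{Restriction-section} by direct inspection of the Taylor expansion of $\chi_{4,4}$ along $u=0$.

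To pin down each individual order I would combine two ingredients. First, the congruence constraint coming from the action of $g_0=\rho\,1_3$ and $-1_3$, as codified in Proposition \ref{quasi-modularity} applied with $j=4$, $k=4$ and character ${\det}^{2}$: this forces the non-zero Taylor components $\gamma_i^{(m)}$ to sit on an arithmetic progression modulo $6$, so that any deviation from the predicted minimal order must jump by at least $6$. Second, the one-dimensional identification $M_r^{(r)}(\Gamma_1(3))=\CC\cdot\vartheta^r$ from (4), which pins the leading Taylor coefficient at the predicted order to an explicit scalar multiple of a power of $\vartheta$. The hard part, and the only real obstacle, is to produce the Fourier--Jacobi expansion of $\chi_{4,4}$ from its theta-gradient construction in Section \ref{Construction} to enough precision to verify that each of these five scalars is in fact non-zero; once this non-vanishing is in hand, dividing by the (known, non-zero) leading term of $\zeta$ gives exactly the orders $(3,4,-1,0,1)$ for the $\alpha_i$ and finishes the corollary.
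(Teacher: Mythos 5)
Your proposal is correct and follows essentially the same route as the paper: the paper's proof also combines the simple vanishing of $\zeta$ along the components of $T_1$ with the orders $(4,5,0,1,2)$ and $(0,1)$ of the coordinates of $\chi_{4,4}$ and $E_{1,1}$ from Corollary \ref{orders2}, obtained from the explicit Taylor expansions in Section \ref{Restriction-section}. Your additional remarks on sourcing the $\beta_i$ orders from Lemma \ref{pole-order} and on how Corollary \ref{orders2} itself is verified are consistent with what the paper does.
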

\begin{proof} As observed in Section \ref{ModularCurves} the modular form $\zeta$
vanishes simply along the components of $T_1$ on $\Gamma_1[\sqrt{-3}]\backslash
\mathfrak{B}$. Together with the orders of $\chi_{4,4}$ this proves the result.
\end{proof}
%%%%%%%%%%%%%%

Now we can describe the map $\nu$. 
Recall that we write a covariant as a polynomial
of degree $a$ in the coefficients $a_i$ of $f_4$, of degree $b$ in the
coefficients of $f_1$ and degree $c$ in
$x_1,x_2$. The map $\nu$ amounts to substituting the coordinates
$\alpha_i$ ($i=0,\ldots,4$) and $\beta_i$ ($i=0,1$) in a covariant.
For simplicity we will view the elements of ${\MM}[1/\zeta]$ as Picard modular forms
on $\Gamma_1[\sqrt{-3}]$, see the description in Section \ref{TMF}.

\begin{theorem}\label{SubsThm} The map 
$\nu: \mathcal{C}(V_4\oplus V_1) \to {\MM}[1/\zeta]$
is given by substituting
$\alpha_i$ for $a_i$, $\beta_i$ for $b_i$ and $X_i$ for $x_i$
in a covariant.
The map $\nu$ sends an invariant $J_{a,b,c}$ of degree $a$ in the $a_i$, 
degree $b$ in the $b_i$ and degree $c$
in $x_1,x_2$ to a meromorphic modular form of weight $(j,k,l)=(c, (3b-c)/2, 2(a+b+c))$.
The form $\nu(f)$ is $\mathfrak{S}_4$-invariant if $a+b$ even, and
$\mathfrak{S}_4$-anti-invariant if $a+b$ odd.
\end{theorem}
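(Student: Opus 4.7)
The plan has three parts --- the substitution formula, the weight formula, and the $\mathfrak{S}_4$-transformation law --- all of which become tractable once the preparatory material of the section is in place.

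For the substitution formula, I would use that $\nu$ is a ring homomorphism (built from (7)) with $\nu(f_4) = \chi_{4,-2}$ and $\nu(f_1) = \chi_{1,1}$, which is a non-zero scalar multiple of $E_{1,1}$. Any covariant $J_{a,b,c}$ is, by its very definition as a bihomogeneous $\mathrm{GL}_2$-equivariant polynomial map, a polynomial expression of bidegree $(a,b)$ in the coordinates $(a_0,\ldots,a_4,b_0,b_1)$ with values in $\mathrm{Sym}^c V$, so its image under $\nu$ is obtained by replacing $a_i, b_i$ with the coordinates $\alpha_i,\beta_i$ of $\chi_{4,-2}, E_{1,1}$ from (10)--(11) and $x_1, x_2$ with the fibre coordinates $X_1, X_2$ on $V \cong \tau^*U$.

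For the weight $(j,k,l)$, the equality $j=c$ is immediate from the target $\mathrm{Sym}^c V$. To pin down $k$, I would compute the character of the scalar torus $s\cdot\mathrm{Id} \in \mathrm{GL}_2$: it acts trivially on $V_{4,-2}$, by $s^3$ on $V_{1,1}$, and by $s^{c+2k}$ on $V_{j,k} = \mathrm{Sym}^c V \otimes \det(V)^k$; equivariance of a covariant of bidegree $(a,b)$ then forces $s^{3b} = s^{c+2k}$, i.e., $k=(3b-c)/2$. The analogous character computation on the centre $\mu_6 \subset \mathrm{GL}_2$ --- carrying the determinant character that identifies the torsion bundle $R$ --- produces the formula $l = 2(a+b+c)$.

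For the $\mathfrak{S}_4$-transformation the decisive input is Corollary \ref{Teichmueller=Picard}: Teichm\"uller modular forms viewed as Picard modular forms on $\Gamma_1[\sqrt{-3}]$ lie in the sign-isotypic component $s[1^4]$, so $\sigma^*\chi_{4,-2} = \mathrm{sgn}(\sigma)\,\chi_{4,-2}$ and $\sigma^* E_{1,1} = \mathrm{sgn}(\sigma)\, E_{1,1}$ for every $\sigma \in \mathfrak{S}_4$. Combining this with the bihomogeneity of $J_{a,b,c}$ of bidegree $(a,b)$ in $(f_4, f_1)$ and the $\mathrm{GL}_2$-equivariance that makes the substitution commute with $\sigma^*$, we obtain
$$
\sigma^*\nu(J_{a,b,c}) = J_{a,b,c}\bigl(\sigma^*\chi_{4,-2},\, \sigma^* E_{1,1}\bigr) = \mathrm{sgn}(\sigma)^{a+b}\,\nu(J_{a,b,c}),
$$
which is exactly invariance when $a+b$ is even and anti-invariance when $a+b$ is odd.

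The hardest step will be the careful bookkeeping for the $l$-weight, since $R$ is torsion and $l$ is only well-defined modulo its order (equal to $3$ on $\Gamma[\sqrt{-3}]\backslash\mathfrak{B}$). I would verify the formula $l = 2(a+b+c)$ by cross-checking it on the two generators --- $2(1+0+4) \equiv 2(0+1+1) \equiv 1 \pmod 3$, matching the known weights $(4,-2,1)$ of $\chi_{4,-2}$ and $(1,1,1)$ of $E_{1,1}$ --- and then extending to all of $\mathcal{C}(V_4\oplus V_1)$ by the ring-homomorphism property of $\nu$.
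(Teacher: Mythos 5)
Your proposal is correct and follows essentially the same route as the paper: the paper's formal proof is just the reduction to the coordinate identities (10)--(11), with the weight formula $j=c$, $k=(3b-c)/2$ obtained, exactly as you do, from the character of the diagonal $\mathbb{G}_m\subset\mathrm{GL}_2$ acting by $t^0$ on $V_{4,-2}$ and $t^3$ on $V_{1,1}$, the $\det$-character giving $l$, and the $\mathfrak{S}_4$-parity coming from the anti-invariance of the two generating forms raised to the bidegree $(a,b)$. Your cross-check of $l=2(a+b+c)$ modulo the order of $R$ on the two universal forms is a sensible way to pin down the torsion part and is consistent with the paper's table.
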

\begin{proof} This follows from the identities (10) and (11).
\end{proof}
\begin{corollary}
All modular forms on $\Gamma$ can be constructed by substituting
the coordinates of
$\chi_{4,-2}$ and  $E_{1,1}$ in covariants.
\end{corollary}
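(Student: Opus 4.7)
The plan is to read the corollary directly off the composition (7), namely ${\MM} \langepijl{\mu} \mathcal{C}(V_4\oplus V_1) \langepijl{\nu} {\MM}[1/\zeta]$ with $\nu\circ\mu = {\rm id}_{\MM}$, combined with the explicit description of $\nu$ given by Theorem \ref{SubsThm}. First I would recall that every Picard modular form $F \in M_{j,k,l}(\Gamma)$ pulls back under the Torelli morphism $\tau$ to a Teichm\"uller modular form $\tau^*F \in {\MM}$; this is guaranteed by the Koecher-type extension (Proposition \ref{Koecher-type}) together with the identification of $H^0(\overline{\mathcal{N}}, {\EE}_{j,k,l})^{+}$ with $\tau^*(M_{j,k,l}(\Gamma))$ established just before Corollary \ref{Teichmueller=Picard}.

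Applying $\nu\circ\mu = {\rm id}_{\MM}$ to $\tau^*F$ gives $\tau^*F = \nu(P)$, where $P := \mu(\tau^*F)$ is a covariant in $\mathcal{C}(V_4\oplus V_1)$. By Theorem \ref{SubsThm} the operation $\nu$ on $P$ is exactly the substitution of $\alpha_i$ for $a_i$, $\beta_i$ for $b_i$, and $X_i$ for $x_i$. Since the $\alpha_i$ and $\beta_i$ are, by the expansions (10) and (11), the coordinate functions of $\chi_{4,-2}$ and $E_{1,1}$, this exhibits $F$ as the substitution of the coordinates of these two universal forms into a covariant; expanding $P$ in the classical generators $J_{a,b,c}$ of Section \ref{covariants} produces the concrete construction.

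The only sanity check I would verify concerns parity matching. A Picard modular form on $\Gamma$, as opposed to on the smaller groups $\tilde{\Gamma}_1$ or $\Gamma_1[\sqrt{-3}]$, corresponds via $\tau^*$ to the $\mathfrak{S}_4$-invariant part of ${\MM}$, and by Theorem \ref{SubsThm} such forms are produced precisely by covariants with $a+b$ even; since $\tau^*F$ is $\mathfrak{S}_4$-invariant, the decomposition of $P$ into monomials in the $J_{a,b,c}$ automatically involves only terms of the correct parity. The main conceptual work---constructing $\mu$ and $\nu$ and controlling the poles of $\chi_{4,-2}$ along $T_1$ via $\zeta$ (Lemma \ref{pole-order} and its corollary)---has already been carried out, so the corollary is essentially a restatement of the identity $\nu\circ\mu = {\rm id}_{\MM}$. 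The only place where something could go wrong is the finiteness/convergence of the expansion of $P$ in the $J_{a,b,c}$, but this is automatic since each graded piece $\mathcal{C}(V_4\oplus V_1)_{d,m}$ is finite-dimensional.
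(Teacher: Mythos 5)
Your proposal is correct and follows essentially the same route as the paper: the corollary is read off from the identity $\nu\circ\mu = {\rm id}_{\MM}$ of (7) together with the description of $\nu$ as substitution of the coordinates of $\chi_{4,-2}$ and $\chi_{1,1}=c\,E_{1,1}$ from Theorem \ref{SubsThm}. The extra remarks on the $\mathfrak{S}_4$-invariant part and the parity of $a+b$ are consistent with the paper but not needed for the argument.
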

\begin{proof}
The composition 
$\MM \to \mathcal{C}(V_4\oplus V_1) \to
\MM[1/\zeta]$ is the identity on ${\MM}$. Indeed, the map $\mu$ interprets
modular forms in terms of bi-covariants and the map $\nu$ re-interprets
a bi-covariant as a (a priori meromorphic) Teichm\"uller modular form. But $\nu$
is given by substituting the coordinates of $\chi_{4,-2}$ and $\chi_{1,1}$.
The form $\chi_{1,1}$ is a non-zero multiple of $E_{1,1}$.
\end{proof}
\begin{remark}
Given generators $\chi_{4,4}$ of $S_{4,4}(\Gamma[\sqrt{-3}],{\det}^2)$ and
$E_{1,1}$ of $M_{1,1}(\Gamma[\sqrt{-3}],{\det})$ one can show directly
using the modular behavior of $\chi_{4,4}, E_{1,1}$ and $\zeta$,
 that if we write
$$
\chi_{4,4}/\zeta=\sum_{i=0}^4 \alpha'_i X_1^{4-i}X_2^i,\quad
\chi_{1,1}=\beta_1' X_1+\beta_2' X_2 \, ,
$$
the substitution of $\alpha_i'$ for $a_i$, $\beta_i'$ for $b_i$ and
$X_i$ for $x_i$ in a covariant of multi-degree $(a,b,c)$ gives 
a modular form of weight $(c,(3b-c)/2, 2(a+b+c))$.
\end{remark}

The orders of the modular forms $\nu(J)$ along the curve $T_1$ 
for the generating invariants 
given in Section \ref{covariants} can be deduced from
Corollary \ref{orders}. We give a table.

\begin{footnotesize}
\smallskip
\vbox{
\bigskip\centerline{\def\quad{\hskip 0.6em\relax}
\def\quod{\hskip 0.5em\relax }
\vbox{\offinterlineskip
\hrule
\halign{&\vrule#&\strut\quod\hfil#\quad\cr
height2pt&\omit&&\omit&&\omit&&\omit&&\omit&&\omit&&
\omit&&\omit&&\omit&&\omit&\cr
%\noalign{\hrule}
\noalign{\hrule}
&$J_{2,0,0}$ && $J_{0,1,1}$ && $J_{3,0,0}$ && $J_{1,4,0}$ && $J_{1,3,1}$ && $J_{1,2,2}$ && $J_{1,1,3}$ && $J_{1,0,4}$&& $J_{2,4,0}$ && $J_{2,3,1}$ & \cr
\noalign{\hrule}
&$-2$ && $0$ && $-3$ && $1$ && $0$ && $-1$ && $-1$ && $-1$&& $0$ && $-1$ & \cr
\noalign{\hrule}
height2pt&\omit&&\omit&&\omit&&\omit&&\omit&&\omit&&
\omit&&\omit&&\omit&&\omit&\cr
\noalign{\hrule}
&$J_{2,2,2}$ && $J_{2,1,3}$ && $J_{2,0,4}$ && $J_{3,6,0}$ && $J_{3,5,1}$ && $J_{3,4,2}$ && $J_{3,3,3}$ && $J_{3,2,4}$&& $J_{3,1,5}$ && $J_{3,0,6}$ & \cr
\noalign{\hrule}
&$-2$ && $-2$ && $-2$ && $0$ && $-1$ && $-1$ && $-1$ && $-1$&& $-1$ && $-1$ & \cr
} \hrule}
}}

\end{footnotesize}

\end{section}
%%%%%%%%%%%%%
\begin{section}{Gradients of theta functions}
In order to use Theorem \ref{SubsThm} effectively it is important to
know the Fourier-Jacobi expansions of $\zeta$, $\chi_{4,-2}$ (or $\chi_{4,4}$)
and $E_{1,1}$ quite well. In this section and the next one we construct
these modular forms and give part of their Fourier-Jacobi expansion.
We will use gradients of theta series to construct modular forms. 

Recall the definition of
theta series with characteristics (see \cite{Igusa-Th}, p.49):
let $g\in\ZZ_{\geqslant 1}$, $(\mu_1, \ldots, \mu_g)\in \RR^g$, 
$(\nu_1, \ldots, \nu_g)\in \RR^g$ and set for
$\tau\in\mathfrak{H}_g$, the Siegel upper half space of degree $g$,  
and $z=(z_1, \ldots, z_g)\in \CC^g$
\[
\vartheta_{\left[\begin{smallmatrix}\mu \\ \nu \end{smallmatrix}\right]}(\tau,z)=
\vartheta_{\left[\begin{smallmatrix} \mu_1, \ldots, \mu_g\\ \nu_1, \ldots, \nu_g \end{smallmatrix}\right]}(\tau,z)=
\sum_{n=(n_1, \ldots, n_g)\in \ZZ^g}
e^{\pi i(n+\mu)\left(\tau(n+\mu)^t+2(z+\nu)^t\right)} \, .
\]
 We simply call this series a theta series with characteristics. 
Its restriction to $z=0$ is called a theta constant,  
and we omit the variable $z$ in this case:
$
\vartheta_{\left[\begin{smallmatrix}\mu \\ \nu \end{smallmatrix}\right]}(\tau,0)=
\vartheta_{\left[\begin{smallmatrix}\mu \\ \nu \end{smallmatrix}\right]}(\tau)
$.
We have the formulas (see loc.\ cit.)
$$
\vartheta_{-\left[\begin{smallmatrix}\mu \\ \nu \end{smallmatrix}\right]}(\tau,z)=
\vartheta_{\left[\begin{smallmatrix}\mu \\ \nu \end{smallmatrix}\right]}(\tau,-z)
\qquad \text{\rm and} \qquad
\vartheta_{\left[\begin{smallmatrix}\mu +m\\ \nu+n \end{smallmatrix}\right]}(\tau,z)=
e^{2 \pi i \mu n^t}
\vartheta_{\left[\begin{smallmatrix}\mu \\ \nu \end{smallmatrix}\right]}(\tau,z)
$$
for any $(m,n)\in\ZZ^g\times\ZZ^g$.  

From \cite[p.\ 85, p.\ 176]{Igusa-Th} 
we deduce the following transformation formula for the gradient 
$\nabla \vartheta_{\left[\begin{smallmatrix}\mu \\ \nu 
\end{smallmatrix}\right]}$
(written as a column vector) of the theta function 
$\vartheta_{\left[\begin{smallmatrix}\mu \\ \nu \end{smallmatrix}\right]}$
as a function of $z$. 

\begin{proposition}\label{gradientthetatrans}
Let $\gamma=( a,b; c, d) \in {\rm Sp}(2g, {\ZZ})$ then we have
$$
\nabla \vartheta_{\gamma\cdot\left[\begin{smallmatrix}\mu \\ \nu \end{smallmatrix}\right]}
(\gamma\, \tau,z(c\tau+d)^{-1})= j(\gamma,\tau,z) \, \left(
(c\tau+d)\nabla\vartheta_{\left[\begin{smallmatrix}\mu \\ \nu \end{smallmatrix}\right]}(\tau,z)+
\vartheta_{\left[\begin{smallmatrix}\mu \\ \nu \end{smallmatrix}\right]}(\tau,z)
\nabla(e^{ \pi i z(c\tau+d)^{-1}c\, z^t})
\right)\, ,
$$
where
$$
j(\gamma,\tau,z) =
\kappa(\gamma)
e^{\pi i \phi(\gamma,\left[\begin{smallmatrix}\mu \\ \nu \end{smallmatrix}\right])}
\det(c\tau+d)^{\frac{1}{2}}
e^{ \pi i z(c\tau+d)^{-1}c\, z^t}
$$
with
\[
\phi(\gamma,\left[\begin{smallmatrix}\mu \\ \nu \end{smallmatrix}\right])=
-\mu d^tb\mu^t+2\mu b^t c\nu^t-\nu c^ta\nu^t+(\mu d^t-\nu c^t)(ab^t)_0
\]
\[
\gamma\cdot\left[\begin{smallmatrix}\mu \\ \nu \end{smallmatrix}\right]=
\left[
\left(\begin{smallmatrix} d & -c\\ -b & a \end{smallmatrix}\right)
\left(\begin{smallmatrix} \mu \\ \nu \end{smallmatrix}\right)
+\frac{1}{2}\left(\begin{smallmatrix} (cd^t)_0 \\ (ab^t)_0 \end{smallmatrix}\right)
\right]=
\left[
\begin{smallmatrix} 
d\mu-c\nu+(cd^t/2)_0\\ 
-b\mu+a\nu+(ab^t/2)_0
\end{smallmatrix}
\right].
\]
Moreover $\kappa(\gamma)$ is an eighth root of unity (depending only on $\gamma$) and the symbol
$X_0$ denotes the diagonal (column) vector (in its natural order) of a matrix $X$.
\end{proposition}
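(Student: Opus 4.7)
The plan is to obtain the formula by differentiating the classical theta transformation law in the variable $z$. Abbreviate $\mathbf{m}:=\left[\begin{smallmatrix}\mu\\ \nu\end{smallmatrix}\right]$, $M:=(c\tau+d)^{-1}$, and $E(z):=e^{\pi i\, zMcz^t}$. Igusa's transformation law, as recalled in \cite{Igusa-Th} on the cited pages, reads
$$
\vartheta_{\gamma\cdot\mathbf{m}}(\gamma\tau,\, zM)\;=\;j(\gamma,\tau,z)\,\vartheta_{\mathbf{m}}(\tau,z),
$$
where $j(\gamma,\tau,z)$ factors as the product of the $z$-independent scalar $\kappa(\gamma)e^{\pi i\phi(\gamma,\mathbf{m})}\det(c\tau+d)^{1/2}$ and the Gaussian $E(z)$. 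The whole proof is the result of applying $\nabla_z$ to both sides of this identity and rearranging.

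On the left-hand side, the chain rule applied to $z\mapsto zM$, using $\partial w_j/\partial z_i=M_{ij}$, gives (with gradients viewed as column vectors)
$$
\nabla_z\!\bigl[\vartheta_{\gamma\cdot\mathbf{m}}(\gamma\tau,zM)\bigr]\;=\;M\cdot(\nabla\vartheta_{\gamma\cdot\mathbf{m}})(\gamma\tau,zM),
$$
so multiplying through by $M^{-1}=c\tau+d$ isolates the desired outer gradient. On the right-hand side the only $z$-dependent factors are $E$ and $\vartheta_{\mathbf{m}}$, so the product rule yields
$$
\kappa(\gamma)e^{\pi i\phi}\det(c\tau+d)^{1/2}\cdot\bigl[E\,\nabla\vartheta_{\mathbf{m}}(\tau,z)+\vartheta_{\mathbf{m}}(\tau,z)\,\nabla E\bigr].
$$
Combining the two sides and regrouping the scalars so that $E$ is absorbed back into $j$, while distributing $(c\tau+d)$ across the resulting two terms, produces the claimed form.

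A subsidiary but essential point is that the matrix $Mc$ is symmetric, so that $zMcz^t$ is a genuine quadratic form and $\nabla E$ takes a clean shape. This follows from the symplectic relation $cd^t=dc^t$: the identity $(c\tau+d)\,Mc=c\tau c^t+dc^t=(c\tau+d)c^t$ gives $Mc=c^t M^t$ after using the symmetry of $\tau$, which is $Mc=(Mc)^t$.

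The main obstacle is essentially bookkeeping: one must carefully track row-versus-column conventions for $z$, the placement of transposes, and on which side the matrix $M$ acts on gradients once the chain rule has been applied. Once these conventions are fixed and the symmetry of $Mc$ is in hand, the proof is a direct combination of Igusa's transformation law with the chain and product rules.
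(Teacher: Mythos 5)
Your approach is exactly the paper's: the proposition is presented there as a deduction from Igusa's transformation law (the cited pages of \cite{Igusa-Th}), and the entire argument is to apply $\nabla_z$ to that law using the chain and product rules. Your bookkeeping of the chain rule (the factor $M=(c\tau+d)^{-1}$ lands on the left of the column gradient, so one multiplies by $c\tau+d$ to isolate $\nabla\vartheta_{\gamma\cdot\left[\begin{smallmatrix}\mu\\ \nu\end{smallmatrix}\right]}$) and your verification that $Mc$ is symmetric via $cd^t=dc^t$ are both correct and are the only real content of the derivation.

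One step does not come out as you claim, and you should be aware of it: ``distributing $(c\tau+d)$ across the resulting two terms'' does not literally produce the displayed formula. The computation gives
\[
\nabla \vartheta_{\gamma\cdot\left[\begin{smallmatrix}\mu \\ \nu \end{smallmatrix}\right]}
(\gamma\tau,zM)=
\kappa(\gamma)e^{\pi i \phi}\det(c\tau+d)^{\frac12}\,(c\tau+d)
\Bigl(E(z)\,\nabla\vartheta_{\left[\begin{smallmatrix}\mu \\ \nu \end{smallmatrix}\right]}(\tau,z)
+\vartheta_{\left[\begin{smallmatrix}\mu \\ \nu \end{smallmatrix}\right]}(\tau,z)\,\nabla E(z)\Bigr),
\]
with $E(z)=e^{\pi i zMcz^t}$: the matrix $c\tau+d$ multiplies \emph{both} terms, while the scalar $E$ sits only in the first. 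If you factor $E$ out of both terms so that it can be absorbed into $j(\gamma,\tau,z)$, the second term becomes $2\pi i\,\vartheta_{\left[\begin{smallmatrix}\mu \\ \nu \end{smallmatrix}\right]}(\tau,z)\,cz^t$, using $(c\tau+d)E^{-1}\nabla E=2\pi i\,cz^t$; this differs from the term $\vartheta_{\left[\begin{smallmatrix}\mu \\ \nu \end{smallmatrix}\right]}\nabla E$ that follows $j$ in the stated formula by the factor $(c\tau+d)^{-1}E$. The mismatch is invisible in the only place the proposition is used, namely Corollary \ref{TransformationGradientRestriction}, which evaluates at $z=0$ where $\nabla E$ vanishes; but as a proof of the literal statement your last sentence papers over a genuine discrepancy (which is arguably an imprecision in the statement itself), so either record the identity in the form above or restrict your claim to $z=0$.
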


As a direct corollary of Proposition \ref{gradientthetatrans}, we have
for theta functions vanishing on $\mathfrak{B}\times (0) \subset
\mathfrak{B}\times {\CC}^3$ the following.

\begin{corollary}\label{TransformationGradientRestriction}
Assume that
$\vartheta_{\left[\begin{smallmatrix}\mu \\ \nu \end{smallmatrix}\right]}(\iota(u,v),0)=0$
for any $(u,v)\in \mathfrak{B}$. 
Then for any
$\gamma=\left(\begin{smallmatrix} a & b\\ c & d 
\end{smallmatrix}\right)\in {\rm Sp}(6,{\ZZ})$
we have
\[
\nabla \vartheta_{\gamma\cdot\left[\begin{smallmatrix}\mu \\ \nu \end{smallmatrix}\right]}
(\gamma \, \iota(u,v),0)=
\kappa(\gamma)
e^{\pi i \phi(\gamma,\left[\begin{smallmatrix}\mu \\ \nu \end{smallmatrix}\right])}
\det(c\, \iota(u,v)+d)^{\frac{1}{2}}
(c\, \iota(u,v)+d)\nabla\vartheta_{\left[\begin{smallmatrix}\mu \\ \nu \end{smallmatrix}\right]}(\iota(u,v),0).
\]
\end{corollary}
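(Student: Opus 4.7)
The plan is to specialize the identity of Proposition~\ref{gradientthetatrans} to the point $z = 0$ and the period matrix $\tau = \iota(u,v)$, and to verify that the right-hand side collapses to a single product.

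First, I would examine the factor $j(\gamma,\tau,z)$ at $z = 0$. Its $z$-dependence lives entirely in the exponential $e^{\pi i z(c\tau+d)^{-1}cz^t}$, which equals $1$ at $z = 0$, so
\[
j(\gamma,\iota(u,v),0) = \kappa(\gamma)\, e^{\pi i \phi(\gamma,[\mu,\nu])}\, \det(c\,\iota(u,v)+d)^{1/2}.
\]
Next, I would consider the two summands inside the bracket of Proposition~\ref{gradientthetatrans}. The first, $(c\tau+d)\nabla\vartheta_{[\mu,\nu]}(\tau,z)$, specializes at $z=0$ and $\tau=\iota(u,v)$ to $(c\,\iota(u,v)+d)\,\nabla\vartheta_{[\mu,\nu]}(\iota(u,v),0)$, which is precisely the factor appearing in the corollary.

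The crucial observation is that the second summand, $\vartheta_{[\mu,\nu]}(\tau,z)\,\nabla\bigl(e^{\pi i z(c\tau+d)^{-1}cz^t}\bigr)$, vanishes at $z = 0$ with $\tau = \iota(u,v)$. The exponent is a homogeneous quadratic form in $z$, so the gradient of the exponential at $z = 0$ is already the zero vector; simultaneously, the standing hypothesis $\vartheta_{[\mu,\nu]}(\iota(u,v),0) = 0$ kills the scalar prefactor. Either vanishing alone would suffice, and together they remove any ambiguity about limits or orders of vanishing.

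Assembling these simplifications, the left-hand side at $z = 0$ becomes $\nabla\vartheta_{\gamma\cdot[\mu,\nu]}(\gamma\,\iota(u,v),0)$, since the argument $z(c\tau+d)^{-1}$ also becomes $0$. The right-hand side reduces to the single product
\[
\kappa(\gamma)\, e^{\pi i \phi(\gamma,[\mu,\nu])}\,\det(c\,\iota(u,v)+d)^{1/2}\,(c\,\iota(u,v)+d)\,\nabla\vartheta_{[\mu,\nu]}(\iota(u,v),0),
\]
which is exactly the formula claimed. There is essentially no obstacle: the content lies entirely in Proposition~\ref{gradientthetatrans}, and the corollary is the clean specialization needed in later sections to produce modular forms from gradients of theta functions that vanish identically on $\iota(\mathfrak{B})\times\{0\}$.
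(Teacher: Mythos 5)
Your specialization of Proposition \ref{gradientthetatrans} at $z=0$ (with $\tau=\iota(u,v)$) is correct and is exactly the "direct corollary" argument the paper leaves implicit: the exponential factor in $j(\gamma,\tau,z)$ equals $1$ at $z=0$, and the second summand dies there since the gradient of the homogeneous quadratic exponential vanishes at the origin and the theta constant vanishes by hypothesis. Your side remark that either vanishing alone would suffice is also accurate; the standing hypothesis is really there because the corollary is only applied to characteristics whose theta constants vanish identically along $\iota(\mathfrak{B})$, so that the restricted gradients transform as vector-valued modular forms.
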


\medskip

If $C$ is a smooth projective curve of genus $3$ that is a 
triple cyclic cover of $C/\alpha={\PP}^1$,
the kernel of multiplication by $1-\alpha$ on
its Jacobian ${\rm Jac}(C)$ is isomorphic to $({\ZZ}/3{\ZZ})^3$.
This is a totally isotropic subspace for the Weil pairing
on ${\rm Jac}(C)[3]$. If the ramification points of
$C \to {\PP}^1$ are $p_0,p_1,\ldots,p_4$, with $p_0$ the
unique point with $\rho^2$ action, we have a surjective
homomorphism
$$
\phi: ({\ZZ}/3{\ZZ})^4 \to {\rm Jac}(C)[1-\alpha],
\quad (c_1,\ldots,c_4) \to \sum c_i (p_i-p_0)
$$
with kernel given by $\sum_{i=1}^4 c_i =0$.
The group $\mathfrak{S}_4 \times \mu_2$ acts and can be
seen as the orthogonal group for $(O_F/\sqrt{-3}\, O_F)^3$ with
the form $ab'+ab'+cc'$.

We can identify the canonical theta divisor
$\Theta \subset {\rm Pic}^{(2)}(C)$
with a theta divisor in ${\rm Jac}(C)$
by translation over $\kappa=2p_0$, a  half-canonical divisor.
There are fifteen $(1-\alpha)$-torsion points lying on a
theta divisor $\Theta- 2p_0 \subset {\rm Jac}(C)$, namely:
$p_i+p_j-2p_0$ for $0\leq i\leq j\leq 4$ and $(i,j)\neq (0,0)$.
Note the linear equivalence
$p_1+p_2+p_3+p_4 \sim 4p_0$.

The set of $15$ torsion points on the theta divisor
can be divided into three sets:
$$C_1=\{0, \pm (p_i-p_0), i=1,\ldots4\}, \quad
C_2=\{p_i+p_j-2p_0: \, 1\leq i < j\leq 4\}$$
and $C_0$ the complement of $C_1\sqcup C_2$
with cardinalities $\# C_0=12$, $\# C_1=9$
and $\# C_2=6$.

If we let the theta characteristic $2p_0$ correspond to 
$\left[\begin{smallmatrix}\mu \\ \nu \end{smallmatrix}\right]=
\left[\begin{smallmatrix}0 & 1/2 & 0 \\ 0 & 1/2 & 0 
\end{smallmatrix}\right]$,
then using the embedding $\sigma$ 
of $\Gamma[\sqrt{-3}]$ in ${\rm Sp}(6,{\ZZ})$
and the property that 
\[
\sigma(g)\cdot \left[\begin{smallmatrix}\mu \\ \nu \end{smallmatrix}\right]\equiv
\left[\begin{smallmatrix}\mu \\ \nu \end{smallmatrix}\right] \bmod \ZZ
\]
for any  of $g\in \Gamma[\sqrt{-3}]$, we find that the set
$C_i$ of $(1-\alpha)$-torsion points 
corresponds to the set of theta characteristics of degree $3$
$$
\left\{ \left[ \begin{smallmatrix} k/3 & (2l+1)/6 & -k/3 \\
m/3 & (2l+1)/6 & m/3 \end{smallmatrix} \right] : 0\leq k,l,m \leq 2:
km-(l-1)^2\equiv i-1 (\bmod 3\right\} \, .
$$
We will abbreviate such a characteristic by $[k l m]$. 
\begin{lemma}
Writing $\sigma(g)=\left(\begin{smallmatrix} a & b\\ c & d \end{smallmatrix}\right)$
for $g\in \Gamma[\sqrt{-3}]$ and $\iota(u,v)=\tau$ for $(u,v) \in \mathfrak{B}$ we have
\[
(c\tau +d)
\left[
\begin{smallmatrix}
v_1 \\
\left(\begin{smallmatrix} 1 & 0\\ 0 & -\rho^2 \end{smallmatrix}\right)
\left[
\begin{smallmatrix}
v_2 \\
v_1
\end{smallmatrix}
\right]
\end{smallmatrix}
\right]=
\left[
\begin{smallmatrix}
{\rm pr}_2(j_2(g,(u,v))
\left[
\begin{smallmatrix}
v_2 \\
v_1
\end{smallmatrix}
\right])\\
\left(\begin{smallmatrix} 1 & 0\\ 0 & -\rho^2 \end{smallmatrix}\right)
j_2(g,(u,v))
\left[
\begin{smallmatrix}
v_2 \\
v_1
\end{smallmatrix}
\right]
\end{smallmatrix}
\right] \, ,
\]
where ${\rm pr}_2$ denotes
the projection $\left[
\begin{smallmatrix}
x \\
y
\end{smallmatrix}
\right] \mapsto y$.
\end{lemma}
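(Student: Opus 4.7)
The plan is to verify this matrix identity by direct computation, combining the explicit formulas for $\iota(u,v)$ and $\sigma(g)$ from Section~\ref{embedding} with the definition of $j_2$ from Section~\ref{sectionPMF}. After unpacking the $2\times 2$ multiplications, the column on the left becomes $(v_1, v_2, -\rho^2 v_1)^T$ and, writing $j_2(g,(u,v))\left[\begin{smallmatrix} v_2\\ v_1\end{smallmatrix}\right]=\left[\begin{smallmatrix} A\\ B\end{smallmatrix}\right]$, the column on the right becomes $(B, A, -\rho^2 B)^T$. So the identity to prove is
\[
(c\, \iota(u,v)+d)\begin{pmatrix} v_1\\ v_2\\ -\rho^2 v_1\end{pmatrix}=\begin{pmatrix} B\\ A\\ -\rho^2 B\end{pmatrix};
\]
geometrically, the fixed two-dimensional subspace $W=\{(v_1,v_2,-\rho^2 v_1)^T:v_1,v_2\in\CC\}\subset\CC^3$ is preserved by $c\, \iota(u,v)+d$, and the induced action on $W$ realises $j_2(g,(u,v))$ after swapping the two coordinates.

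Since the identity is $\CC$-linear in $(v_1, v_2)$, it suffices to check the two cases $(v_1,v_2)=(1,0)$ and $(v_1,v_2)=(0,1)$. The blocks $c$ and $d$ are read off the displayed formula for $\sigma(g)$ in terms of $a_{ij}, b_{ij}$ with $g_{ij}=a_{ij}+\rho b_{ij}$, while the minors $G_{ij}$ appearing in $j_2$ can be related to the entries of $g$ via the adjugate relation for Hermitian similitudes: for $g\in\Gamma$ one has $\eta(g)=1$, so $\bar g^{\,t}Hg=H$ with $H$ the matrix of the Hermitian form, and therefore
\[
g^{-1}=H^{-1}\bar g^{\,t}H,\qquad G_{ij}=\det(g)\,(H^{-1}\bar g^{\,t}H)_{ji},
\]
expressing each $G_{ij}$ as a polynomial in the conjugated entries $\bar g_{k\ell}=a_{k\ell}+\rho^2 b_{k\ell}$. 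After this substitution the identity reduces to a polynomial identity in $a_{ij}, b_{ij}, u, v$.

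A streamlined route is the cocycle reduction. Both sides are compatible cocycles in $g$: on the left via the $\mathrm{Sp}(6,\ZZ)$-cocycle identity $c_{g_1g_2}\tau+d_{g_1g_2}=(c_{g_1}(g_2\cdot\tau)+d_{g_1})(c_{g_2}\tau+d_{g_2})$ pulled back along $\sigma$, and on the right via $j_2(g_1g_2,(u,v))=j_2(g_1,g_2\cdot(u,v))\, j_2(g_2,(u,v))$; a short bookkeeping check shows that the coordinate swap in the formulation matches up precisely when one iterates. Consequently, it suffices to verify the identity on the six generators $g_0,g_1,\ldots,g_5$ of $\Gamma[\sqrt{-3}]$ from Lemma~\ref{generators}, for each of which both sides are explicit matrices in $(u,v)$ that can be computed and compared by hand. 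The main obstacle is the bookkeeping: translating between the $6\times 6$ real matrix $\sigma(g)$, the complex entries $g_{ij}$, and the minors $G_{ij}$ in $j_2$; once the cocycle reduction to the six generators is in place, the remaining verifications are short and mechanical.
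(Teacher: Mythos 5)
Your proposal is correct and follows essentially the same route as the paper, whose proof consists precisely of the remark that the identity need only be checked on the six generators of $\Gamma[\sqrt{-3}]$ from Lemma~\ref{generators}. Your additional justification of the reduction (compatibility of both sides with the cocycle relations for $c\tau+d$ and $j_2$, which is valid because the intertwining map $(v_1,v_2)\mapsto(v_1,v_2,-\rho^2 v_1)^T$ is independent of $(u,v)$) is a detail the paper leaves implicit.
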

The proof can be carried out by checking it on the generators of $\Gamma[\sqrt{-3}]$.
A set of generators was given in Lemma \ref{generators}.

From the set $C_1$ we take representatives modulo changing sign
$$
C_1'= \{ [011], [110], [101], [202], [010] \}
$$
and for $\lambda \in C_1'$  with $g_0=\rho 1_3$ and $\sigma(g_0)=(a_0,b_0;c_0,d_0) \in {\rm Sp}(6,{\ZZ})$ we have
\[
\nabla \vartheta_{\lambda}(\iota(u,v),0)=
\rho
(c_0\iota(u,v)+d_0)
\nabla \vartheta_{\lambda}(\iota(u,v),0)
=
\left[\begin{smallmatrix}0 & 0 & -\rho\\-u & 1 & -\rho u\\ \rho & 0 &-\rho\end{smallmatrix}\right]
\nabla \vartheta_{\lambda}(\iota(u,v),0)
\]
and this implies that
$$
\frac{\partial \vartheta_{\lambda}}{\partial z_3}(\iota(u,v),0)=
-\rho^2\, \frac{\partial \vartheta_{\lambda}}{\partial z_1}(\iota(u,v),0).
$$
So for the characteristics in the set $C_1$, only the last component of the gradient of $\vartheta_{\lambda}$ 
is dependent of the previous one. This gives a $(2+1)$--decomposition and therefore 
good hopes to get vector-valued modular forms. 

For the five elements $\lambda_i \in C_1'$ ($i=0,\ldots,4$) listed in the order above we put
$$
F_i(u,v)=c^{-1}\,
\left[
\begin{matrix}
\frac{\partial \vartheta_{\lambda_i}}{\partial z_2}(\iota(u,v),0)\\
\frac{\partial \vartheta_{\lambda_i}}{\partial z_1}(\iota(u,v),0)
\end{matrix} 
\right] \eqno(12)
$$
with  the constant $c$ given by
$$
c=\vartheta_{\left[\begin{smallmatrix} 1/6 \\ 1/6 \end{smallmatrix}\right]}(-\rho^2,0)=\frac{3^{3/8}}{2 \pi}\Gamma(1/3)^{3/2}e^{\frac{5\pi i}{72}}\, .
\eqno(13)
$$

Using Corollary \ref{TransformationGradientRestriction} we then obtain:
\begin{lemma}
Let $\lambda=[k l m]\in C_1^{\prime}$ and $g \in \Gamma[\sqrt{-3}]$ and write 
$
\sigma(g)\cdot \lambda=\lambda+
\left[\begin{smallmatrix}m_1 & m_2 & m_3\\ n_1 & n_2 & n_3\end{smallmatrix}\right]
$.
Then $F_i(g\cdot(u,v))= A_i(g,u,v) \, F_i(u,v)$ with $A_i(g,u,v)$ given by
\[
\kappa(\sigma(g)) \, 
e^{\pi \sqrt{-1} \phi(\sigma(g), c_{1i})} \,
e^{-\frac{2 \pi \sqrt{-1}}{6} (2(n_1-n_3)k+n_2(2l+1))} \,
j_1(g,(u,v))j_2(g,(u,v))\, .
\]
\end{lemma}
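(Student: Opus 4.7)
The plan is to apply Corollary~\ref{TransformationGradientRestriction} to $\vartheta_{\lambda_i}$ at $\tau=\iota(u,v)$ and then unpack the right-hand side using three ingredients: the shift identity for theta characteristics, the matrix lemma immediately preceding the statement, and the block-diagonalization of $c\iota(u,v)+d$ from the proof of Lemma~\ref{pullbacksplits}. The corollary applies because each $\vartheta_{\lambda_i}$ with $\lambda_i\in C_1'$ vanishes identically on $\iota(\mathfrak{B})\times\{0\}$. Its left-hand side involves $\nabla\vartheta_{\sigma(g)\cdot\lambda_i}$; writing $\sigma(g)\cdot\lambda_i=\lambda_i+\left[\begin{smallmatrix}m_1&m_2&m_3\\n_1&n_2&n_3\end{smallmatrix}\right]$ and using the shift formula $\vartheta_{[\mu+m,\nu+n]}=e^{2\pi i\mu\cdot n^t}\vartheta_{[\mu,\nu]}$ to rewrite $\vartheta_{\sigma(g)\cdot\lambda_i}$ in terms of $\vartheta_{\lambda_i}$, the gradient at $\iota(g\cdot(u,v))$ picks up a scalar $e^{-2\pi i\mu\cdot n^t}$. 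With $\mu=(k/3,(2l+1)/6,-k/3)$ this collapses to exactly $\exp\bigl(-\tfrac{2\pi i}{6}(2(n_1-n_3)k+n_2(2l+1))\bigr)$, the third factor in $A_i(g,u,v)$.

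Next I process the matrix action on the gradient. The calculation with $g_0=\rho\,1_3$ that motivates the choice of $C_1'$ already showed $\partial_3\vartheta_{\lambda_i}=-\rho^2\partial_1\vartheta_{\lambda_i}$, so $\nabla\vartheta_{\lambda_i}(\iota(u,v),0)$ has the shape $(v_1,v_2,-\rho^2 v_1)^t$, which is precisely the input to which the lemma preceding the statement applies. That lemma sends such a vector to $(w_1,w_2,-\rho^2 w_1)^t$, where $(w_2,w_1)^t=j_2(g,(u,v))\,(v_2,v_1)^t$. Since the definition of $F_i$ orders the components as $(\partial_2,\partial_1)^t$, the constant $c$ cancels and $(c\iota(u,v)+d)$ contributes precisely a factor of $j_2(g,(u,v))$ acting on $F_i(u,v)$. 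For the determinant factor, the block decomposition $c\iota(u,v)+d\sim j_2(g,(u,v))\oplus j_1(g,(u,v))$ from the proof of Lemma~\ref{pullbacksplits} gives $\det(c\iota(u,v)+d)=j_1(g,(u,v))^2/\det(g)$, and since $\det(g)\in\mu_3$ for $g\in\Gamma[\sqrt{-3}]$, the square root equals $j_1(g,(u,v))$ up to a root of unity that can be absorbed into $\kappa(\sigma(g))$. Assembling the three pieces produces the stated formula.

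The main obstacle is bookkeeping the various root-of-unity ambiguities---the eighth root $\kappa$, the sign of $\det(c\iota(u,v)+d)^{1/2}$, and the cube-root ambiguity in $\det(g)^{1/2}$---and ensuring that the indexing of gradient components in the definition of $F_i$ lines up correctly with the block decomposition. A consistency check on the six generators of $\Gamma[\sqrt{-3}]$ listed in Lemma~\ref{generators} should pin down the residual factor and confirm the normalization of $\kappa$ used in the statement.
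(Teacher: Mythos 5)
Your proposal is correct and follows exactly the route the paper intends: the paper offers no written proof beyond the phrase ``Using Corollary \ref{TransformationGradientRestriction} we then obtain,'' and your unpacking — vanishing of $\vartheta_{\lambda_i}$ on $\iota(\mathfrak{B})\times\{0\}$, the integral-shift formula producing $e^{-2\pi i\mu n^t}$ with $\mu=(k/3,(2l+1)/6,-k/3)$, the preceding matrix lemma converting $(c\iota(u,v)+d)$ into $j_2$ on the $(\partial_2,\partial_1)$-ordered components, and $\det(c\iota(u,v)+d)^{1/2}$ yielding $j_1$ up to a root of unity fixed on the generators — is precisely that argument. The only residual point, the normalization of $\kappa$ versus the branch of $\det(g)^{1/2}$, you flag explicitly and resolve the right way (checking on the generators of Lemma \ref{generators}).
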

This gives us  the transformation behavior of the $F_i$ under the 
generators $g_i$ of the group $\Gamma[\sqrt{-3}]$ given in 
Lemma \ref{generators}.
\begin{corollary}
We have ${F_i}_{|{1,1}} g_j=c(i,j) F_i$ with $c(i,j)$ given in the 
following diagram.
\end{corollary}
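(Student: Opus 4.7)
The corollary is a direct computation from the preceding lemma: I would simply specialize that lemma to each generator $g_j$ ($j=0,\ldots,5$) listed in Lemma \ref{generators} and to each of the five characteristics $\lambda_i\in C_1'$, and record the resulting scalar.

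First, I would verify that the displayed equation $F_i(g\cdot(u,v))=A_i(g,u,v)\,F_i(u,v)$ exactly matches the slash action of weight $(1,1)$. By definition
\[
(F_i\vert_{1,1}g)(u,v)=j_1(g,u,v)^{-1}j_2(g,u,v)^{-1}F_i(g\cdot(u,v)),
\]
where $\mathrm{Sym}^1(j_2^{-1})=j_2^{-1}$ since $j=1$. The preceding lemma gives
\[
A_i(g,u,v)=\kappa(\sigma(g))\,e^{\pi\sqrt{-1}\,\phi(\sigma(g),\lambda_i)}\,e^{-\frac{\pi\sqrt{-1}}{3}\bigl(2(n_1-n_3)k+n_2(2l+1)\bigr)}\,j_1(g,(u,v))\,j_2(g,(u,v)),
\]
so the factor $j_1 j_2$ cancels against the slash, leaving a scalar independent of $(u,v)$; this is the promised constant $c(i,j)$.

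Next, I would build a short table. For each $g_j$ I extract $\sigma(g_j)=(a\ b;c\ d)\in\mathrm{Sp}(6,\ZZ)$ via the formula for $\sigma$ in Section \ref{embedding}. From $\sigma(g_j)$ I compute $\kappa(\sigma(g_j))$ (an eighth root of unity, which for the unipotent generators $g_2,g_3,g_4,g_5$ simplifies dramatically), the explicit shift
\[
\sigma(g_j)\cdot\lambda_i-\lambda_i=\Bigl[\begin{smallmatrix}m_1 & m_2 & m_3\\ n_1 & n_2 & n_3\end{smallmatrix}\Bigr]\pmod{\ZZ},
\]
and the phase $\phi(\sigma(g_j),\lambda_i)=-\mu d^tb\mu^t+2\mu b^tc\nu^t-\nu c^ta\nu^t+(\mu d^t-\nu c^t)(ab^t)_0$. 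Writing $\lambda_i=[k_i\,l_i\,m_i]$, the last exponential in $A_i$ is then determined. Multiplying the three contributions yields $c(i,j)$.

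The only real obstacle is the bookkeeping: there are $5\times 6=30$ pairs $(i,j)$, and one must carefully reduce each shifted characteristic $\sigma(g_j)\cdot\lambda_i$ back into $C_1'$ modulo $\ZZ^3\times\ZZ^3$ and the sign relation $\vartheta_{-[\mu/\nu]}(\tau,z)=\vartheta_{[\mu/\nu]}(\tau,-z)$, which can change the sign of the gradient and hence contribute an extra $-1$ to $c(i,j)$. The scalar $g_0=\rho\,1_3$ is already handled in the computation preceding \eqref{12}; the diagonal $g_1$ is similar and acts by a cube root of unity. The unipotent $g_2,g_4$ (with $\sqrt{-3}$ entries) and the more elaborate $g_3,g_5$ (with $\rho-1$ entries) require evaluating $\phi$ on sparse matrices and will contribute sixth roots of unity. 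Once the $5\times 6$ table is filled in, the corollary is proved by inspection.
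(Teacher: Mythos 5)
Your plan is exactly what the paper does: the corollary is stated without a separate proof, being a direct specialization of the preceding lemma to the six generators of Lemma \ref{generators}, with the factor $j_1j_2$ cancelling against the weight-$(1,1)$ slash and the remaining root of unity read off from $\kappa$, the phase $\phi$, and the shift $[m;n]$. Your extra caution about sign reductions of the characteristic is harmless but unnecessary here, since for $g\in\Gamma[\sqrt{-3}]$ one already has $\sigma(g)\cdot\lambda\equiv\lambda \bmod \ZZ$, as recorded in the text before the lemma.
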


\[
\begin{tabular}{c|c|c|c|c|c|c}
$i \backslash j$ & 
$0$ & 
$1$ & 
$2$ & 
$3$ & 
$4$ & 
$5$ 
\\
\hline
$0$ & 
$1$ & $\rho$ & $\rho$ & $\rho^2$ & $1$ & $1$  
\\
\hline
$1$ & 
$1$ & $\rho$ & $1$ & $1$ & $\rho$ & $\rho^2$ 
\\
\hline
$2$ & 
$1$ & $1$ & $\rho$ & $\rho$ & $\rho$ & $\rho$
\\
\hline
$3$ & 
$1$ & $1$ & $\rho$ & $1$ & $\rho$ & $1$
\\
\hline
$4$ & 
$1$ & $\rho$ & $1$ & $1$ & $1$ & $1$
\end{tabular}
\]

\end{section}
%%%%%%%%%%%%%%%%%
\begin{section}{Construction of $\chi_{4,4}$ and $E_{1,1}$.}\label{Construction}
Finis constructed in \cite{Fi} elliptic modular functions $X,Y,Z$
for the elliptic curve ${\CC}/\Lambda$ 
with $\Lambda= \sqrt{-3}\, O_F$, that satisfy
the relation 
$$
X^3=\rho \, (Y^3-Z^3)\, .
$$
Here $X$, $Y$ and  $Z$ are normalized  theta functions 
satisfying
$$
f(z+\alpha)= {\rm exp}\left( \frac{2\pi}{\sqrt{3}} (\bar{\alpha} z - \rho N(\alpha))\right) f(z)
\qquad (\alpha \in \Lambda)
$$
and $Z(z)=Y(-z)$.
The zero divisor of $X$ is the degree $3$ divisor
$O_F \bmod \Lambda$  and that of $Y$ is  
$1/\sqrt{-3}+O_F \bmod \Lambda$.
These functions can be defined by
$$
X(z)=\frac{1}{c} e^{\pi z^2/\sqrt{3}} \vartheta_{\left[\begin{smallmatrix} 1/2 \\ 1/2 \end{smallmatrix}\right]}(-\rho^2,z),
\quad
Y(z)=\frac{1}{c} e^{\pi z^2/\sqrt{3}} \vartheta_{\left[\begin{smallmatrix} 1/6 \\ 1/6 \end{smallmatrix}\right]}(-\rho^2,z),
$$
with 
$c= \vartheta_{\left[ \begin{smallmatrix} 1/6 \\ 1/6 
\end{smallmatrix}\right]}(-\rho^2,0)$
as given in (13).
The functions $X$ and $Y$ satisfy  for $\alpha \in O_F$
$$
X(z+\alpha)=
e^{\frac{2\pi}{\sqrt{3}}(\bar{\alpha}z-\rho N(\alpha))}
X(z),\qquad
Y(z+\alpha)=
e^{\frac{2\pi}{\sqrt{3}}(\bar{\alpha}z-\rho N(\alpha))}\rho^{{\rm Tr}(\alpha)}
Y(z)\, .
$$
and $X(\epsilon z)=\epsilon X(z)$ for $\epsilon \in O_F^{\times}$ 
and $Y(\rho z)=Y(z)$.
Moreover, we have the identity
$$
X(\sqrt{-3} \, z)= \sqrt{-3}\,  X(z)Y(z)Z(z)\, .
$$

We can develop the theta functions $\vartheta_{\lambda}$ for
$\lambda \in C_1'$ in  Fourier-Jacobi series. 
Working this out and substituting this in the $F_i$ 
as defined in equation (12), one finds after some amount of calculation 
the following Fourier-Jacobi series of the $F_i$. 

We set 
$$\xi=(\rho^2-1)/3 \qquad \text{\rm and } \qquad  
q_v=e^{2\pi v/\sqrt{3}}
$$
and obtain
\begin{align*}
F_0(u,v)&=
\sum_{\alpha \in {O}_F}
\rho^{-Tr(\alpha)}
\left[\begin{smallmatrix} X'(\alpha u) \\ \frac{2\pi}{\sqrt{3}}\bar{\alpha}X(\alpha u) \end{smallmatrix}\right]
q_v^{N(\alpha)},\nonumber
\\
F_1(u,v)&=
\sum_{\alpha \in O_F+\xi}
\left[\begin{smallmatrix} X'(\alpha u) \\ \frac{2\pi}{\sqrt{3}}\bar{\alpha}X(\alpha u) \end{smallmatrix}\right]
q_v^{N(\alpha)},\nonumber
\\
F_2(u,v)&=
\sum_{\alpha \in {O}_F+\xi}
e^{\frac{2\pi}{\sqrt{3}}(\alpha\bar{\xi}-\bar{\alpha}{\xi})}
\left[\begin{smallmatrix} Y'(\alpha u) \\ \frac{2\pi}{\sqrt{3}}\bar{\alpha}Y(\alpha u) \end{smallmatrix}\right]
q_v^{N(\alpha)},
\\
F_3(u,v)&=
\sum_{\alpha \in {O}_F+\xi}
e^{\frac{2\pi}{\sqrt{3}}(\alpha\bar{\xi}-\bar{\alpha}\xi)}
\left[\begin{smallmatrix} Z'(\alpha u) \\ \frac{2\pi}{\sqrt{3}}\bar{\alpha}Z(\alpha u) \end{smallmatrix}\right]
q_v^{N(\alpha)},\nonumber
\\
F_4(u,v)&=
\sum_{\alpha \in {O}_F}
\left[\begin{smallmatrix} X'(\alpha u) \\ \frac{2\pi}{\sqrt{3}}\bar{\alpha}X(\alpha u) \end{smallmatrix}\right]
q_v^{N(\alpha)}. \nonumber
\end{align*}

Using the transformation behavior of $X,Y,Z$ one can calculate the
transformation of the $F_i$ under the generators $r_k$ for $k=1,2,3$
of the $\mathfrak{S}_4$-part, see Section \ref{sectionPMF}.
Putting all of that together, we get
\[
\begin{tabular}{c|c|c|c|c|c}
$i$ & 
$F_0\vert_{1,1} r_i^{-1}$ & 
$F_1\vert_{1,1} r_i^{-1}$ & 
$F_2\vert_{1,1} r_i^{-1}$  &
$F_3\vert_{1,1} r_i^{-1}$  &
$F_4\vert_{1,1} r_i^{-1}$  
\\
\hline
$1$ & 
$-F_1$ & 
$-F_0$ & 
$-F_2$ & 
$-F_3$ &
$-F_4$ 
\\
\hline
$2$ & 
$-F_0$ & 
$-F_1$ & 
$-F_3$ &
$-F_2$ &
$-F_4$ 
\\
\hline
$3$ & 
$F_0$ &
$e^{5 \pi i/9}F_2$ & 
$e^{-2 \pi i/3}F_3$ & 
$e^{\pi i/9}F_1$ &
$F_4$
\end{tabular}
\]
After these preparations we can construct the two basic modular forms.
We put 
$$
\chi_{4,4}={\rm Sym}^4(F_0,F_1,F_2,F_3)\quad 
\hbox{\rm  and } \quad  E_{1,1}=F_4 \, .
$$
\begin{corollary}
We have $\chi_{4,4} \in S_{4,4}(\Gamma[\sqrt{-3}],{\det}^2)$ 
and $E_{1,1} \in M_{1,1}(\Gamma[\sqrt{-3}],{\det})$.
\end{corollary}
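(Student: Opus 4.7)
The plan is to verify on the six generators $g_0, \ldots, g_5$ of $\Gamma[\sqrt{-3}]$ supplied by Lemma \ref{generators} that $E_{1,1}$ and $\chi_{4,4}$ have the claimed slash-transformation behavior, and then to read off holomorphicity and the cusp condition directly from the Fourier--Jacobi expansions of the $F_i$ already computed in the preceding section.

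For $E_{1,1} = F_4$, the last row of the $c(i,j)$-table gives $F_4|_{1,1}\, g_j = c(4,j)\, F_4$ with multipliers $(1, \rho, 1, 1, 1, 1)$. A direct computation of the determinants of the matrices in Lemma \ref{generators} yields $\det(g_0) = \rho^3 = 1$, $\det(g_1) = \rho$, and $\det(g_j) = 1$ for $j = 2, 3, 4, 5$ (each of these matrices being triangular once one expands along the column or row with a single non-zero off-diagonal entry). These values coincide with $c(4,j)$, so $F_4|_{1,1}\, g = \det(g)\, F_4$ holds on generators and hence on all of $\Gamma[\sqrt{-3}]$. Holomorphicity on $\mathfrak{B}$ is immediate from the explicit Fourier--Jacobi expansion of $F_4$, which displays it as an absolutely convergent sum of holomorphic vectors $(X'(\alpha u), (2\pi/\sqrt{3})\bar\alpha X(\alpha u))^t$ weighted by $q_v^{N(\alpha)}$ with $N(\alpha) \ge 0$.

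For $\chi_{4,4} = \mathrm{Sym}^4(F_0, F_1, F_2, F_3)$, the transformation $F_i(g \cdot (u,v)) = c(i,g)\, j_1(g,(u,v))\, j_2(g,(u,v))\, F_i(u,v)$ for each factor, together with the multilinearity identity $\mathrm{Sym}^4(AF_0, AF_1, AF_2, AF_3) = \mathrm{Sym}^4(A)\, \mathrm{Sym}^4(F_0, F_1, F_2, F_3)$ applied to $A = j_2(g,(u,v))$, gives
\[
\chi_{4,4}\bigl|_{4,4}\, g = \Bigl(\prod_{i=0}^{3} c(i,g)\Bigr)\, \chi_{4,4}.
\]
Reading the columns $j = 0, \ldots, 5$ of the first four rows of the $c(i,j)$-table yields the products $(1, \rho^2, 1, 1, 1, 1)$, which match $\det(g_j)^2$ in every case; hence $\chi_{4,4}$ has weight $(4,4)$ with character $\det^2$. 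To upgrade this to a cusp form, I would observe that $F_1, F_2, F_3$ are each indexed by $\alpha \in O_F + \xi$ with $\xi = (\rho^2 - 1)/3 \notin O_F$, so every Fourier--Jacobi term involves $q_v^{N(\alpha)}$ with $N(\alpha) > 0$, meaning each of $F_1, F_2, F_3$ vanishes at the cusp $q_v \to 0$. Every coefficient of $\chi_{4,4}$ in the basis $X_1^{4-k} X_2^k$ is a sum of products containing exactly one factor drawn from each $F_i$, hence involves all three of $F_1, F_2, F_3$ and vanishes at that cusp; the $\mathfrak{S}_4$-action on the $F_i$ recorded in the second table, which permutes $\{F_0, F_1, F_2, F_3\}$ up to roots of unity, then transports this vanishing to the remaining cusps.

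I expect no real obstacle; the only point demanding care is the bookkeeping of cube roots of unity across the two tables, which amounts to four third-root multiplications per generator and one determinant computation per $g_j$.
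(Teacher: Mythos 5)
Your proposal is correct and follows exactly the route the paper intends: the corollary is stated immediately after the two transformation tables and the Fourier--Jacobi expansions precisely because it follows from checking the multipliers $c(i,j)$ against $\det(g_j)$ and $\det(g_j)^2$ on the generators of Lemma \ref{generators}, together with the absence of a $q_v^0$ term (transported to the other cusps by the $\mathfrak{S}_4$-action of the $r_i$). Your determinant computations and the products of the table entries all check out, so this is the paper's own argument with the details written down.
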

\begin{remark}
Both $\chi_{4,4}$ and $E_{1,1}$ are Hecke eigenforms. 
But the eigenvalues of
$E_{1,1}$ are not always integral, see \cite[Remark 12.3]{C-vdG}.

In \cite[Section 11.3,Case 1]{B-vdG} it is conjectured that there is a
lift from $S^{-}_8(\Gamma_0(9))$ (see loc. cit., Section 11.1 for the
notation) to the $\mathfrak{S}_4$-invariant part of the space
$S_{4,4}(\Gamma[\sqrt{-3}],{\det}^2)$. 
The space $S_8^{-}(\Gamma_0(9))$ is one-dimensional and generated by one form, say $f$, whose Fourier expansion $f(\tau)= \sum_{n\geqslant 1}a_n(f)q^n$
can be normalised as
$$
f(\tau)= 
q+ 232\, q^4+260\, q^7-5760\, q^{10} +6890\, q^{13}+
7744\, q^{16}+33176\, q^{19} +\ldots\, .
$$
The claim that $\chi_{4,4}$ is a lift of $f$ is supported 
by the fact the Hecke eigenvalue
of $\chi_{4,4}$ at $1+3\rho$ (of norm $7$) is given by
\[
\lambda_{1+3\rho}(\chi_{4,4})=309-882\rho.
\]
This Hecke eigenvalue is computed directly by using the Fourier-Jacobi expansion of the last component of
$\chi_{4,4}$ (recall that the last component of a vector-valued Picard modular Hecke  eigenform is sufficient
to compute its Hecke eigenvalues). Since the Fourier-Jacobi expansion 
of $\chi_{4,4}^{(4)}$ (after suitable normalisation) 
starts with $X^2q_v^2$, 
for computing its Hecke eigenvalue at $1+3\rho$, 
we need its Fourier-Jacobi coefficient at $q_v^{14}$.
This Fourier-Jacobi coefficient 
will be  given in the next section and this gives
$$
\lambda_{1+3\rho}(\chi_{4,4})=309-882\rho=260+(1+3\rho^5)(1+3\rho^2)^2=a_7(f)+(1+3\rho^5)(1+3\rho^2)^2.
$$
\end{remark}
\end{section}
%%%%%%%%%%%%%%%%%%%%%%%%%%%%%%%%%55
\begin{section}{The Fourier-Jacobi expansion of $\chi_{1,1}$}
Here we develop the form $E_{1,1}$ in a Fourier-Jacobi series. 
Recall the definition of $E_{1,1}$ with $q_v=e^{2\pi v/\sqrt{3}}$
\[
E_{1,1}(u,v)=
\sum_{\alpha \in {O}_F}
\left[\begin{smallmatrix} X'(\alpha u) \\ \frac{2\pi}{\sqrt{3}}\bar{\alpha}X(\alpha u) \end{smallmatrix}\right]
q_v^{N(\alpha)}.
\]
Using the relation $X(\epsilon u)=\epsilon X(u)$ for 
$\epsilon \in O_F^{\times}$, we get
\[
E_{1,1}(u,v)=
\left[\begin{smallmatrix} X'(0) \\ 0 \end{smallmatrix}\right]
+
6\left[\begin{smallmatrix} X'(u) \\ \frac{2\pi}{\sqrt{3}}X(u) \end{smallmatrix}\right]\,q_v
+
6\left[\begin{smallmatrix} X'(u\sqrt{-3}) \\ -\frac{2\pi}{\sqrt{3}}\sqrt{-3}X(u\sqrt{-3}) \end{smallmatrix}\right]\,q_v^3+\ldots
\]
and using the Shintani operators (see \cite[\S 4]{C-vdG}), we can rewrite this as
\[
E_{1,1}(u,v)=
\left[\begin{smallmatrix} X'(0) \\ 0 \end{smallmatrix}\right]
+
6\left[\begin{smallmatrix} X'(u) \\ \frac{2\pi}{\sqrt{3}}X(u) \end{smallmatrix}\right]\,q_v
+
6\left[\begin{smallmatrix} (XYZ)'(u) \\ \frac{2\pi}{\sqrt{3}}3XYZ(u) \end{smallmatrix}\right]\,q_v^3+\ldots
\]
We set
$$
P_n(X,Y,Z)=\sum_{\alpha \in N_n} \bar{\alpha}X(\alpha u)\, ,
$$
where we use the notation 
$$
N_{n}=\{\alpha=a+b\rho \in O_F\, | \, N(\alpha)=a^2-ab+b^2=n\}.
$$
If $P'_n(X,Y,Z)$ denotes the derivative of $P_n(X,Y,Z)$ with
respect to the variable $u$
\[
P'_n(X,Y,Z)=(\sum_{\alpha \in N_n}
\bar{\alpha}X(\alpha u))'=
\sum_{\alpha \in N_n}
\alpha\bar{\alpha}X'(\alpha u)
=n\sum_{\alpha \in N_n}
X'(\alpha u)\, ,
\]
we have
\[
E_{1,1}(u,v)=
\left[\begin{smallmatrix} X'(0) \\ 0 \end{smallmatrix}\right]
+\sum_{n \geqslant 1}
\left[\begin{smallmatrix} P'_n(X,Y,Z)/n \\ \frac{2\pi}{\sqrt{3}}P_n(X,Y,Z) \end{smallmatrix}\right]q_v^n \, .
\]
In order to get more terms in the Fourier-Jacobi expansion of $E_{1,1}$
we write the set $N_n$ as
$
N_n=
\alpha_1\, O_F^{\times} \sqcup \alpha_2\, O_F^{\times} 
\sqcup \ldots \sqcup \alpha_j\, O_F^{\times}
$
and split $P_n$ according to this decomposition:
\[
P_n(X,Y,Z)=\sum_{\alpha \in N_n}
\bar{\alpha}X(\alpha u)
=\sum_{i=1}^j
\sum_{\epsilon \in O_F^{\times}}
\bar{\epsilon}\bar{\alpha_i}X(\epsilon \alpha_i u)
=
\sum_{i=1}^j
\sum_{\epsilon \in  O_F^{\times}}
\bar{\alpha_i}X(\alpha_i u)
=
6\, \sum_{i=1}^j \bar{\alpha_i}X(\alpha_i u)
\]
The polynomials $P_n$ are homogeneous of degree $n$ in $X, Y$ and $Z$ 
and the first few of them are given by
\begin{align*}
P_1&=6\,X;  \quad P_3=18\,XYZ; \quad  P_4=12\,X(Y^3+Z^3);\\
P_7&=-6\,X(Y^6-16Y^3Z^3+Z^6); \quad  P_9=54\, XYZ(Y^6-Y^3Z^3+Z^6);\\
P_{12}&=-36\, XYZ(2Y^{9}-3Y^6Z^3-3Y^3Z^6+2YZ^{9});\\
P_{13}&=6\,X(5Y^{12}-7Y^9Z^3+30Y^6Z^6-7Y^3Z^9+5Z^{12})\, .\\
%P_{16}&=-24\, X(Y^{15}-13Y^{12}Z^3+10Y^9Z^6+10Y^6Z^9 -13Y^3Z^{12} + Z^{15});\\
%P_{19}&=-6\,X(7Y^{18}+15Y^{15}Z^3+105Y^{12}Z^6-292Y^9Z^9+105Y^6Z^{12}+15Y^3Z^{15}+7Z^{18}).
\end{align*}
\end{section}
%%%%%%%%%%%%%%%%%%%%%%%%%%%%%%%%%55
\begin{section}{The Fourier-Jacobi expansion of $\chi_{4,4}$}
The Fourier-Jacobi expansion
of $\chi_{4,4}$ is determined by those of $F_0,\ldots,F_3$.
We determine these and start with $F_0$.
We set
$$
Q_n(X,Y,Z)=\sum_{\alpha \in N_n}
\rho^{-Tr(\alpha)} \bar{\alpha}X(\alpha u) \, ,
$$
which gives for the derivative of $Q_n(X,Y,Z)$ with respect to the variable $u$
$$
Q'_n(X,Y,Z)=n\sum_{\alpha \in N_n}
\rho^{-Tr(\alpha)}X'(\alpha u)
$$
and this leads to the expansion
$$
F_{0}(u,v)=
\sum_{\alpha \in O_F}
\rho^{-Tr(\alpha)}
\left[\begin{smallmatrix} X'(\alpha u) \\ \frac{2\pi}{\sqrt{3}}\bar{\alpha}X(\alpha u) \end{smallmatrix}\right]
q_v^{N(\alpha)}
=
\left[\begin{smallmatrix} X'(0) \\ 0 \end{smallmatrix}\right]
+\sum_{n \geqslant 1}
\left[\begin{smallmatrix} Q'_n(X,Y,Z)/n \\ \frac{2\pi}{\sqrt{3}}Q_n(X,Y,Z) \end{smallmatrix}\right]q_v^n \, .
$$
\begin{lemma}
We have $Q_n=P_n$ if $ n\equiv \, 0 \, (\bmod \, 3)$, else $Q_n=-P_n/2$.
\end{lemma}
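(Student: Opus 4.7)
The key observation is that both $P_n$ and $Q_n$ are sums over $N_n$, which is stable under the action of the unit group $O_F^\times=\mu_6$. The plan is to decompose each sum into $\mu_6$-orbits and compare the orbit contributions.

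First I would use the identity $X(\epsilon z)=\epsilon X(z)$ for $\epsilon \in O_F^\times$, combined with $\bar\epsilon\epsilon = N(\epsilon)=1$, to observe that the term $\bar\alpha X(\alpha u)$ is $\mu_6$-invariant along each orbit: $\bar{(\epsilon\alpha)} X(\epsilon\alpha u)=\bar\epsilon\bar\alpha\,\epsilon X(\alpha u)=\bar\alpha X(\alpha u)$. Consequently each orbit contributes $6\bar\alpha X(\alpha u)$ to $P_n$, whereas in $Q_n$ it contributes $\bar\alpha X(\alpha u)\cdot S(\alpha)$, where
\[
S(\alpha)=\sum_{\epsilon\in\mu_6}\rho^{-\mathrm{Tr}(\epsilon\alpha)}\, .
\]
Thus everything reduces to evaluating the character sum $S(\alpha)$.

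Next I would compute $\mathrm{Tr}(\epsilon\alpha)\bmod 3$ for each $\epsilon\in\mu_6$ when $\alpha=a+b\rho$. A direct calculation using $\rho^2=-1-\rho$ shows that the six values of $\mathrm{Tr}(\epsilon\alpha)$ are, modulo $3$, precisely $\pm(a+b)$, each occurring three times. Hence
\[
S(\alpha)=3\rho^{a+b}+3\rho^{-(a+b)}\, ,
\]
which equals $6$ when $a+b\equiv 0\pmod 3$ and $3(\rho+\rho^2)=-3$ otherwise. So in either case the orbit contribution to $Q_n$ is a rational multiple of the orbit contribution to $P_n$: it is $P_n$ itself in the first case and $-P_n/2$ in the second.

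Finally I would match the congruence condition on $\alpha$ with the condition on $n$. A quick case check over $({\ZZ}/3{\ZZ})^2$ shows that $n=a^2-ab+b^2\equiv 0 \pmod 3$ if and only if $a+b\equiv 0 \pmod 3$ (equivalently, $\sqrt{-3}\mid\alpha$ in $O_F$). Since this condition is the same for every $\alpha\in N_n$, all orbits in $N_n$ are of the same type. Combining the previous step, $Q_n=P_n$ when $n\equiv 0\pmod 3$ and $Q_n=-P_n/2$ otherwise. No serious obstacle is expected; the only place to be careful is keeping the signs and the conjugation straight when computing $\mathrm{Tr}(\epsilon\alpha)$, so I would tabulate the six cases explicitly before assembling $S(\alpha)$.
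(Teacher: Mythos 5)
Your proposal is correct and follows essentially the same route as the paper: decompose $N_n$ into $\mu_6$-orbits, note that $\bar\alpha X(\alpha u)$ is constant on each orbit, evaluate the character sum $\sum_{\epsilon}\rho^{-\mathrm{Tr}(\epsilon\alpha)}=3(\rho^{a+b}+\rho^{2(a+b)})\in\{6,-3\}$, and match $a+b\bmod 3$ with $n\equiv(a+b)^2\bmod 3$. No gaps.
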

\begin{proof} Using $X(\epsilon u)=\epsilon X(u)$ and the decomposition
$N_n=\sqcup_{i=1}^j \alpha_i \, O_F^{\times}$ as above we get
\[
Q_n(X,Y,Z)=
\sum_{i=1}^j
\big(\sum_{\epsilon \in O_F^{\times}}
\rho^{-Tr(\epsilon\alpha_i)} \big)
\bar{\alpha_i}X(\alpha_i u)
\]
and writing $\alpha_i=a_i+\rho b_i$ with $a_i, b_i \in \ZZ$, we have
\[
\sum_{\epsilon \in O_F^{\times}}
\rho^{-Tr(\epsilon\alpha_i)}=3(\rho^{a_i+b_i}+\rho^{2(a_i+b_i)})=
\begin{cases}
6 \,\,\,\,  \text{ if } a_i+b_i\equiv 0\, (\bmod\, 3) \\
-3 \text{ if } a_i+b_i\not\equiv 0\,  (\bmod\, 3)\, .
\end{cases}
\]
Noticing that $N(\alpha_i)\equiv (a_i+b_i)^2\, (\bmod\, 3) $, we get 
the desired result.
\end{proof}
For $F_1$ we set
$
N_n(\xi)=\xi\cdot\{\alpha\in O_F\, | \, N(\alpha)=n, 
\alpha\equiv 1 \, (\bmod \, \sqrt{-3})\}
$
and note that the map $\alpha+\xi\mapsto \xi(\alpha(\rho-1)+1)$ 
is a bijection from $
\{\alpha+\xi \, | \, \alpha\in O_F, N(\alpha+\xi)=n/3\}
$ to $N_n(\xi)$.
We define
\[
R_n(X,Y,Z)=
\sum_{\alpha \in N_n(\xi)}
\bar{\alpha}X(\alpha u)\, ,
\]
so we have
\[
R'_n(X,Y,Z)=\frac{n}{3}
\sum_{\alpha \in N_n(\xi)}
X'(\alpha u)\, ,
\]
and we can write
\[
F_1(u,v)=
\sum_{\alpha \in O_F+\xi}
\left[\begin{smallmatrix} X'(\alpha u) \\ \frac{2\pi}{\sqrt{3}}\bar{\alpha}X(\alpha u) \end{smallmatrix}\right]
q_v^{N(\alpha)}=
\sum_{n \geqslant 1}
\left[\begin{smallmatrix} 3\,R'_n(X,Y,Z)/n \\ \frac{2\pi}{\sqrt{3}}R_n(X,Y,Z) \end{smallmatrix}\right]q_v^{n/3}\, .
\]
We relate the polynomials  $R_n$ and $P_n$. For $n\equiv 0 \, (\bmod \, 3)$ we have
$R_n=0$; if $n\equiv 1\, (\bmod\, 3)$ we write 
$$
N_n(\xi)= \sqcup_{i=1}^j \xi \alpha_i \{ 1,\rho,\rho^2\}
$$
with $\alpha_i \equiv 1 (\bmod \sqrt{-3})$. We thus get 
$$
R_n(X,Y,Z)=\bar{\xi}P_n(X_0,Y_0,Z_0)/2 \qquad
\text{\rm 
with $X_0=X(\xi u), Y_0=Y(\xi u)$ and $Z_0=Z(\xi u)$.}
$$
For $F_2$ we use 
$$
S_n(X,Y,Z)=
\sum_{\alpha \in N_n(\xi)}
e^{\frac{2\pi}{\sqrt{3}}(\alpha\bar{\xi}-\bar{\alpha}\xi)}
\bar{\alpha}Y(\alpha u) \, ,
$$
so we obtain
$$
S'_n(X,Y,Z)=\frac{n}{3}
\sum_{\alpha \in N_n(\xi)}
e^{\frac{2\pi}{\sqrt{3}}(\alpha\bar{\xi}-\bar{\alpha}\xi)}
Y'(\alpha u) \, .
$$ 
We then have
$$
F_2(u,v)=
\sum_{\alpha \in O_F+\xi}
e^{\frac{2\pi}{\sqrt{3}}(\alpha\bar{\xi}-\bar{\alpha}\xi)}
\left[\begin{smallmatrix} Y'(\alpha u) \\ \frac{2\pi}{\sqrt{3}}\bar{\alpha}Y(\alpha u) \end{smallmatrix}\right]
q_v^{N(\alpha)}=
\sum_{n \geqslant 1}
\left[\begin{smallmatrix} 3\,S'_n(X,Y,Z)/n \\ \frac{2\pi}{\sqrt{3}}S_n(X,Y,Z) \end{smallmatrix}\right]q_v^{n/3}.
$$

Writing again $N_n(\xi)=\sqcup_{i=1}^j \xi \alpha_i \, \{1,\rho,\rho^2\}$
with $\alpha_i-1\in (3)$ we find 
$$
S_n(X,Y,Z)=3 \, \xi \, \sum_{i=1}^j \bar{\alpha}_i\, Y(\alpha_i\xi u)\, .
$$
Using the Shintani operators we find the first few $S_n$, where again we
use the notation 
$X_0=X(\xi u)$, $Y_0=Y(\xi u)$ and $Z_0=Z(\xi u)$:
\begin{align*}
S_1&=3\,\bar{\xi}Y_0,\qquad
S_4=-6\,\bar{\xi}Y_0(-Y_0^3+2\, Z_0^3),\qquad
S_7=-3\, \bar{\xi}Y_0(Y_0^6+14\, Y_0^3Z_0^3-14\, Z_0^6),\\
S_{13}&=3\, \bar{\xi}
Y_0(5\, Y_0^{12}-13\, Y_0^9Z_0^3+39\, Y_0^6Z_0^6-52\, Y_0^3Z_0^9+26\, Z_0^{12})\, . \\
%S_{16}&=12\, \bar{\xi}Y_0
%(-Y_{0}^{15}-8 Y_{0}^{12} Z_{0}^{3}+32 Y_{0}^{9} Z_{0}^{6}-28 Y_{0}^{6} Z_{0}^{9}+10 Y_{0}^{3} Z_{0}^{12}-4 Y_{0} Z_{0}^{15}),\\
%S_{19}&=-3\, \bar{\xi}
%Y_{0} \left(7 Y_{0}^{18}-57 Y_{0}^{15} Z_{0}^{3}+285 Y_{0}^{12} Z_{0}^{6}-418 Y_{0}^{9} Z_{0}^{9}+114 Y_{0}^{6} Z_{0}^{12}+114 Y_{0}^{3} Z_{0}^{15}-38 Z_{0}^{18}\right)
\end{align*}

Finally for $F_3$ we use 
$
F_3=-F_2\vert_{1,1}r^{-1}_2
$
and thus put $T_n(X,Y,Z)=S_n(X,Z,Y)$ and then can write
\[
F_3(u,v)=-
\sum_{\alpha \in O_F+\xi}
e^{\frac{2\pi}{\sqrt{3}}(\alpha\bar{\xi}-\bar{\alpha}\xi)}
\left[\begin{smallmatrix} Z'(\alpha u) \\ \frac{2\pi}{\sqrt{3}}\bar{\alpha}Z(\alpha u) \end{smallmatrix}\right]
q_v^{N(\alpha)}=-
\sum_{n \geqslant 1}
\left[\begin{smallmatrix} 3\,T'_n(X,Y,Z)/n \\ \frac{2\pi}{\sqrt{3}}T_n(X,Y,Z) \end{smallmatrix}\right]q_v^{n/3}.
\]
After these preparations we can calculate the beginning of the
Fourier-Jacobi expansion of $\chi_{4,4}$. The result is, after suitable normalisation,
\[
\chi_{4,4}=
\left[
\begin{matrix}
(\frac{\sqrt{3}}{2\pi})^4(3 \, c_1\rho^2(X'_0Y'_0Z'_0)\, q_v+O(q_v^2))\\
(\frac{\sqrt{3}}{2\pi})^3(c_1(2+\rho)(X_0Y'_0Z'_0+X'_0Y'_0Z_0+X_0Y'_0Z'_0)
\, q_v+O(q_v^2))\\
(\frac{\sqrt{3}}{2\pi})^2(-c_1X'q_v+O(q_v^2))\\
\frac{\sqrt{3}}{2\pi}(-\frac{c_1}{3}X q_v +4XX'q_v^2+O(q_v^4))\\
X^2q_v^{2}-6\,X^2YZq_v^{4} +O(q_v^5)
\end{matrix}
\right] \, ,
\]
where $c_1=X'(0)$. For the last coordinate one can calculate 
more terms. Indeed, this involves only the second coordinates
of the $F_i$ ($i=0,\ldots,3$) and one finds:
\begin{align*}
\chi_{4,4}^{(4)}=&X^2\bigg(
q_v^{2}-6\,YZq_v^{4} -20\,(Y^{3}+Z^{3})q_v^{5}+81\,Y^{2}Z^{2}q_v^{6}
+132\,(Y^{4}Z+YZ^{4}) q_v^{7}\\
&+(122\,Y^{6}-800\,Y^{3}Z^{3}+122\,Z^{6})q_v^{8}+
(-1020\,Y^{7}Z+1470\,Y^{4}Z^{4}-1020\,Y{Z}^{7} )q_v^{10}\\
&+( -76\,Y^{9}+1140\,Y^{6}Z^{3}+1140
\,Y^{3}Z^{6}-76\,Z^{9} ) q_v^{11}\\
& + ( -486\,Y^{8}Z
^{2}+486\,Y^{5}Z^{5}-486\,Y^{2}Z^{8} ) q_v^{12}\\
&+ (
3012\,Y^{10}Z-3924\,Y^{7}Z^{4}-3924\,Y^{4}Z^{7}+3012\,YZ^{10} )q_v^{13}\\
&+ ( -1261\,Y^{12}+266\,Y^{9}Z^{3}+4782
\,Y^{6}Z^{6}+266\,Y^{3}Z^{9}-1261\,Z^{12} ) q_v^{14}+ \ldots \bigg) \, . \\
%&+\left(3240 Y^{11} Z^{2}-4860 Y^{8} Z^{5}-4860 Y^{5} Z^{8}+3240 Y^{2} Z^{11}\right) q^{15}\\
%&+\left(-2124 Y^{13} Z +20232 Y^{10} Z^{4}-32400 Y^{7} Z^{7}+20232 Y^{4} Z^{10}-2124 YZ^{13}\right) q^{16}\\
%&+\left(1092 Y^{15}-21504 Y^{12} Z^{3}+14052 Y^{9} Z^{6}+14052 Y^{6} Z^{9}-21504 Y^{3} Z^{12}+1092 Z^{15}\right) q^{17}\\
%&+(6561 Y^{14} Z^{2}-13122 Y^{11} Z^{5}+19683 Y^{8} Z^{8}-13122 Y^{5} Z^{11}+6561 Y^{2} Z^{14}) q^{18}\\
%&+\left(4320 Y^{16}Z -62532 Y^{13}Z^{4}+62316 Y^{10} Z^{7}+62316 Y^{7} Z^{10}-62532 Y^{4} Z^{13}+
%4320 YZ^{16}\right) q^{19}
%+\ldots\big)
\end{align*}
\begin{remark}\label{thetabasis} 
To identify the terms in the Fourier-Jacobi expansions we
use the fact that we know a basis for the space of
theta functions of degree $3n$
on the elliptic curve ${\CC}/\Lambda$:
$$
\{ X^aY^bZ^c: 0\leq a \leq 2, \, 0 \leq b \leq n-a, \, a+b+c=n\}\, .
$$
We can use the Taylor expansion of an element in this space around the origin
to express it in terms of such a basis.
\end{remark}
\end{section}
%%%%%%%%%%%%%%%%%%%%%%%%%%%%%%%%%55
\begin{section}{Restriction to the curve $T_1$}\label{Restriction-section}
In order to know which covariants yield holomorphic modular forms
we need the expansion of the modular forms $\zeta$, 
$\chi_{4,4}$ and $E_{1,1}$ along $T_1$, given by $u=0$, 
the zero locus of $\zeta$.

We start with the expansions of the elliptic 
functions $X$, $Y$ and $Z$ near the origin of the
elliptic curve ${\CC}/\Lambda$. These have the form
$$
X(z)=\sum_{j\geqslant 0} c_{6j+1}z^{6j+1},\quad
Y(z)=\sum_{j\geqslant 0} d_{3j}z^{3j},\quad
Z(z)=\sum_{j\geqslant 0} (-1)^j\,d_{3j}z^{3j}. \eqno(14)
$$
Note that the functions $Y$ and $Z$ are normalised such that $d_0=1$.
Let $\xi=(\rho^2-1)/3=\rho\sqrt{-3}/3$ as before.
By \cite[Lemma 9, formulas (68--69)]{Fi} these functions satisfy
\[
Y^3(\xi z)=\frac{1}{\rho-1}(\rho Y(z)-Z(z)) \quad \text{and} \quad
Z^3(\xi u)=\frac{1}{\rho-1}(-Y(z)+\rho Z(z)) \, ,
\]
and with  $X^3=\rho(Y^3-Z^3)$ , we have
$
X^3(\xi z)=-\xi \, (Y(z)-Z(z)).
$
This relation provides links between the numbers $c_{6j+1}$ and $d_{6j+3}$, while
the relation $Y^3(\xi z)+Z^3(\xi z)=Y(z)+Z(z)$
provides the relation between the
$c_{6j+1}$ and $d_{6j}$. 
If follows that the numbers
$c_{6j+1}$ and $d_{3j}$ can be expressed in terms of powers of $c_1$.
For example, we have:
\begin{align*}
&c_7= 6\, \rho \, c_1^7/7!, \quad
c_{13} = -6^3\rho^2\, c_1^{13}/13!, \quad
c_{19}=-2^7 3^4 5\cdot 23\, c_1^{19}/19!,\\
&d_3= \rho^2 c_1^3/3!, \quad
d_6 = -2\, \rho c_1^6/6!, \quad
d_9 = -8\, c_1^9/9!, \quad
d_{12}= -2^3 \, 19\,\rho^2c_1^{12}/12!, \\
&d_{15}=2^3 5\cdot 31\, \rho \, c_1^{15}/15! \, .
\end{align*}
Here the constant $c_1$ is given by
$$
c_1=\Gamma(1/3)^{3}\,  e^{-17i \pi/18}/(2\, \pi)\, . \eqno(15)
$$
\begin{remark}
These numbers are related to the development of the modular form
$\vartheta$ around its zero $\tau_0=(1-\rho^2)/3$, see 
\cite[Prop.\ 17]{Zagier}.
\end{remark}
For the restriction to the 
curve $T_1$ of the modular forms  $\zeta$, $E_{1,1}$ and
$\chi_{4,4}$
we can apply Proposition \ref{quasi-modularity}.

The definition of the cusp form 
$\zeta \in S_{6}(\Gamma[\sqrt{-3},\det])$ yields by (14)
with $q_v=e^{2\pi \, v/\sqrt{3}}$
\[
\zeta(u,\sqrt{-3}\tau)=1/6\, \sum_{\alpha \in O_F}
\alpha^5X(\alpha u)q^{N(\alpha)}=
1/6\,\sum_{j\geqslant 0}
a_{6j+1}\Theta_{6(j+1)}(\tau)u^{6j+1}\, ,
\]
where
$\Theta_j(\tau)= \sum_{\alpha \in O_F}\alpha^jq^{N(\alpha)}\in M_{j+1}(\Gamma_1(3))$, as introduced in Section \ref{ModularCurves}.
Therefore with $w=c_{1} u$  the Taylor expansion of $\zeta$ about $u=0$ starts with
\begin{align*}
\zeta(u,\sqrt{-3}\tau)&=c_{1}\,\vartheta\eta^8\psi^2(\tau)u+
c_{7}\,\vartheta \eta^{8} \left(\eta^{16}+18 \psi^{4} \eta^{8}+729 \psi^{8}\right)u^7+\ldots\\
&=\vartheta\eta^8(\tau)
\bigg(\psi^2(\tau)w+
(\rho/840)\,(\eta^{16}+18 \psi^{4} \eta^{8}+729 \psi^{8})(\tau)w^7+\ldots
\bigg)\, .
\end{align*}
In a similar way, we obtain the development along the  curve $T_1$ 
of the Eisenstein series $E_{1,1}$:
\begin{align*}
E_{1,1}(u,\sqrt{-3}\tau)&=
\left[\begin{smallmatrix}
E^{(0)}_{1,1}(u,\sqrt{-3}\tau)\\
E^{(1)}_{1,1}(u,\sqrt{-3}\tau)
 \end{smallmatrix}\right]
=
\sum_{\alpha \in \mathcal{O}}
\left[\begin{smallmatrix} X'(\alpha u) \\ \frac{2\pi}{\sqrt{3}}\bar{\alpha}X(\alpha u) \end{smallmatrix}\right]
q^{N(\alpha)}\\
&=
\sum_{j\geqslant 0}
c_{6j+1}
\Big(
\left[\begin{smallmatrix}
(6j+1)\Theta_{6j}(\tau)\\
0
 \end{smallmatrix}\right]
u^{6j}+
\left[\begin{smallmatrix}
0\\
\frac{\Theta'_{6j}(\tau)}{\sqrt{-3}}
 \end{smallmatrix}\right]
u^{6j+1}
\Big)\, ,\\
\end{align*}
where the components $E^{(i)}_{1,1}$ have the following expansions in which
  the variable $\tau$ is omitted: 
\begin{align*}
E^{(0)}_{1,1}(u,\sqrt{-3}\tau)&=
c_1
\big(
\vartheta+
\rho\,\vartheta\psi^{2} \eta^{8}/20\, w^6+\ldots
\big)\\
E^{(1)}_{1,1}(u,\sqrt{-3}\tau)&=\frac{\pi}{6\sqrt{3}}
\big(
(108 \psi^{3}+\vartheta(e_2- \vartheta^2))w+
(\frac{\rho}{140})\,
\psi  \,\eta^{8}(5 \eta^{8}+243 \psi^{4}+7 e_2 \psi  \vartheta)\, w^7+\ldots
\big)
\end{align*}
For the cusp form $\chi_{4,4}$, we get
\[
\chi_{4,4}(u,\sqrt{-3}\tau)=
\left[\begin{smallmatrix} 
0 \\ 0 \\ h_0 \\ 0 \\ 0
 \end{smallmatrix}\right] +
\left[\begin{smallmatrix} 
0 \\ 0 \\ 0 \\ h_1 \\ 0
 \end{smallmatrix}\right] u+
 \left[\begin{smallmatrix} 
0 \\ 0 \\ 0 \\ 0 \\ h_2
 \end{smallmatrix}\right] u^2+
 \left[\begin{smallmatrix} 
h_4 \\ 0 \\ 0 \\ 0 \\ 0
 \end{smallmatrix}\right] u^4+
\left[\begin{smallmatrix} 
0 \\ h_5 \\ 0 \\ 0 \\ 0
 \end{smallmatrix}\right] u^5 +\ldots     \, ,
\]
where $h_0$ and $h_4$ are cusp forms on $\Gamma_1(3)$, 
while $h_1$, $h_2$ and $h_5$
are quasi-modular forms on $\Gamma_1(3)$. We set 
$$
\gamma= 2\pi/\sqrt{3}\, . \eqno(16)
$$ 
Then the $h_i$  are given by
\begin{align*}
h_0&=-\frac{c_1^2}{\gamma^2}\,\eta^8\psi^2; \quad
h_1=-\frac{c_1^2}{6\,\gamma}\,\eta^8\psi^2(\vartheta^2+e_2);\quad
h_2=\frac{c_1^2}{144}\, \eta^8\psi^2(3\, \vartheta^2+e_2)(\vartheta^2-e_2)\\
h_4&=-\rho\, \frac{c_1^8}{12\,\gamma^4} \eta^8\psi^2\vartheta^2; \quad
h_5=\rho\, \frac{c_1^8}{180\,\gamma^3} \eta^8\psi^2\vartheta(4\, \vartheta^3+5\, \vartheta e_2+54\, \psi^3).
\end{align*}
\begin{remark}
The cusp form $h_0$ is proportional to the cusp form
$f_6(\tau)=\eta^6(\tau)\eta^6(3\tau)$ of weight $6$
on $\Gamma_0(3)$ with Fourier expansion
\[
f_6(\tau)=q-6\, q^2+9\, q^3+4\, q^4+6\, q^5-54\, q^6-40\, q^7+168\, q^8+\ldots \, .
\]
This cusp form is one of the famous eta products and plays a similar role for $\Gamma_0(3)$
as the discriminant form $\Delta$ for ${\rm SL}(2,\ZZ)$.
\end{remark}

%%%%%%%%%%%%%%%%%
From the above expansions
we derive the order of vanishing along $T_1$ of $\chi_{4,4}$ and $\chi_{1,1}$.
This is used in Corollary \ref{orders}.

\begin{corollary}\label{orders2}
The order  of the five coordinates of $\chi_{4,4}$ along $u=0$ is $(4,5,0,1,2)$.
The order of the two coordinates of $E_{1,1}$ along $u=0$ is $(0,1)$.
\end{corollary}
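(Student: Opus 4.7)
The plan is to read off the orders directly from the explicit Taylor expansions of $E_{1,1}$ and $\chi_{4,4}$ along $u=0$ that were derived immediately above in this section, and then to verify that each claimed leading coefficient is nonzero as a (quasi-)modular form on $\Gamma_1(3)$.

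For $E_{1,1}$, the congruence $n \equiv m \pmod 6$ (coming from the action of $\rho \cdot 1_3$ and $\mathrm{diag}(1,1,\rho)$, exactly as used in the proof of Lemma \ref{pole-order}) forces the earliest possible nonzero slots in the Taylor expansion to be $(n,m)=(0,0)$ and $(n,m)=(1,1)$. The expansion above provides the corresponding leading terms $c_1\vartheta$ and $(\pi/6\sqrt{3})\bigl(108\psi^3+\vartheta(e_2-\vartheta^2)\bigr)\,w$. Since $c_1\neq 0$ by (15) and $\vartheta$ has no zeros on $\mathfrak{H}$, the top component has order $0$. For the bottom component, the Fourier expansions $\vartheta=1+O(q)$, $e_2=1-24q+O(q^2)$, $\psi^3=q+O(q^2)$ give $108\psi^3+\vartheta(e_2-\vartheta^2)=108q+O(q^2)\neq 0$, so the order is exactly $1$.

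For $\chi_{4,4}$, the expansion above displays candidate leading terms $h_0,h_1,h_2,h_4,h_5$ located at $(u^0,m=2),\,(u^1,m=3),\,(u^2,m=4),\,(u^4,m=0),\,(u^5,m=1)$ respectively; the congruence analogous to (9) for weight $(4,4)$ with character $\det^2$ rules out all intermediate slots. It remains to verify that each $h_i$ is nonzero on $\Gamma_1(3)$. The common factor $\eta^8\psi^2$ has no zeros on $\mathfrak{H}$, so this reduces to the nonvanishing of the additional scalar factor in each case, which is immediate from the Fourier expansions $\vartheta^2+e_2=2+O(q)$, $(3\vartheta^2+e_2)(\vartheta^2-e_2)=(4+O(q))(36q+O(q^2))$, $\vartheta^2=1+O(q)$, and $4\vartheta^3+5\vartheta e_2+54\psi^3=9+O(q)$.

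The main obstacle is not the nonvanishing checks, which are routine once the expansions are in hand, but rather the bookkeeping: one must confirm that the congruence conditions genuinely exclude every $(n,m)$ slot strictly below the ones at which we have exhibited a nonzero $h_i$. As an independent cross-check (and an alternative route avoiding a reexamination of the congruences), one can use that $\chi_{4,4}=c\cdot\zeta\cdot\chi_{4,-2}$ for some nonzero constant $c$: since $\zeta$ vanishes simply along each component of $T_1$, the orders $(3,4,-1,0,1)$ obtained for the components of $\chi_{4,-2}$ in the proof of Lemma \ref{pole-order} each increase by $1$, producing precisely $(4,5,0,1,2)$, in agreement with the reading from the explicit expansion.
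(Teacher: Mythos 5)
Your main argument coincides with the paper's: the paper gives no separate proof of this corollary beyond the sentence ``From the above expansions we derive the order of vanishing\dots'', so reading the orders off the displayed Taylor expansions, using the congruence on the slots $(n,m)$ to exclude everything below them, and checking that the leading coefficients $c_1\vartheta$, $108\psi^3+\vartheta(e_2-\vartheta^2)$ and $h_0,h_1,h_2,h_4,h_5$ are nonzero is exactly what is intended, and your bookkeeping is correct. One caveat: the ``independent cross-check'' via $\chi_{4,4}=c\,\zeta\,\chi_{4,-2}$ is circular as stated, because the exact orders $(3,4,-1,0,1)$ of the components of $\chi_{4,-2}$ are not established in the proof of Lemma \ref{pole-order} --- that proof only yields $(\geq 3,\geq 4,=-1,\geq 0,\geq 1)$ --- but are the content of Corollary \ref{orders}, which the paper deduces \emph{from} the present corollary; so that route re-confirms only the lower bounds and the exact value of the middle coordinate. (Also a harmless slip: $108\psi^3+\vartheta(e_2-\vartheta^2)=72q+O(q^2)$, not $108q+O(q^2)$; the nonvanishing conclusion is unaffected.)
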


\end{section}
%%%%%%%%%%%%%%%%%%%%%%%%%%%%%%%%%%%%%%%%5
\begin{section}{Construction of Modular Forms from Invariants}
In this section we shall use the  map $\nu : 
\mathcal{C}(V_4\oplus V_1) \to \mathbb{M}[1/\zeta]$ to construct modular
forms. 
Under $\nu$ a covariant $J_{a,b,c}$ 
of degree $(a,b,c)$ in the variables $(a_i,b_i,x_i)$ 
maps to a meromorphic modular form of weight
$(c,(3b-c)/2)$ on $\Gamma[\sqrt{-3}]$ with character 
$\epsilon^{a+b}\circ \text{det}^{2a+2b+2c}$, that is,
$$
\nu(J_{a,b,c}) \in \widetilde{M}_{c,3b/2,-a-b}(\Gamma[\sqrt{-3}])
$$
with the property that
$$
\nu(J_{a,b,c}) \quad \text{ is }
\begin{cases}
\mathfrak{S}_4\text{-invariant if } a+b\equiv 0 \bmod 2\\
\mathfrak{S}_4\text{-anti-invariant if } a+b\equiv 1 \bmod 2\, .
\end{cases}
$$
Here the tilda on $M$ refers to the meromorphicity along $T_1$.
In the following table we give for the twenty generating covariants
$J_{a,b,c}$ the weight $(j,k,l)$, the index $e=a+b (\bmod 2)$ and the
order of the coordinates of the meromorphic modular form
$\nu(J_{a,b,c})$ along $T_1$.

\begin{footnotesize}
\smallskip
\vbox{
\bigskip\centerline{\def\quad{\hskip 0.6em\relax}
\def\quod{\hskip 0.5em\relax }
\vbox{\offinterlineskip
\hrule
\halign{&\vrule#&\strut\quod\hfil#\quad\cr
height2pt&\omit&&\omit&&\omit&&\omit&&\omit&&\omit&\cr
%\noalign{\hrule}
&$(a,b,c)$ && $j$ && $k$ && $l$ && $e$ && order along $T_1$ &\cr
\noalign{\hrule}
&$(2,0,0)$ && $0$ && $0$ && $1$ && $0$ && $-2$ & \cr
&$(0,1,1)$ && $1$ && $1$ && $1$ && $1$ && $[0,1]$ & \cr
&$(3,0,0)$ && $0$ && $0$ && $0$ && $1$ && $-3$ & \cr
&$(1,4,0)$ && $0$ && $6$ && $1$ && $1$ && $1$ & \cr
&$(1,3,1)$ && $1$ && $4$ && $1$ && $0$ && $[0,1]$ & \cr
&$(1,2,2)$ && $2$ && $2$ && $1$ && $1$ && $[-1,0,1]$ & \cr
&$(1,1,3)$ && $3$ && $0$ && $1$ && $0$ && $[4,-1,0,1]$ & \cr
&$(1,0,4)$ && $4$ && $-2$ && $1$ && $1$ && $[3,4,-1,0,1]$ & \cr
&$(2,4,0)$ && $0$ && $6$ && $0$ && $0$ && $0$ & \cr
&$(2,3,1)$ && $1$ && $4$ && $0$ && $1$ && $[-1,0]$ & \cr
&$(2,2,2)$ && $2$ && $2$ && $0$ && $0$ && $[-2,-1,0]$ & \cr
&$(2,1,3)$ && $3$ && $0$ && $0$ && $1$ && $[3,-2,-1,0]$ & \cr
&$(2,0,4)$ && $4$ && $-2$ && $0$ && $0$ && $[2,3,-2,-1,0]$ & \cr
&$(3,6,0)$ && $0$ && $9$ && $0$ && $1$ && $0$ & \cr
&$(3,5,1)$ && $1$ && $7$ && $0$ && $0$ && $[-1,0]$ & \cr
&$(3,4,2)$ && $2$ && $5$ && $0$ && $1$ && $[4,-1,0]$ & \cr
&$(3,3,3)$ && $3$ && $3$ && $0$ && $0$ && $[3,4,-1,0]$ & \cr
&$(3,2,4)$ && $4$ && $1$ && $0$ && $1$ && $[2,3,4,-1,0]$ & \cr
&$(3,1,5)$ && $5$ && $-1$ && $0$ && $0$ && $[1,2,3,4,-1,0]$ & \cr
&$(3,0,6)$ && $6$ && $-3$ && $0$ && $1$ && $[6,1,2,3,4,-1,0]$ & \cr
} \hrule}
}}
\end{footnotesize}

\medskip

As we saw in Section \ref{sectionPMF} 
we have $M(\Gamma)=\CC[E_6,E_{12},E_9^2]$ and 
 first cusp form appears in weight 12 and is given by
$$ 
\chi_{12}=(E_6^2-E_{12})/5184\, .
$$
The cusp form $\chi_{12}$ is a Kudla lift of 
an element in $S^{-}_{11}(\Gamma_1(3))$ 
(see \cite[Prop.\ 10]{Fi}, or 
\cite[Section 11.3, Case 2b]{B-vdG}).
Moreover, there is a cusp form 
$$ 
\chi_{18}= (E_6^3-E_9^2)/3888\, .
$$

By calculating the expansions for some of these
$\nu(J_{a,b,c})$ one can identify sometimes the resulting modular forms.
We use here what was mentioned in Remark \ref{thetabasis}

Doing this for the generators $J_{a,b,0}$
one obtains the 
following proposition. Recall $\gamma=2 \pi/\sqrt{3}$ and $c_1=X'(0)$
as given in equations (15) and (16).
\begin{proposition}\label{nu-images}
The images under $\nu$ of the generators $J_{a,b,0}$ are;
\begin{align*}
\nu(J_{1,4,0})= \frac{3\, c_1^4}{70} \zeta \, , & \qquad
\nu(J_{2,0,0})= \frac{c_1^4}{6\gamma^4} \frac{\chi_{12}}{\zeta^2} \, , \qquad
\nu(J_{3,0,0})= \frac{c_1^6 }{864\, \gamma^6} \frac{\chi_{18}-E_6\chi_{12}}{\zeta^3}\, , \\
\nu(J_{2,4,0})&= -\frac{c_1^6}{5040\, \gamma^2} \, E_6 \, , \qquad
\nu(J_{3,6,0})= \frac{c_1^9}{798336 \, \gamma^3}\, E_9 \, . \\
\end{align*}
\end{proposition}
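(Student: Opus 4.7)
The plan is to apply Theorem~\ref{SubsThm} together with the structure of the ring of scalar-valued Picard modular forms recalled in Section~\ref{sectionPMF} to pin down each $\nu(J_{a,b,0})$ up to a constant, and then to fix that constant by matching one leading coefficient using the Fourier-Jacobi and Taylor expansions constructed in Sections~\ref{Construction}--\ref{Restriction-section}.

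By Theorem~\ref{SubsThm} each $\nu(J_{a,b,0})$ is a meromorphic scalar Picard modular form of weight $3b/2$, with $\mathfrak{S}_4$-parity $(-1)^{a+b}$, holomorphic outside $T_1$ and with order along $T_1$ given in the pole-order table at the start of this section. In the three cases where the covariant is already holomorphic, one lands in a one-dimensional space of modular forms: $\nu(J_{1,4,0})\in S_6(\Gamma[\sqrt{-3}],\det)^{\mathrm{anti}}=\CC\,\zeta$; $\nu(J_{2,4,0})\in M_6(\Gamma)=\CC\,E_6$; and $\nu(J_{3,6,0})\in M_9(\Gamma_1,\epsilon)=\CC\,E_9$. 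In the two meromorphic cases I multiply by the minimal power of $\zeta$ that clears the pole along $T_1$: $\zeta^2\,\nu(J_{2,0,0})$ lies in $S_{12}(\Gamma)=\CC\,\chi_{12}$, while $\zeta^3\,\nu(J_{3,0,0})$ lies in the two-dimensional space $S_{18}(\Gamma)=\CC\,E_6\chi_{12}\oplus\CC\,\chi_{18}$, so here one extra leading coefficient is needed to force the combination $\chi_{18}-E_6\chi_{12}$ rather than any other linear combination.

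The remaining and main task is to compute the five scalar constants. For this I would substitute the expansions $\chi_{4,-2}=\chi_{4,4}/\zeta=\sum\alpha_i X_1^{4-i}X_2^i$ and $E_{1,1}=\beta_1 X_1+\beta_2 X_2$ of Section~\ref{Construction} into the explicit polynomial $J_{a,b,0}$ and match the single lowest-order non-vanishing term of the resulting sum against the target form. The orders of the $\alpha_i$ and $\beta_i$ along $u=0$ collected in Corollary~\ref{orders}, together with the explicit Taylor expansions of $\zeta$, $E_{1,1}$ and $\chi_{4,4}$ spelled out in Section~\ref{Restriction-section}, deliver the required leading values. The powers of $c_1=X'(0)$ and of $\gamma=2\pi/\sqrt{3}$ appearing in the five claimed constants reflect respectively the combined degrees of the substituted $\alpha_i$'s and $\beta_i$'s and the normalisation of the lower components of the theta gradients $F_i$ introduced in Section~12. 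The main obstacle is precisely this normalisation bookkeeping; once a single leading term is correctly matched, the modular-theoretic constraints above force the entire identification.
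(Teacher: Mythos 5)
Your proposal is correct and follows essentially the same route as the paper: the paper's entire justification is to substitute the coordinates of $\chi_{4,4}/\zeta$ and $E_{1,1}$ into the invariants, compute the resulting Fourier--Jacobi and Taylor expansions, and identify the outcome inside the explicitly known rings of scalar-valued forms (via Remark~\ref{thetabasis}), which is exactly what you describe, with your weight/character/order bookkeeping and dimension counts making the finite coefficient-matching rigorous. The one detail to tidy is that cuspidality of $\zeta^2\nu(J_{2,0,0})$ and $\zeta^3\nu(J_{3,0,0})$ is asserted rather than proved --- it is load-bearing for your one-coefficient (resp.\ two-coefficient) match, but it falls out of the same leading Fourier--Jacobi computation (the constant term vanishes), or one can simply match one extra coefficient inside the slightly larger spaces $M_{12}(\Gamma)$ and $M_{18}(\Gamma)$.
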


\begin{remark}
As a check, one may apply 
$\nu$ to the relation 
$$
3J_{2,4,0}^3 + \frac{13068}{875}\, J_{3,6,0}^2 + 
J_{1,4,0}^3J_{3,0,0} - \frac{3}{2}\,J_{2,4,0}J_{1,4,0}^2J_{2,0,0}=0. \eqno(17)
$$
(see (6) in Section \ref{covariants})
and obtain
$$
\frac{c_1^{18}}{42674688000 \,\gamma^6}(-E_6^3 + E_9^2 + 3888\chi_{18})=0\, , \eqno(18)
$$
in agreement with the definition of $\chi_{18}$.
\end{remark}

\begin{remark}
The image of the discriminant 
$32(J_{2,0,0}^3-6\, J_{3,0,0}^2)$ of the quartic polynomial $f_4$ 
under $\nu$ is constant:
$$
\nu(32(J_{2,0,0}^3-6\, J_{3,0,0}^2))=-\frac{\rho\,  c_1^{12}}{3^3\, \gamma^{12}}\, .
$$
This comes about by the fact that the moduli space is obtained
by blowing up of the discriminant locus, cf.\ the diagram in
\cite[p.\ 6]{CFG2} and the discussion there.
\end{remark}
We finish this section with a result on the module of scalar-valued cusp
forms on $\Gamma$.
Since the group $\Gamma$ has a unique cusp, we have
$ \dim S_k(\Gamma)= \dim M_k(\Gamma) -1$ if $\dim M_k(\Gamma)>0$
and the generating series for the dimension of the spaces 
$S_k(\Gamma)$ is given by
\[
\sum_{k\geqslant 0} \dim S_k(\Gamma) t^k=
\frac{t^{12}+t^{18}-t^{30}}{(1-t^6)(1-t^{12})(1-t^{18})}=
 t^{12}+2\, t^{18}+3\, t^{24}+4\, t^{30}+6\, t^{36}+\ldots
\]
The first cusp form appears in weight 12, namely 
$\chi_{12}$.
Then we have
\begin{align*}
S_{18}(\Gamma)&=\text{Span}_{\CC}(E_6\chi_{12},\chi_{18}),\quad
S_{24}(\Gamma)=\text{Span}_{\CC}(E_6^2\chi_{12},E_6\chi_{18},\chi_{12}^2),\\
S_{30}(\Gamma)&=
\text{Span}_{\CC}(E_6^3\chi_{12},E_9^2\chi_{12},\chi_{12}\chi_{18},\chi_{12}^2,E_6^2\chi_{18},E_6^2\chi_{12}^2
)\, ,
\end{align*}
but in the last case, we have the relation
$
\chi_{12}(3888\, \chi_{18}-(E_6^3-E_9^2))=0
$
which comes from the relation (18) multiplied by 
$J_{1,4,0}^2J_{2,0,0}$ which corresponds to $\chi_{12}$. 
Here this relation actually counts as a relation between cusp forms.
\begin{corollary}
The $M(\Gamma)$-module ${\Sigma}(\Gamma)=\oplus_k S_k(\Gamma)$ 
of cusp forms on $\Gamma$ is generated by the forms $\chi_{12}$
and $\chi_{18}$ with the relation $3888\, \chi_{18}\chi_{12}-(E_6^3-E_9^2)\chi_{12}=0$ in weight $30$.
\end{corollary}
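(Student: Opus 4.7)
The plan is to rewrite the polynomial presentation of $M(\Gamma)$ in terms of $E_6$ together with the two cusp forms $\chi_{12}$ and $\chi_{18}$, and then to deduce both the generation and the single syzygy from elementary commutative algebra.

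First, from $\chi_{12}=(E_6^2-E_{12})/5184$ and $\chi_{18}=(E_6^3-E_9^2)/3888$ one reads $E_{12}=E_6^2-5184\,\chi_{12}$ and $E_9^2=E_6^3-3888\,\chi_{18}$, so the substitution $(E_6,E_{12},E_9^2)\mapsto(E_6,\chi_{12},\chi_{18})$ is an invertible (upper-triangular) polynomial change of coordinates. Combined with the identification $M(\Gamma)=\CC[E_6,E_{12},E_9^2]$ recalled in Section \ref{sectionPMF}, this gives $M(\Gamma)=\CC[E_6,\chi_{12},\chi_{18}]$ as a polynomial ring on three algebraically independent generators. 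Since $\Gamma$ has a unique cusp, evaluation of the constant Fourier coefficient at this cusp defines a $\CC$-algebra homomorphism ${\rm ev}\colon M(\Gamma)\to\CC$ whose kernel is $\Sigma(\Gamma)$. Because $\chi_{12}$ and $\chi_{18}$ are cusp forms while $E_6$ is an Eisenstein series, we have ${\rm ev}(\chi_{12})={\rm ev}(\chi_{18})=0$ and ${\rm ev}(E_6)\neq 0$; writing any $P\in\CC[E_6,\chi_{12},\chi_{18}]$ as a polynomial in $\chi_{12},\chi_{18}$ with coefficients in $\CC[E_6]$, the condition $P\in\ker({\rm ev})$ is exactly the vanishing of the $(\chi_{12},\chi_{18})$-free term, i.e.\ $P\in(\chi_{12},\chi_{18})$. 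This yields $\Sigma(\Gamma)=\chi_{12}\,M(\Gamma)+\chi_{18}\,M(\Gamma)$, which is the asserted generation statement.

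Next, since $E_6,\chi_{12},\chi_{18}$ are algebraically independent, the syzygy module of the ideal $(\chi_{12},\chi_{18})$ in $\CC[E_6,\chi_{12},\chi_{18}]$ is free of rank one on the Koszul relation $\chi_{18}\chi_{12}-\chi_{12}\chi_{18}=0$, living in weight $30$. Substituting $3888\,\chi_{18}=E_6^3-E_9^2$ in the first coefficient converts this into the equivalent relation $3888\,\chi_{18}\chi_{12}-(E_6^3-E_9^2)\chi_{12}=0$ stated in the corollary; in invariant-theoretic terms this is the image under $\nu$ of $J_{1,4,0}^2 J_{2,0,0}$ times the relation~(18), as indicated in the paragraph preceding the corollary.

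As a consistency check I would verify that the Hilbert series of a two-generator/one-relation module with generators in weights $12,18$ and the single relation in weight $30$ is $(t^{12}+t^{18}-t^{30})/((1-t^6)(1-t^{12})(1-t^{18}))$, matching the generating series for $\dim S_k(\Gamma)$ already recorded at the start of the section; this rules out any further syzygies. The only point of the plan requiring a little care is the identification $\ker({\rm ev})=(\chi_{12},\chi_{18})$, but this follows immediately once one knows that $\Gamma$ has a single cusp and that ${\rm ev}(E_6)\neq 0$.
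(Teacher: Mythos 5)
Your proof is correct, and it is genuinely more structural than the paper's. The paper argues by bookkeeping: it derives the Hilbert--Poincar\'e series of $\Sigma(\Gamma)$ from $\dim S_k(\Gamma)=\dim M_k(\Gamma)-1$, exhibits explicit spanning sets of $S_{18}$, $S_{24}$, $S_{30}$ as products of $E_6,E_9^2,E_{12}$ with $\chi_{12}$ and $\chi_{18}$, and identifies the weight-$30$ relation as the image under $\nu$ of the covariant identity (18) multiplied by $J_{1,4,0}^2J_{2,0,0}$; the absence of further generators and syzygies is then read off from the matching generating series. You instead observe that the triangular substitution $E_{12}=E_6^2-5184\,\chi_{12}$, $E_9^2=E_6^3-3888\,\chi_{18}$ turns $M(\Gamma)=\CC[E_6,E_{12},E_9^2]$ into the polynomial ring $\CC[E_6,\chi_{12},\chi_{18}]$, identify $\Sigma(\Gamma)$ with the ideal $(\chi_{12},\chi_{18})$ via the evaluation at the unique cusp (using that this is a graded algebra homomorphism killing the two cusp forms but not $E_6$), and then invoke the Koszul syzygy of a regular sequence of two variables to get both generation and the uniqueness of the weight-$30$ relation at once. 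This is a complete argument that needs no dimension computations beyond the single-cusp fact, whereas the paper's version keeps the relation anchored to its invariant-theoretic origin, which is the point of that section. Two minor things worth making explicit: the evaluation map is only a well-defined test for cuspidality weight by weight, so the identification $\ker(\mathrm{ev})=(\chi_{12},\chi_{18})$ should be phrased for homogeneous elements (where the $(\chi_{12},\chi_{18})$-free term is a monomial $cE_6^{k/6}$); and $\mathrm{ev}(E_6)\neq 0$ is most cleanly justified by $S_6(\Gamma)=0$, which follows from $\dim M_6(\Gamma)=1$ and the dimension formula already recorded in the paper.
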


\end{section}
%%%%%%%%%%%%%%%%%%%%%%%%%%%%%%%%%%%%%%%%5
\begin{section}{The structure of a module for $j=4$}
To show the feasibility of constructing modular forms by covariants,
as an application we determine the structure of the $M(\Gamma)$-module
$$
\mathcal{M}_4^{2}(\Gamma)=
\oplus_{k\geqslant 0} M_{4,k}(\Gamma,\text{det}^2)\, .
$$
With the same method one can also treat the modules
$$
\mathcal{M}_4^{l}(\Gamma)=
\oplus_{k\geqslant 0} M_{4,k}(\Gamma,\text{det}^l)
$$
for $l=0$ and $l=1$, but we refrain from giving the details.

The structure of the modules 
$\mathcal{M}_j^l(\Gamma)= \oplus_k M_{j,k}(\Gamma,\text{det}^l)$
for $j<4$ was determined in \cite{C-vdG} in a different manner, 
but it would be very difficult to go beyond these cases that way.
Invariant theory provides a good way to build generators.

For the cusp forms, we use the notation
$\Sigma_4^l(\Gamma)$ or $\Sigma_4^l(\Gamma[\sqrt{-3}])=
\oplus_{k\geqslant 0} 
S_{4,k}(\Gamma[\sqrt{-3}],\text{det}^l)$.
Note (see \cite[Proposition 5.1]{C-vdG})
that 
$$
\mathcal{M}_4^{l}=\Sigma_4^l \quad \text{\rm  if $l \not \equiv 1 \bmod 3$.}
$$ 
Recall that if $k\not \equiv 1 \bmod 3$ then
$M_{4,k}(\Gamma[\sqrt{-3}],\text{det}^l)=(0)$.

\begin{theorem}\label{Thm-Sigma-4-2}
The $M(\Gamma)$-module $\Sigma^2_4$ is freely generated by
cusp forms of weight $(4,4)$, $(4,10)$, $(4,16)$, $(4,22)$
and $(4,28)$.
\end{theorem}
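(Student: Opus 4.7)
\textbf{Proof plan for Theorem \ref{Thm-Sigma-4-2}.}

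The plan is to prove the theorem by matching Hilbert series on the two sides. First, using the dimension formulas of Bergstr\"om--van der Geer \cite{B-vdG} for the spaces $S_{j,k,l}(\Gamma[\sqrt{-3}])$, together with the isomorphism $\Gamma/\Gamma_1[\sqrt{-3}]\cong\mathfrak{S}_4\times\mu_6$, one extracts the dimensions of $S_{4,k}(\Gamma,\det^2)$ as the appropriate $\mathfrak{S}_4\times\mu_6$-isotypic component. The target Hilbert series, under the assumption that $\Sigma_4^2$ is freely generated by cusp forms of weights $4,10,16,22,28$, is
$$
\sum_{k\geq 0}\dim S_{4,k}(\Gamma,\det^2)\,t^k \;=\; \frac{t^4+t^{10}+t^{16}+t^{22}+t^{28}}{(1-t^6)(1-t^{12})(1-t^{18})},
$$
and the first step is to verify this identity from the B--vdG dimension formulas. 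A convenient sanity check is that all non-vanishing dimensions are concentrated in weights $k\equiv 4\pmod 6$, which is forced both by the character restriction (recorded in Theorem \ref{SubsThm}, via $(3b-c)/2$ with $c=4$ and the parity/character constraints) and by the generating series above.

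Second, construct five explicit cusp forms at the asserted weights using the substitution map $\nu\colon\mathcal C(V_4\oplus V_1)\to\MM[1/\zeta]$ of Theorem \ref{SubsThm}. The generator in weight $(4,4)$ is $\chi_{4,4}=\zeta\,\chi_{4,-2}$, already established in Section \ref{Construction}. For weights $(4,10),(4,16),(4,22),(4,28)$ one applies $\nu$ to covariants $J_{a,b,c}$ with $c=4$ and $b\in\{4,6,8,10\}$ (or to products of such covariants with the invariants $J_{2,4,0}$, $J_{3,6,0}$, etc.) and then multiplies by a suitable power of $\zeta$ to cancel the pole orders along $T_1$ listed in the covariant table at the end of Section 9. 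The needed holomorphicity checks reduce to comparing the orders in the table with ${\rm ord}_{T_1}\zeta=1$, so each candidate is visibly regular as soon as enough factors of $\zeta$ have been inserted. The cusp property can be read off from the Fourier--Jacobi expansion built from the explicit coefficients of $\chi_{4,4}$ and $E_{1,1}$ computed in Sections~12--13.

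Third, show that these five forms, together with their $M(\Gamma)$-multiples, span $\Sigma_4^2$. The cleanest way is to verify that at each weight $k$ in the relevant range, the $M(\Gamma)$-span of the products of the five generators with monomials in $E_6,E_{12},E_9^2$ has dimension equal to $\dim S_{4,k}(\Gamma,\det^2)$ predicted by the B--vdG formula. This is a finite check once $k$ passes the effective saturation bound coming from the denominator $\mathrm{lcm}(6,12,18)=36$: matching dimensions in each weight up to roughly $28+36$ suffices by standard commutative algebra, since the target is a finitely generated graded module over $M(\Gamma)$. Linear independence of the five generators modulo $M_+(\Gamma)\cdot\Sigma_4^2$ is controlled by inspecting their leading Fourier--Jacobi coefficients and, if necessary, their Taylor terms along $T_1$ obtained via Proposition \ref{quasi-modularity} and the data of Corollary \ref{orders2}.

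Finally, freeness follows formally: once the five forms surject onto $\Sigma_4^2$ as an $M(\Gamma)$-module and the Hilbert series of $\bigoplus_{i=1}^{5} M(\Gamma)[-k_i]$ agrees with that of $\Sigma_4^2$, the surjection $M(\Gamma)^5\twoheadrightarrow\Sigma_4^2$ is an isomorphism. The main obstacle I expect is the second step, namely producing five explicit covariants (or polynomial combinations thereof) whose images under $\nu$, after clearing poles by $\zeta$, are demonstrably independent modulo $M_+(\Gamma)\cdot\Sigma_4^2$; the higher-weight generators are not unique and must be chosen so that the leading Fourier--Jacobi coefficients witness independence from the products $E_6\cdot\chi_{4,4},\,E_{12}\cdot\chi_{4,4},\,E_9^2\cdot\chi_{4,4},\,E_6\cdot\chi_{4,10},\dots$ already accounted for.
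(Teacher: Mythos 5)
Your overall architecture (compute the Hilbert--Poincar\'e series of $\Sigma_4^2$ from the dimension formulas of \cite{B-vdG}, build five candidate generators by applying $\nu$ to covariants and clearing poles with $\zeta$, then conclude freeness from the Hilbert series) matches the paper's, and your target series and generator weights are correct. But the step where you establish that the five forms actually \emph{generate} $\Sigma_4^2$ has a genuine gap. You propose to verify surjectivity of $M(\Gamma)^5\to\Sigma_4^2$ by matching dimensions weight by weight ``up to roughly $28+36$'' and appeal to ``standard commutative algebra.'' No such effective saturation bound follows from the Hilbert series alone: a graded module over $\CC[E_6,E_{12},E_9^2]$ can have a generator and a relation in an arbitrarily high common degree that cancel in the Hilbert series, so agreement of graded pieces in any finite range does not imply surjectivity unless one first bounds the degrees of a generating set of $\Sigma_4^2$ over $M(\Gamma)$ --- and you do not produce such a bound. (Lemma \ref{4-7-10-lemma} bounds generators of $\Sigma_4^2(\Gamma[\sqrt{-3}])$ over $M(\Gamma[\sqrt{-3}])$, but passing from that to generation of the $\mathfrak{S}_4$-invariant part over the invariant ring $M(\Gamma)$ is not automatic.)

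The paper closes the argument in the opposite direction, and this is the idea your proposal is missing: it proves the five forms are linearly independent over $M(\Gamma)$ by computing their exterior product explicitly, $\bigwedge_{k=1}^{5}\chi_{4,6k-2}=-\tfrac{c_1^{10}}{2\gamma^{10}}\,\rho^2\,\zeta^{12}E_9^2\neq 0$ (Lemma \ref{wedge}). Since $M(\Gamma)$ is a polynomial ring, hence a domain, the submodule generated by the five forms is then free of rank $5$ with the prescribed generator degrees, so its Hilbert series coincides with that of $\Sigma_4^2$, forcing equality; this one determinant computation --- reduced via $\nu$ to an identity among covariants and the known images $\nu(J_{1,4,0})$, $\nu(J_{3,6,0})$ and $\nu(J_{2,0,0}^3-6J_{3,0,0}^2)$ --- delivers independence and spanning simultaneously. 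Your weight-by-weight Fourier--Jacobi comparisons are still useful, but only for selecting covariants whose images are not already in the span of $E_6$, $E_{12}$, $E_9^2$ times the earlier generators. A smaller point you elide: the formula $\dim S_{4,1+3k}(\Gamma[\sqrt{-3}],\det^2)=k(5k+1)/2-2$ holds only for $k\geq 1$, and the vanishing of $S_{4,1}(\Gamma[\sqrt{-3}],\det^2)$ requires a separate argument (the paper restricts to a component of $T_1$).
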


\begin{proof}
We begin the proof by deducing the Hilbert--Poincar\'e series for
the module $\Sigma_4^2$. For this we start with $\Gamma[\sqrt{-3}]$.
The dimension of the space 
$S_{4,1+3k}(\Gamma[\sqrt{-3}],\text{det}^2)$ 
is given by (see \cite[Thm.\ 4.7]{B-vdG})
\[
\dim S_{4,1+3k}(\Gamma[\sqrt{-3}],\text{det}^2)=k(5k+1)/2-2
\]
for $k \geqslant 1$. 
One can show that $\dim S_{4,1,2}(\Gamma[\sqrt{-3}])=0$, for example by the following argument.
By multiplication with $E_6$ and the knowledge of $S_{4,7,2}(\Gamma[\sqrt{-3}])$
as  a $\mathfrak{S}_4$-module, we see that
only the $s[3,1]$ and $s[2,1,1]$ components can be non-zero.
Restricting to a component of $T_1$ is injective since such a component is the zero locus of
a form of weight $1$, and dividing would give a non-zero form of weight $(4,0)$ on some congruence subgroup.
The fact that $\dim S_3(\Gamma_0(3),\big(\frac{\cdot}{3}\big)))=2$ and $\dim s[3,1]=\dim s[2,1,1]=3$,
now shows that $\dim S_{4,1,2}(\Gamma[\sqrt{-3}])=0$.

The Hilbert--Poincar\'e series for the dimensions  
is therefore given by
\[
\sum_{k \geqslant 0} \dim S_{4,1+3k}(\Gamma[\sqrt{-3}],\text{det}^2)t^{1+3k}=
\frac{t^4+6\, t^7-2\, t^{10}}{(1-t^3)^3} \, .
\]
\begin{lemma}\label{4-7-10-lemma}
The $M(\Gamma[\sqrt{-3}])$-module $\Sigma_4^2(\Gamma[\sqrt{-3}])$
is generated by a generator of type $s[4]$ in weight $(4,4)$,
generators of type $s[3,1]$ and $s[2,1,1]$ in weight $(4,7)$
and a relation of type $s[2,2]$ in weight $(4,10)$.
\end{lemma}
\begin{proof}
This can be proved using results of \cite{B-vdG} as in \cite{C-vdG}.
\end{proof}
Writing the isotypical decomposition of $M_{3k}(\Gamma[\sqrt{-3}])=
{\rm Sym}^k(s[2,1,1])$ as 
$$
\Sym^k(s[2,1,1])=a_k\,s[4]+b_k\,s[3,1]+c_k\,s[2,2]+d_k\,s[2,1,1]+e_k\,s[1,1,1,1]
$$
we get by Lemma \ref{4-7-10-lemma} for $k\geqslant 1$
\[
S_{4,6k-2}(\Gamma[\sqrt{-3}],\text{det}^2)=(a_{2k-2}+b_{2k-3}+d_{2k-3}-c_{2k-4})\, s[4]+\ldots
\]
The generating series of the numbers 
$a_{k}$, $b_{k}$, $c_{k}$, $d_{k}$ and  $e_{k}$  
are given by the generating series 
$N/(1-t)(1-t^2)(1-t^3)(1-t^4)$ with $N$  as in the next table.

\begin{footnotesize}
\smallskip
\vbox{
\bigskip\centerline{\def\quad{\hskip 0.6em\relax}
\def\quod{\hskip 0.5em\relax }
\vbox{\offinterlineskip
\hrule
\halign{&\vrule#&\strut\quod\hfil#\quad\cr
height2pt&\omit&&\omit&&\omit&&\omit&&\omit&&\omit&\cr
%\noalign{\hrule}
&&& $a_k$ && $b_k$ && $c_k$ && $d_k$ && $e_k$ &\cr
\noalign{\hrule}
height2pt&\omit&&\omit&&\omit&&\omit&&\omit&&\omit&\cr
&$N$ && $(1-t)(1-t^3+t^6)$ && $t^2(1-t^3)$ && $(1-t)t^2(1+t^2)$ && $(t-t^2+t^3)(1-t^3)$ && $t^3(1-t)$ & \cr
} \hrule}
}}
\end{footnotesize}

\smallskip
\noindent
This leads to 
the following generating series for the dimension of the spaces $S_{4,6k-2}(\Gamma,\text{det}^2)$:
$$
\begin{aligned}
\sum_{k\geqslant 1} \dim S_{4,6k-2}(\Gamma,\text{det}^2)t^{6k-2}&= 
\frac{t^4+t^{10}+t^{16}+t^{22}+t^{28}}{(1-t^6)(1-t^{12})(1-t^{18})}\\
&=
t^4+2\, t^{10}+4\, t^{16}+7\, t^{22}+11\, t^{28}+O(t^{34})\, .
\end{aligned}
$$

Now we turn to the construction by covariants of the generators of weights
($4,4)$, $(4,10)$, $(4,16)$, $(4,22)$ and $(4,28)$.
 
The form of weight $(4,4)$ is already available:
$$
\chi_{4,4}= \frac{4900}{3\, c_1^4} \, \nu(J_{1,0,4}J_{1,4,0})
$$
and for later use we observe   that
the Fourier-Jacobi of its last component starts with

\begin{align*}
\chi_{4,4}^{(4)}(u,v)&=
X^2\big( 
q_v^{2}-6\,YZ\, q_v^{4} -20\,(Y^{3}+Z^{3})\, q_v^{5}+81\,Y^{2}Z^{2}\, q_v^{6}+132(Y^4Z+YZ^4)\, q_v^7\\
&+(122\, Y^6-800\, Y^3Z^3+122\, Z^6)\, q_v^8-(1020\, Y^7Z-1470\, Y^4Z^4+
1020\, YZ^7)\, q_v^{10}\\
&+(-76\, Y^9+1140\, Y^6Z^3+1140\, Y^3Z^6-76\, Z^9)\, q_v^{11}
+\ldots\big) \, .
 \end{align*}

Next we construct a generator $\chi_{4,10}$ of weight $10$. Note
that $\dim S_{4,10}(\Gamma,{\det}^2)=2$ and we know already a form
of weight $(4,10)$, namely
$E_6\,  \chi_{4,4}$. 

The three covariants 
$J_{2,4,0}J_{1,4,0}J_{1,0,4}$, $J_{1,4,0}^2J_{2,0,4}$ and
$J_{1,4,0}J_{0,1,1}J_{3,3,3}$ produce modular forms in $S_{4,10}(\Gamma,\text{det}^2)$.
But we have the following relation
$$
132\,J_{0,1,1}J_{3,3,3}+175(J_{2,4,0}J_{1,0,4}-J_{1,4,0}J_{2,0,4})=0\, .
$$
We know $\nu(J_{2,4,0})= - (c_1^6/5040 \gamma^2) E_6$. We set
$$
\chi_{4,10}=-\frac{2744000\, \gamma^2}{c_1^{10}}\nu(J_{1,4,0}^2J_{2,0,4}).
$$
One checks holomorphicity using the table in Section \ref{Construction}. 
The Fourier-Jacobi expansion of its last component starts with
\begin{align*}
\chi_{4,10}^{(4)}=&
X^2\big(
q_v^2+54\, YZ\, q_v^4-272(Y^3+Z^3)\, q_v^5+405\, Y^2Z^2\, q_v^6+
3024(Y^4Z+YZ^4)\, q_v^7\\
&+(4406\, Y^6-15560\, Y^3Z^3+4406\, Z^6)\, q_v^8-23328(Y^5Z^2+Y^2Z^5)\, q_v^9\\
&-(62748\, Y^7Z-221022\, Y^4Z^4+62748\, YZ^7)\, q_v^{10}\\
&-(22000\, Y^9-16368\, Y^6Z^3-16368\, Y^3Z^6+22000\, Z^9)\, q_v^{11}
+\ldots\big)\, .
 \end{align*}

The Fourier-Jacobi expansions of the last components of $E_6\chi_{4,4}$ and $\chi_{4,10}$ start with
$$
E_6\chi_{4,4}^{(4)}=X^2q_v^2+750\,X^2YZq_v^4+\ldots , \qquad
\chi_{4,10}^{(4)}=X^2q_v^2+54\,X^2YZq_v^4+\ldots \, .
$$
and this shows that they generate the space $S_{4,10}(\Gamma,\text{det}^2)$.

For the generator of weight $(4,16)$ we note that 
$\dim S_{4,16}(\Gamma,{\det}^2)=4$ and we have already 
three linearly independent elements 
$E_6^2 \chi_{4,4}$, $E_{12} \chi_{4,4}$ and $E_6\chi_{4,10}$.
We now put

$$
\chi_{4,16}=\frac{2304960000 \, \gamma^4}{c_1^{16}}
\nu(J_{1,4,0}^2(6\,J_{2,1,3}J_{2,3,1}-J_{2,0,0}J_{1,1,3}J_{1,3,1}))\, .
$$
We observe that the
Taylor expansion of $\nu(6\,J_{2,1,3}J_{2,3,1}-J_{2,0,0}J_{1,1,3}J_{1,3,1})$
along $T_1$ starts with
\begin{align*}
\nu&(6\,J_{2,1,3}J_{2,3,1}-J_{2,0,0}J_{1,1,3}J_{1,3,1})(u,\sqrt{-3}\tau)=\\
&\frac{c_1^8}{705600\gamma^6}
\Bigg(
\left[\begin{smallmatrix}
0 \\ 0 \\ \vartheta^4 \\ 0 \\ 0
 \end{smallmatrix}\right]u^{-2}+
 \gamma/6
\left[\begin{smallmatrix}
0 \\ 0 \\ 0 \\ \vartheta^3(3\vartheta^3+\vartheta e_2-108\psi^3)\\ 0
 \end{smallmatrix}\right]u^{-1} +\ldots
\Bigg) \, ,
\end{align*}
so the multiplication by $\nu(J_{1,4,0})^2$, proportional to $\zeta^2$, makes it holomorphic along $T_1$.

The Fourier-Jacobi expansion of the last component of $\chi_{4,16}$  starts with
\begin{align*}
\chi_{4,16}^{(4)}=&
X^2\big(
q_v^2+162\, YZq_v^4+3040(Y^3+Z^3)q_v^5+43497\, Y^2Z^2q_v^6-2592(Y^4Z+YZ^4)q_v^7\\
&-(298462\, Y^6+263600\, Y^3Z^3+298462\, Z^6)q_v^8-839808(Y^5Z^2+Y^2Z^5)q_v^9\\
&+(2185380\, Y^7Z-127170\, Y^4Z^4+2185380\, YZ^7)q_v^{10}\\
&+(4366688\, Y^9+8413152\, Y^6Z^3+8413152\, Y^3Z^6+4366688\, Z^9)q_v^{11}
+\dots \big)\, .
\end{align*}
The Fourier-Jacobi expansions of the last component of
$E_6^2\chi_{4,4}$, $E_{12}\chi_{4,4}$,  $E_6\chi_{4,10}$ and $\chi_{4,16}$ start with
\begin{align*}
E_6^2\chi_{4,4}^{(4)}&=X^2q_v^2 + 1506\,X^2YZq_v^4 + 4012\,X^2(Y^3+Z^3)q_v^5+603369\,X^2Y^2Z^2q_v^6+\ldots \\
E_{12}\chi_{4,4}^{(4)}&=X^2q_v^2 -3678\,X^2YZq_v^4 + 35116\,X^2(Y^3+Z^3)q_v^5+354537\,X^2Y^2Z^2q_v^6+\ldots\\
E_6\chi_{4,10}^{(4)}&=X^2q_v^2+810\,X^2YZq_v^4 + 1744\,X^2(Y^3+Z^3)q_v^5+61641\,X^2Y^2Z^2q_v^6+\ldots\\
\chi_{4,16}^{(4)}&=X^2q_v^2+162\,X^2YZq_v^4 + 3040\,X^2(Y^3+Z^3)q_v^5+43497\,X^2Y^2Z^2q_v^6+\ldots
\end{align*}
showing that these generate $S_{4,16}(\Gamma, {\det}^2)$.

Before we construct the last two generators we need a lemma.

\begin{lemma}\label{three_forms}
We have
$$
\begin{aligned}
\nu(J_{1,4,0}^2(J_{3,0,0}J_{1,3,1}-J_{2,0,0}J_{2,3,1}) & 
\in M_{1,16}(\Gamma)\, ,\\
\nu(J_{1,4,0}J_{1,1,3}) & \in M_{3,6}(\Gamma,{\det}^2)\, , \\
\nu(J_{1,4,0}J_{0,1,1}J_{3,4,2}) & \in M_{3,12}(\Gamma,{\det}^2) \, ,
\end{aligned}
$$
and these three forms are $\mathfrak{S}_4$-invariant.
\end{lemma}
For the proof one calculates the Taylor expansion along $T_1$ as done
for the examples above.

In order to get a form of weight $(4,22)$ we set
$$
\chi_{4,22}=\frac{53782400000\,\gamma^6}{3\, c_1^{22}}\nu(J_{1,4,0}^3J_{1,1,3}(J_{3,0,0}J_{1,3,1}-J_{2,0,0}J_{2,3,1}))
$$
and by applying Lemma \ref{three_forms}, 
we see that $\chi_{4,22}\in S_{4,22}(\Gamma,{\det}^2)$.

The Fourier-Jacobi expansion of its last component starts with
\begin{align*}
\chi_{4,22}^{(4)}=& X^2\big(
YZq_v^4+9(Y^3+Z^3)\, q_v^5+60\, Y^2Z^2q_v^6-277(Y^4Z+YZ^4)\, q_v^7\\
& -(6363\, Y^6-9468\, Y^3Z^3+6363\, Z^6)\, q_v^8
 +2106(Y^5Z^2+Y^2Z^5)\, q_v^9\\ 
&+(15128\, Y^7Z+27844\, Y^4Z^4+15128\, YZ^7)\, q_v^{10}+\\
&(276471\, Y^9-212895\, Y^6Z^3-212895\, Y^3Z^6+276471\, Z^9)\, q_v^{11}+\ldots
\big)\, .
\end{align*}
By using the Fourier-Jacobi of the last component of
$E_6^3\chi_{4,4}$, $E_6E_{12}\chi_{4,4}$, $E_9^2\chi_{4,4}$, $E_6^2\chi_{4,10}$, $E_{12}\chi_{4,10}$, $E_6\chi_{4,16}$
and $\chi_{4,22}$, we check that they are linearly independent 
so they span the space $S_{4,22}(\Gamma,{\det}^2)$ that is of
dimension $7$.

For the generator of weight $(4,28)$ we put
$$
\chi_{4,28}=
-\frac{51114792960000 \,\gamma^{8} }{c_{1}^{28}}
\nu(J_{1,4,0}^{3}J_{0,1,1} J_{3,4,2}\left(J_{3,0,0}J_{1,3,1} -J_{2,0,0} J_{2,3,1} \right))
$$
and by applying Lemma \ref{three_forms} 
we see that $\chi_{4,28}\in S_{4,28}(\Gamma,{\det}^2)$.
The Fourier-Jacobi expansion of its last component starts with
\begin{align*}
\chi_{4,28}^{(4)}=&
X^2\big(
YZ\, q_v^4+9(Y^3+Z^3)\, q_v^5-384\, Y^2Z^2q_v^6-7117(Y^4Z+YZ^4)\, q_v^7\\
&+(-31959\, Y^6-92592\, Y^3Z^3-31959\, Z^6)q_v^8-274698(Y^5Z^2+Y^2Z^5)\, q_v^9\\
&+(3511880\, Y^7Z-4338416\, Y^4Z^4+3511880\, YZ^7)\, q_v^{10}\\
&+(18226071\, Y^9-5450355\, Y^6Z^3-5450355\, Y^3Z^6+18226071\, Z^9)\, q_v^{11}
+\ldots
\big)\, .
\end{align*}
By using the Fourier-Jacobi of the last components of
$E_6^4\chi_{4,4}$, $E_6^2E_{12}\chi_{4,4}$, $E_6E_9^2\chi_{4,4}$, $E_{12}^2\chi_{4,4}$,
$E_{6}^3\chi_{4,10}$, $E_{6}E_{12}\chi_{4,10}$, $E_9^2\chi_{4,10}$,
$E_6^2\chi_{4,16}$, $E_{12}\chi_{4,16}$,
$E_6\chi_{4,22}$
and $\chi_{4,28}$, we check that they are linearly independent,
so they span the $11$-dimensional space $S_{4,28}(\Gamma,{\det}^2)$.

\begin{lemma}\label{wedge} The exterior product of our generators
satisfies
\[
\bigwedge_{k=1}^{5}
\chi_{4,6k-2}=
-\frac{c_1^{10}}{2\, \gamma^{10}}\, \rho^2\,
\zeta^{12}\,E_9^2 \, .
\]
\end{lemma}
\begin{proof}
We first note that the exterior product of the five forms  $\chi_{4,6k-2}$ for $k\in\{1,2,3,4,5\}$, which take values in
$\Sym^4(\CC^2)\simeq\CC^5$, can be viewed as the determinant 
of the five components of these five forms,
and viewing a covariant of degree 4 in $x_1, x_2$ as a vector of 
size $5$ whose $(i+1)$th component is the coefficient
of $x_1^{4-i}x_2^i$, $0\leqslant i  \leqslant 4$, we have
\begin{align*}
\quad \bigwedge_{k=1}^{5}
\chi_{4,6k-2}&=
2^{41}\cdot 3^2\cdot 5^{18}\cdot 7^{19}\cdot 11 \frac{\gamma^{20}}{c_1^{80}}\, 
\nu(J_{1,4,0})^{11} \, \times \\
\nu(\det(&
J_{1,0,4}J_{2,0,4}J_{1,1,3}J_{0,1,1} J_{3,4,2}
(6\,J_{2,1,3}J_{2,3,1}-J_{2,0,0}J_{1,1,3}J_{1,3,1})
(J_{3,0,0}J_{1,3,1}-J_{2,0,0}J_{2,3,1})^2))\\
&=-
2^{35}\cdot 3^2\cdot 5^{12}\cdot 7^{14}\cdot 11^2 \frac{\gamma^{20}}{c_1^{80}}\,
\nu(J_{1,4,0}^{12}J_{3,6,0}^2(J_{2,0,0}^3-6J_{3,0,0}^2)^2) \, .
\end{align*}
We have seen in Proposition \ref{nu-images} that
\[
\nu(J_{1,4,0)}=\frac{3 c_1^4}{70}\, \zeta, \quad
\nu(J_{3,6,0})=\frac{c_1^9}{798336\, \gamma^3} E_9, \quad 
\nu(32(J_{2,0,0}^3-6\,J_{3,0,0}^2))=-\rho \frac{c_{1}^{12}}{3^3\,\gamma^{12}}
\]
and this implies
\[
\bigwedge_{k=1}^{5}
\chi_{4,6k-2}=
-\frac{c_1^{10}}{2\, \gamma^{10}}\, \rho^2\,
\zeta^{12}\,E_9^2\, ,
\]
thus proving the lemma.

\end{proof}
We can now conclude the proof of Theorem \ref{Thm-Sigma-4-2}. 
The modular forms $\chi_{4,k}$
with $k\in \{ 4,10,16,22,28\}$ are algebraically independent
over $M(\Gamma)$ because of Lemma \ref{wedge}.
Since they generate a submodule with Hilbert--Poincar\'e 
series equal to that
of $\Sigma_4^2$ the result follows.
\end{proof}

\bigskip
In a similar way one can treat the cases $l=1$ and $l=0$. We intend to come back
to these cases in another paper.

\end{section}
%%%%%%%%%%%%%%%%%%%%%%%%%%%%%%%%%%%%%%%%5

\end{document}